\newcommand {\lni}{\lambda_i^{(n)}}
\newcommand {\lnj}{\lambda_j^{(n)}}
\newcommand{\C}{\mathbf{C}}
\newcommand{\R}{\mathbf{R}}
\newcommand{\pr}{\mathbf{P}}
\newcommand{\ex}{\mathbf{E}}
\newcommand{\E}{\mathbf{E}}
\newcommand{\N}{\mathbf{N}}
\newcommand{\ind}{\mathbf{1}}
\theoremstyle{plain}
\newtheorem{theorem}{Theorem}
\newtheorem{lemma}{Lemma}
\newtheorem{proposition}{Proposition}
\theoremstyle{definition}
\newtheorem{remark}{Remark}
\theoremstyle{remark}
\DeclareMathOperator{\supp}{supp}
\newcommand{\formula}[2][nolabel]
{\ifthenelse{\equal{#1}{nolabel}}
 {\begin{align*} #2 \end{align*}}
 {\ifthenelse{\equal{#1}{}}
  {\begin{align} #2 \end{align}}
  {\begin{align} \label{#1} #2 \end{align}}
 }
}
\numberwithin{equation}{section}
\begin{document}

%
%

\title[Universality classes for general random matrix flows]{Universality classes for general random matrix flows}

\author{Jacek Ma{\l}ecki, Jos\'e Luis P\'erez}

\address{  Jacek Ma{\l}ecki,  \\  Faculty of Pure and Applied Mathematics\\ Wroc{\l}aw University of Science and Technology \\ ul. Wybrze{\.z}e Wyspia{\'n}\-skiego 27 \\ 50-370 Wroc{\l}aw, Poland}
\email{jacek.malecki@pwr.edu.pl }
\address{Jos\'e Luis P\'erez \\ Centro  de  Investigaci\'on  en  Matem\'aticas  A.C.,  Calle  Jalisco  s/n,  CP
36240, Guanajuato, Mexico}
\email{jluis.garmendia@cimat.mx}

\keywords{random matrices, limit theorems, matrix SDE, eigenvalues, empirical measure, symmetric polynomials, free diffusion}
\subjclass[2010]{Primary: 15A52, Secondary: 60H15, 46L54, 60F05}

\thanks{J. Ma{\l}ecki is supported by the Polish National Science Centre (NCN) grant no. 2018/29/B/ST1/02030.}

\begin{abstract} 
We consider matrix-valued processes described as solutions to stochastic differential equations of very general form. We study the family of the empirical measure-valued processes constructed from the corresponding eigenvalues. We show that the family indexed by the size of the matrix is tight under very mild assumptions on the coefficients of the initial SDE. We characterize the limiting distributions of its subsequences as solutions to an integral equation. We use this result to study some universality classes of random matrix flows. These generalize the classical results related to Dyson Brownian motion and squared Bessel particle systems. We study some new phenomenons as the existence of the generalized Marchenko-Pastur distributions supported on the real line. We also introduce universality classes related to generalized geometric matrix Brownian motions and Jacobi processes. Finally we study, under some conditions, the convergence of the empirical measure-valued process of eigenvalues associated to matrix flows to the law of a free diffusion.
\end{abstract}

\maketitle

\section{Introduction}

Since the pioneer work of Eugen Wigner \cite{bib:Wigner1955}, the study of the asymptotic behavior of empirical measures based on the eigenvalues of random matrices has given rise to a significant literature. Apart from random matrix theory, the problem has its importance in free probability, mathematical physics (chaotic quantum physics), principal component analysis in statistics, communication theory and even in number theory. 
In the present article we study this phenomenon from the stochastic point of view and prove the convergence of empirical measure-valued processes related to solutions of a very wide class of matrix SDEs, which, in particular, generalizes the known results for the Dyson Brownian motion and the Wishart processes. More precisely, we consider solutions $X=(X_t)$ to the general matrix valued stochastic differential equations on $\mathcal{H}_n$, the space of Hermitian $n\times n$ matrices, of the form
\begin{eqnarray}
 \label{eq:MSDE:hermitian}
 dX_t=g(X_t)dW_th(X_t)+h(X_t)dW_t^*g(X_t)+b(X_t)dt\/,\quad X_0\in \mathcal{H}_n\/,
\end{eqnarray}
where the continuous functions $g,h,b:\R\to\R$ act spectrally on $X_t$. Here $W=(W_t)$ stands for $n\times n$ complex-valued Brownian motion, i.e. the matrix valued process with entries being independent one-dimensional complex-valued Brownian motions. Continuity of the coefficients ensures the existence of solutions and the symmetric form of the martingale part makes them indeed elements of $\mathcal{H}_n$ (see \cite{bib:gm2013}).
The main goal of the paper is to study the asymptotic behavior of $X$ when $n$ goes to infinity, through the empirical measure-valued process based on the eigenvalues of $X$. As in the classical random matrix setting, it requires suitable normalization of the original process. Thus, we consider $X^{(n)}=(X^{(n)}_t)$, a solution of the scaled SDE of the form 
 \begin{eqnarray*}
 dX_t^{(n)}=g(X_t^{(n)})dW_t^{(n)} h(X_t^{(n)})+h(X_t^{(n)})d(W_t^{(n)})^*g(X_t^{(n)})+\frac{1}{n}b_n(X_t^{(n)})dt\/,\quad X_0\in \mathcal{H}_n\/,
\end{eqnarray*}
where $W^{(n)}_t=\frac{1}{n^{1/2}}W_{t}$ and the drift term $b_n$ depends on $n$. Note that this approach is equivalent to considering the time-scaled process $(X_{t/n})$, where $X$ is a solution to the original \eqref{eq:MSDE:hermitian} with $b$ replaced by $b_n$. The appearance of the drift $n^{-1}b_n(X_t^{(n)})$ is natural in the view of the classical result for the Wishart processes (see \cite{bib:CabanalGuionnet:2001}). However, we can and we do go beyond this and consider
 \begin{eqnarray}
 \label{eq:MSDE:hermitian:scaled}
 dX_t^{(n)}=g_n(X_t^{(n)})dW_t^{(n)} h_n(X_t^{(n)})+h_n(X_t^{(n)})d(W_t^{(n)})^*g_n(X_t^{(n)})+\frac{1}{n}b_n(X_t^{(n)})dt\/,\quad X_0\in \mathcal{H}_n\/,
\end{eqnarray}
where the martingale coefficients also vary with the growth of the dimension $n$. Let us now denote by $\lambda_1^{(n)}\leq \lambda_2^{(n)}\leq \ldots\leq\lambda_n^{(n)}$ the ordered eigenvalues of $X^{(n)}$ given by \eqref{eq:MSDE:hermitian:scaled} and define the corresponding empirical measure-valued process
\begin{eqnarray}
   \label{eq:EM:defn}
   \mu_t^{(n)}(dx) = \dfrac{1}{n}\sum_{i=1}^n \delta_{\lambda_i^{(n)}(t)}(dx)\/,
\end{eqnarray}
where $\delta_a$ is the unit mass at $a\in\R$. We study the convergence of $(\mu_t^{(n)})$, as $n$ goes to infinity, in probability in $\mathcal{C}(\R_{+},\mathrm{Pr}(\R))$, i.e. the space of continuous functions equipped with the topology of uniform convergence on closed intervals, which take values in the space of probability measures $\mathrm{Pr}(\R)$ with the topology of weak convergence. 

Our results cover also the real-valued case, i.e. starting with the following equation
\begin{eqnarray}
\label{eq:MSDE:real}
 dX_t=g(X_t)dB_th(X_t)+h(X_t)dB_t^Tg(X_t)+b(X_t)dt\/,\quad X_0\in \mathcal{S}_n\/,
\end{eqnarray}
where $B$ is real Brownian motion matrix and $B^T$ stands for its transpose, we obtain a solution $X$ in the space of symmetric $n\times n$ matrices $\mathcal{S}_n$. We define the empirical measure process as in \eqref{eq:EM:defn}, where $\lambda_i^{(n)}$ are now eigenvalues of a solution to the scaled SDE of the form
 \begin{eqnarray}
 \label{eq:MSDE:real:scaled}
 dX_t^{(n)}=g_n(X_t^{(n)})dB_t^{(n)} h_n(X_t^{(n)})+h_n(X_t^{(n)})d(B_t^{(n)})^Tg_n(X_t^{(n)})+\frac{1}{n}b_n(X_t^{(n)})dt\/,\quad X_0\in \mathcal{S}_n\/,
\end{eqnarray}
where $B^{(n)}_t=\frac{1}{n^{1/2}}B_{t}$. We will deal with both cases in a unified way by introducing the parameter $\beta$, with the convention that $\beta=1$ in the real-valued case and $\beta=2$ in the complex-valued case.

The main result of the article is the following
\begin{theorem}
	\label{thm:main} 
	Assume that for continuous functions $g_n$, $h_n$ and $b_n$ there exists constant $K>0$ such that
	\begin{eqnarray}
	  \label{eq:growth}
	  g^2_n(x)+h^2_n(x)\leq K(1+|x|)\/,\quad \frac{|b_n(x)|}{n}\leq K(1+|x|)
	\end{eqnarray}
	for every $x\in \R$ and $n\in \N$. If additionally 
	\begin{equation}
	   \label{eq:8momentbound}
	   \sup_{n}\int_{\R}x^8\mu_0^{(n)}(dx)<\infty\/,
	\end{equation}
 then the family of the measure-valued processes $\{(\mu_{t}^{(n)})_{t\geq0}:n\geq1\}$ related to a solution of \eqref{eq:MSDE:hermitian:scaled} or \eqref{eq:MSDE:real:scaled} is tight. If $(\mu_{t})_{t\geq0}$ is the limit of its weakly convergent subsequence in $\mathcal{C}(\R_{+},\mathrm{Pr}(\R))$ and $g_n^2(x)\to g^2(x)$, $h_n^2(x)\to h^2(x)$, $b_n(x)/n\to b(x)$ (locally uniformly on $\R$), then $(\mu_{t})_{t\geq0}$ is the continuous probability-measure valued function satisfying
	\begin{equation} 
	\langle\mu_{t},f\rangle=\langle \mu_0,f\rangle+\int_{0}^{t}ds\int_{\R}b(x)f'(x)\mu_s(dx)+\frac{\beta}{2} \int_{0}^{t}ds\int_{\R^{2}}
	\frac{f^{\prime}(x)-f^{\prime}(y)}{x-y}G(x,y)\mu_{s}(dx)\mu_{s}(dy),
	\label{limit}
	\end{equation}
	for each $t\geq0$, $f\in \mathcal{C}_{b}^{2}(\R)$, where $G(x,y)=g^2(x)h^2(y)+g^2(y)h^2(x)$, $\beta=2$ in the complex-valued case and $\beta=1$ in the real-valued case. 
	
	If \eqref{limit} has the unique solution, then  $\{(\mu_{t}^{(n)})_{t\geq0}:n\geq1\}$ converges to $(\mu_t)_{t\geq 0}$ in $\mathcal{C}(\R_{+},\mathrm{Pr}(\R))$.
\end{theorem}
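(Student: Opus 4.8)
\emph{Proof strategy.} The plan is to derive a semimartingale decomposition of $\langle\mu_t^{(n)},f\rangle$, establish $n$-uniform moment bounds, deduce tightness, and then identify every subsequential limit as a solution of \eqref{limit}.

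\textbf{Step 1: dynamics of the empirical measure.} For $f\in\mathcal{C}^2_b(\R)$ the map $X\mapsto\operatorname{tr}f(X)$ is smooth on $\mathcal{H}_n$ (resp.\ $\mathcal{S}_n$), which bypasses the non-differentiability of individual eigenvalues at collisions; applying the matrix It\^o formula to $\langle\mu_t^{(n)},f\rangle=\tfrac1n\operatorname{tr}f(X_t^{(n)})$, with the Daleckii--Krein formula for the Hessian of $\operatorname{tr}f$ and the martingale structure of \eqref{eq:MSDE:hermitian:scaled}/\eqref{eq:MSDE:real:scaled}, one obtains
\[
\langle\mu_t^{(n)},f\rangle=\langle\mu_0^{(n)},f\rangle+M_t^{(n)}(f)+\int_0^t\Big\langle\mu_s^{(n)},\tfrac{b_n}{n}f'\Big\rangle ds+\frac{\beta}{2}\int_0^t\!\iint_{\R^2}\frac{f'(x)-f'(y)}{x-y}\,G_n(x,y)\,\mu_s^{(n)}(dx)\mu_s^{(n)}(dy)\,ds+\mathcal{E}_t^{(n)},
\]
where $G_n(x,y)=g_n^2(x)h_n^2(y)+g_n^2(y)h_n^2(x)$, the divided difference is extended by $f''(x)$ on the diagonal, $M^{(n)}(f)$ is a continuous local martingale with $\langle M^{(n)}(f)\rangle_t=\tfrac{4}{n^3}\sum_i\int_0^t(f'(\lambda_i^{(n)}))^2 g_n^2(\lambda_i^{(n)})h_n^2(\lambda_i^{(n)})\,ds$, and $\mathcal{E}^{(n)}$ is an error of order $O(1/n)$, uniform on compact time intervals, stemming from the finitely many diagonal terms (and vanishing when $\beta=2$).

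\textbf{Step 2: uniform moment bounds.} From \eqref{eq:growth} one has $g_n^2(x)h_n^2(x)\le CK^2(1+x^2)$ and $G_n(x,y)\le CK^2(1+|x|)(1+|y|)$. Feeding $f(x)=x^{2k}$, $k\in\{1,2,3,4\}$, into the decomposition of Step 1, localizing so that the local martingale part has zero mean, and estimating the resulting drift by these growth bounds together with Jensen's inequality (e.g.\ $\langle\nu,|x|^{j}\rangle\le 1+\langle\nu,x^8\rangle$ for $j\le 8$ and $\langle\nu,|x|^4\rangle^2\le\langle\nu,x^8\rangle$), I would show that $t\mapsto\E\langle\mu_t^{(n)},x^8\rangle$ obeys a Gronwall inequality with $n$-independent coefficients; with \eqref{eq:8momentbound} this yields $\sup_n\sup_{t\le T}\E\langle\mu_t^{(n)},x^8\rangle<\infty$ for every $T$. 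Doob's inequality applied to the (small) martingale part additionally gives $\sup_n\E\sup_{t\le T}\langle\mu_t^{(n)},x^2\rangle<\infty$.

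\textbf{Step 3: tightness.} Compact containment follows from the last bound and Markov's inequality, since $\{\nu:\langle\nu,x^2\rangle\le R\}$ is compact in $\mathrm{Pr}(\R)$. For $f$ ranging over a countable convergence-determining subset of $\mathcal{C}_b^2(\R)$, the decomposition of Step 1 together with Burkholder--Davis--Gundy and the bounds of Step 2 gives $\E|\langle\mu_t^{(n)},f\rangle-\langle\mu_s^{(n)},f\rangle|^4\le C_f|t-s|^2$ uniformly in $n$; here the fourth moment of the increment of the $G_n$-term is controlled through $\langle\nu\otimes\nu,G_n\rangle^4=(2\langle\nu,g_n^2\rangle\langle\nu,h_n^2\rangle)^4\le C(1+\langle\nu,x^8\rangle)$, which is exactly why the eighth-moment hypothesis \eqref{eq:8momentbound} appears. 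Kolmogorov--Chentsov then gives tightness of each $(\langle\mu_\cdot^{(n)},f\rangle)$ in $\mathcal{C}([0,T],\R)$, and combined with compact containment this yields tightness of $\{(\mu_t^{(n)})\}$ in $\mathcal{C}(\R_+,\mathrm{Pr}(\R))$ by the standard criterion for measure-valued processes.

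\textbf{Step 4: identification of the limit and convergence.} Along a subsequence with $(\mu_t^{(n_k)})\Rightarrow(\mu_t)$ I pass to the limit in the identity of Step 1 term by term: $\langle\mu_0^{(n_k)},f\rangle\to\langle\mu_0,f\rangle$ and $\mathcal{E}_t^{(n_k)}\to0$; $\E\sup_{t\le T}|M_t^{(n_k)}(f)|^2\le C_f/n_k^2\to0$ by the explicit quadratic variation and Step 2; and the two drift terms converge because their integrands are continuous and of growth $O(1+|x|)$, resp.\ $O((1+|x|)(1+|y|))$, so the eighth-moment bound supplies the uniform integrability that upgrades the weak convergence $\mu_s^{(n_k)}\to\mu_s$ (resp.\ $\mu_s^{(n_k)}\otimes\mu_s^{(n_k)}\to\mu_s\otimes\mu_s$) to convergence of the spatial integrals, while the merely locally uniform convergences $g_n^2\to g^2$, $h_n^2\to h^2$, $b_n/n\to b$ are handled by splitting each spatial integral into a part over a large compact set (uniform convergence there) and a tail (uniformly small in $n$ by the moment bounds), with dominated convergence taking care of the time integration. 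Hence every subsequential limit satisfies \eqref{limit}, with continuity inherited from $\mathcal{C}(\R_+,\mathrm{Pr}(\R))$. If \eqref{limit} has a unique solution, all subsequential limits coincide with one deterministic $(\mu_t)$, so the tight family $\{(\mu_t^{(n)})\}$ converges in law and, the limit being deterministic, in probability in $\mathcal{C}(\R_+,\mathrm{Pr}(\R))$. The main obstacle is precisely this limit passage in the nonlinear term against coefficients of linear/bilinear growth tested only against bounded functions, which is what \eqref{eq:8momentbound} is designed to overcome; a secondary technical point, circumvented by working with $\operatorname{tr}f(X_t^{(n)})$ rather than with individual eigenvalues, is the rigorous semimartingale decomposition when eigenvalues collide or $g_n,h_n$ vanish.
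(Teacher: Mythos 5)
Your proposal is correct and follows essentially the same architecture as the paper: semimartingale decomposition of $\langle\mu_t^{(n)},f\rangle$, $n$-uniform moment bounds via Gronwall, tightness through the fourth-moment increment estimate $\E|\langle\mu_t^{(n)},f\rangle-\langle\mu_s^{(n)},f\rangle|^4\leq C(t-s)^2$, and term-by-term passage to the limit in the decomposition after splitting spatial integrals into a compact part and a uniformly small tail. The one genuine (but modest) difference is in Step~1: the paper reaches the decomposition by first writing SDEs for the elementary symmetric polynomials $e_k$ and power sums $p_k$ (both polynomial, hence analytic, functions of the matrix entries), deriving their coefficients on the open set where eigenvalues are simple via the known eigenvalue SDE, and then invoking continuity of the resulting drift (the $(\lambda_i-\lambda_j)^{-1}$ terms having cancelled) to extend across collisions; it then obtains the formula for $f(x)=x^k$ and approximates general $f\in\mathcal{C}^2_b$ by polynomials up to a localizing stopping time. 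You propose instead to apply the matrix It\^o formula directly to $X\mapsto\operatorname{tr}f(X)$ using the Daleckii--Krein divided-difference formula for the second derivative. Conceptually these are the same idea — work with a globally smooth function of the matrix rather than with individual eigenvalues — but your route is more direct and avoids the polynomial approximation step, at the price of importing the operator-function differentiability machinery. You are also slightly more explicit than the paper about compact containment (via $\E\sup_{t\le T}\langle\mu_t^{(n)},x^2\rangle<\infty$) as the complement to the increment estimate in the tightness criterion; the paper treats this as subsumed in the cited Ethier--Kurtz criterion. All other steps, including the use of the eighth-moment hypothesis to obtain the factor $(t-s)^2$ and to control the tails in the limit passage, match the paper's proof.
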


\medskip

Here, for any $\mu\in\mathrm{Pr}(\R)$, $\langle\mu,f\rangle = \int fd\mu $ and we understand $(f'(x)-f'(y))/(x-y)$ to be a continuous function on $\R^2$, i.e. it is equal to $f''(x)$ for $x=y$. 

\begin{remark}
   \label{rem:classf}
  Under given regularity of the coefficients, equation \eqref{limit} holds in fact for every $f\in\mathcal{C}^2(\R)$ such that $f$, $f'$ and $f''$ have sub-polynomial growth as it shown in Section \ref{sec:moments}.
\end{remark}

\begin{remark}
   As it is studied in detail in Section \ref{sec:Wishart}, the uniqueness of the solution to \eqref{limit} does not hold in general. Even the Lipschitz continuity of $g$, $h$ and $b$ do not ensure uniqueness. 
\end{remark}

\begin{remark}
Note also that we do not assume any conditions ensuring uniqueness of a solution of \eqref{eq:MSDE:hermitian:scaled}. Since continuity of the coefficients implies existence, we show that any sequence of empirical measure processes is relatively compact. Moreover if the equation \eqref{limit} has a unique solution then the convergence holds for any sequence.
 The sub-linear growth conditions on $g^2_n(x)$, $h^2_n(x)$ and $b_n(x)/n$ ensure that the possible explosion time is infinite a.s. (see Lemma \ref{lem:pk:bound} below). The assumed continuity together with \eqref{eq:growth} seem to be very weak assumptions and thus Theorem \ref{thm:main} covers a very wide range of general SDEs of the form given in \eqref{eq:MSDE:hermitian:scaled}.
\end{remark}

\begin{remark}
   We emphasize that we do not assume that the eigenvalues at the initial point are distinct and, what is more important, we do not impose any conditions on the coefficient to prevent the eigenvalues from colliding after the start. The natural way to study $(\mu_t^{(n)})$ is to write the SDEs for $\lambda_i^{(n)}$ (see \eqref{eq:eigen:SDE} below), which becomes troublesome at the first collision time of the eigenvalues because of the expressions $(\lambda_i^{(n)}-\lambda_j^{(n)})^{-1}$ appearing in the drift terms. Thus non-collision results related to particular models were very often the crucial parts of the proofs and caused some artificial restrictions and assumptions. Note that the eigenvalue process and consequently the empirical measure-valued process are always well-defined whenever $X$ exists. The question about its asymptotic behavior can be asked independently from existence or nonexistence of collisions between eigenvalues. 
Thus the restriction to non-colliding systems has often been the main technical issue and this obstacle is completely removed in our approach.
\end{remark}

We also study the convergence of moments of $(\lambda_1^{(n)},\ldots,\lambda^{(n)}_n)$, i.e. the behavior of
\begin{equation*}
    \frac{1}{n}\ex\left((\lambda_1^{(n)}(t))^m+\ldots+(\lambda^{(n)}_n(t))^m\right) = \int_{\R}x^m\mu_t^{(n)}(dx) \/,
\end{equation*}
as $n$ goes to infinity. We show that some properties of the moments of $\mu_t$ are inherited from the initial distributions $\mu_0^{(n)}$ and $\mu_0$. These results are given in the following theorem and its proof is postponed until Section \ref{sec:moments}. The properties of the moments will be used to show uniqueness of solutions to \eqref{limit} in the study of certain universality classes, i.e. for specific choice of $g$, $h$ and $b$.

\begin{theorem}
  \label{thm:moments}
	Let $(\mu_t^{(n)})_{t\geq 0}$ be a family the empirical measure-valued processes defined for \eqref{eq:MSDE:hermitian:scaled} or \eqref{eq:MSDE:real:scaled} with the continuous coefficients $g_n$, $h_n$ and $b_n/n$ fulfilling \eqref{eq:growth} and such that $g_n^2$, $h_n^2$ and $b_n/n$ are locally uniformly convergent. Let $(\mu_t)_{t\geq 0}$ be a weak limit in $\mathcal{C}(\R_+,\textrm{Pr}(\R))$ of the subsequence $(\mu_t^{(n_i)})_{t\geq 0}$. If 
	\begin{equation}
	   \label{eq:momentbounds:0}
		  \sup_{i} \int_{\R}x^{2k}\mu_0^{(n_i)}(dx)<\infty\/,
	\end{equation}
	for $k\in\N$, then for every $T>0$ we have
	\begin{equation}
	    \label{eq:momentbounds:t}
        \sup_{t\leq T}\int_{\R}x^{2k}\mu_t(dx)<\infty\/,
	\end{equation}
	and  
	\begin{equation*}
	   \int_{\R} x^m \mu^{(n_i)}_t(dx) \to \int_{\R}x^m\mu_t(dx)\/,\quad i\to\infty\/,
	\end{equation*}
	for every $m=0,1,\ldots,2k-1$. 
	
	If $(\mu_t)_{t\geq 0}$ is a solution to \eqref{limit} with $g^2(x)+h^2(x)\leq K(1+|x|)$ and $b$ is bounded on $[0,\infty)$, non-negative on $(-\infty,0)$ and $\mu_0$ has a characteristic function, which is analytic on a neighborhood of the origin, then, for every $t>0$, the characteristic function of $\mu_t$ is analytic on a neighborhood of the origin and, in particular, it is uniquely determined by its moments.
\end{theorem}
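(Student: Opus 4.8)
The plan is to prove that, for every $T>0$, there are constants $C,D,c$ \emph{not depending on $k$} such that the even moments $m_{2k}(t):=\int_{\R}x^{2k}\mu_t(dx)$ satisfy $m_{2k}(t)\le C\,D^{2k}\,(2k)!\,e^{ckt}$ for all $k\ge0$ and $t\le T$. Taking $2k$-th roots this gives $\limsup_k\bigl(m_{2k}(t)/(2k)!\bigr)^{1/2k}<\infty$, hence a finite exponential moment $\int_{\R}e^{\eps|x|}\mu_t(dx)<\infty$ for every $\eps<e^{-cT}/D$ (sum the series $\sum_k\eps^k\tilde m_k(t)/k!$, $\tilde m_p:=\int_{\R}|x|^p\mu_t(dx)$, after $\tilde m_p\le 1+m_{2\lceil p/2\rceil}$). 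Such a finite exponential moment forces $\widehat{\mu_t}$ to extend analytically to a strip around $\R$ — in particular to be analytic near the origin — and makes $\mu_t$ determined by its moments, which are precisely the Taylor coefficients of $\widehat{\mu_t}$ at $0$. That $\mu_t$ has finite moments of every order, and that \eqref{limit} may be applied to $f(x)=x^{m}$, comes from the a priori moment estimates of Section~\ref{sec:moments} (cf.\ the first part of this theorem) together with Remark~\ref{rem:classf}, since $\mu_0$ with an analytic characteristic function has finite moments of all orders; analyticity of $\widehat{\mu_0}$ at $0$ also supplies the starting bound $m_{2k}(0)\le C_\ast A^{2k}(2k)!$.

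The estimate itself is obtained by inserting $f(x)=x^{2k}$ into \eqref{limit}, so that $f'(x)=2kx^{2k-1}$ and $\bigl(f'(x)-f'(y)\bigr)/(x-y)=2k\sum_{j=0}^{2k-2}x^jy^{2k-2-j}$. The hypotheses on $b$ enter here in an essential way: since $b$ is bounded on $[0,\infty)$, say $|b|\le\kappa$ there, and $b\ge 0$ on $(-\infty,0)$, while $x^{2k-1}$ has the sign of $x$, the drift term admits the one-sided bound
\begin{equation*}
 \int_{\R}b(x)\,2kx^{2k-1}\,\mu_s(dx)\ \le\ 2k\kappa\int_{[0,\infty)}x^{2k-1}\,\mu_s(dx)\ \le\ 2k\kappa\,\tilde m_{2k-1}(s).
\end{equation*}
This one-sidedness cannot be dispensed with: for a general (or even a merely sub-linear) drift on $[0,\infty)$ the even moments can grow faster than any $D^{2k}(2k)!$, so that the conclusion fails; this is precisely why the geometric-type flows, whose drift is linear and hence unbounded on $[0,\infty)$, are not covered by this statement. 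For the martingale term we have $G(x,y)\le 2K^2(1+|x|)(1+|y|)$ by the growth bound on $g^2+h^2$, whence it is dominated by $C_\beta\, k\sum_{j=0}^{2k-2}\bigl(\tilde m_j(s)+\tilde m_{j+1}(s)\bigr)\bigl(\tilde m_{2k-2-j}(s)+\tilde m_{2k-1-j}(s)\bigr)$. Altogether \eqref{limit} yields an integral inequality of the form $m_{2k}(t)\le m_{2k}(0)+\int_0^t\bigl[c_1k\,m_{2k}(s)+Q_k(s)\bigr]\,ds$, where $Q_k(s)$ is a short sum of products $\tilde m_a(s)\tilde m_b(s)$ with $a+b\le 2k$ and, apart from finitely many edge terms, $a,b\le 2k-2$.

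I would then run an induction on $k$ with the ansatz $m_{2k}(t)\le C_0 D^{2k}(2k)!\,e^{ckt}$ on $[0,T]$, choosing the constants at the very end. Three elementary facts make it close: the log-convexity $\tilde m_p^2\le\tilde m_{p-1}\tilde m_{p+1}$, which controls odd-order moments by neighbouring even ones with no loss; the identity $\sum_{j=0}^{n}j!\,(n-j)!=n!\sum_{j=0}^{n}\binom{n}{j}^{-1}\le 3\,n!$, which shows that the $k$-fold sum $Q_k$ is comparable to a \emph{single} factorial-moment at level $2k$ rather than to $k$ of them; and the observation that the finitely many edge terms of $Q_k$ — those in which an index reaches $2k-1$ — contribute, after $\tilde m_{2k-1}\le\tfrac12(\tilde m_{2k-2}+m_{2k})$, at most $O(k)\,m_{2k}(s)$ to the right-hand side, which a Gronwall step absorbs at the cost of the factor $e^{ckt}$. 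Balancing $C_0$ against $C_\ast$, $D$ against $A$, and $c$ against $K,\kappa,\beta$ and $T$ closes the recursion; feeding the bound back gives the finite exponential moment, the analyticity of $\widehat{\mu_t}$ near the origin and the $M$-determinacy.

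The delicate point — and the real work — is the $k$-uniformity in the induction. Two spurious factors of $k$ threaten to appear: the prefactor $2k$ from $f'=2kx^{2k-1}$, and the length $2k-1$ of the sum over $j$. They must be absorbed against, respectively, the gain $(2k)!/(2k-2)!\asymp k^2$ obtained in passing from level $2k-2$ to level $2k$, and the decay of the reciprocal-binomial sum, which is $O(1)$ in general and $O(1/k)$ when the range of summation stays away from the endpoints $0$ and $n$. The sub-linear — rather than bounded — growth of $g^2$ and $h^2$ is a genuine nuisance, since it shifts the moment indices appearing in $G$ upward by one unit on each side, moving some contributions to ``level $2k$'' instead of ``level $2k-2$''; one absorbs this either via the log-convexity interpolation of the odd-order moments or by accepting a larger constant $c$ in the Gronwall exponent.
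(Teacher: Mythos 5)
Your argument addresses only the last claim of the theorem (analyticity of $\widehat{\mu_t}$ near the origin and moment determinacy). The first two claims — the uniform bound $\sup_{t\leq T}\int x^{2k}\mu_t(dx)<\infty$ and the convergence $\int x^m\mu_t^{(n_i)}(dx)\to\int x^m\mu_t(dx)$ for $m\leq 2k-1$ — are either cited back to the paper or not touched at all, so as a proof of the full statement this is incomplete. (The moment-convergence part of the paper, for instance, hinges on the elementary truncation argument with a cut-off $\alpha$ chosen so that $\mu_t(\{\alpha\})=0$ plus the uniform $2k$-th moment bound; none of that appears here.)

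For the last claim, your outline is essentially sound but takes a genuinely harder route than the paper, and the source of the extra difficulty is the choice of test function. You work with $f(x)=x^{2k}$, so the double integral $\int\int\frac{f'(x)-f'(y)}{x-y}G(x,y)\mu_s\mu_s$ throws up absolute moments $\tilde m_a\tilde m_b$ with $a\vee b$ reaching $2k-1$ after $G$ shifts the indices by one; you then need log-convexity $\tilde m_{2k-1}\le\frac12(\tilde m_{2k-2}+m_{2k})$ to push the edge terms onto $m_{2k}$, a Gronwall step producing the $k$-dependent exponent $e^{ckt}$, and a delicate balance of $C_0,D,c$. The paper instead takes $f_k(x)=|x|^k$ (index $k$, not $2k$). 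With $|f_k'(x)-f_k'(y)|/|x-y|\le k\sum_{i=0}^{k-2}|x|^i|y|^{k-2-i}$, the sub-linear growth of $g^2,h^2$ shifts indices only up to $k-1$, so the right-hand side of the inequality for $a_k(t)=\int|x|^k\mu_t$ involves only $a_0,\dots,a_{k-1}$: no Gronwall is needed, the induction closes directly, and one gets the cleaner bound $\sup_{t\le T}a_k(t)\le(k-1)!\,C^{2k-1}$ with a single constant $C=C(T,K)$ and no exponential factor. Both routes lead to analyticity near the origin, but the paper's choice of test function makes the two ``spurious factors of $k$'' you flag disappear without the interpolation and Gronwall machinery.

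One smaller point: you invoke ``the a priori moment estimates of Section~\ref{sec:moments} (cf.\ the first part of this theorem)'' to justify that $\mu_t$ has finite moments, but the first part of the theorem establishes this only for limits of the empirical processes, while the last claim is stated for an arbitrary solution of \eqref{limit}. The paper has the same implicit gap (it also quotes $\tilde C_{2,T}$ from the empirical-limit bound), so this is not a defect relative to the paper, but if you want a self-contained proof of the last claim you should derive finiteness of the moments of $\mu_t$ directly from \eqref{limit} rather than by reference.
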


\medskip

We use these results to study some universality classes for random matrix flows. We consider four main examples, in which Theorem \ref{thm:main} can be applied. The first is introduced in Section \ref{sec:Wigner} and it relates to the Wigner ensemble, where we consider a very wide class of matrix flows, which leads to the family of Wigner's semi-circle laws $(\rho_t^{sc})$ as the limit of the corresponding empirical measure-valued processes. The result is given in Theorems \ref{thm:Wigner1} and \ref{thm:Wigner2}, which are generalizations of Rogers and Shi's result from \cite{bib:RogersShi:1993}.  Then, in Section \ref{sec:Wishart}, we consider the generalized Laguerre/Wishart processes and the corresponding integral equation \eqref{limit}. We show that the uniqueness of solutions does not hold in this case by introducing new families of measures, which can be considered as the generalized Marchenko-Pastur distributions. Although such distributions can be obtained as limits of the empirical measures associated to certain matrix flows, we provide in Theorem \ref{thm:Wishart:class} additional conditions on the coefficients of \eqref{eq:MSDE:hermitian} to ensure convergence of the empirical measure-valued processes to the family of the Marchenko-Pastur distributions $(\mu_t^{MP})$. This significantly generalizes the result of Cabanal Duvillard and Guionnet from \cite{bib:CabanalGuionnet:2001}. In Section \ref{sec:GMBM} we consider the matrix-analogue of the geometric Brownian motion. We show the convergence and characterize the moments of the family of limiting distributions $(\mu_t^{geo})$ in Theorem \ref{thm:GMBM:1}. Then we introduce the general result in Theorem \ref{thm:GMBM:class}, where we provide a wide class of solutions to SDE's of the form \eqref{eq:MSDE:hermitian:scaled}, which leads to  $(\mu_t^{geo})$. Finally, in Section \ref{sec:Jacobi}, we consider the convergence of the empirical measure-valued processes for Jacobi processes and introduce the universality class associated to the family of limiting distributions $(\mu_t^{Jac})$. Although all the results are stated for measure-valued processes, by fixing the time variable $t$, one can obtain the weak convergence of the empirical measures for random matrices.  

\medskip

The relation between our results and free probability is studied in Section \ref{sec:Free}. We start by introducing the notion of a free diffusion, which is the solution to a free stochastic differential equation of the form:
\begin{align*}
dX_t=g(x_t)dZ_th(X_t)+h(X_t)dZ^*_tg(X_t)+b(X_t)dt,\qquad\text{$X_0\in\mathcal{A}$,}
\end{align*}
where $Z$ is a complex free Brownian motion, $g,h,b:\R\to\R$ act spectrally on $X$ and are assumed to be locally operator Lipschtiz continuous. We then study the convergence of the empirical measured-valued process of eigenvalues associated to matrix flows to the law of the free diffusion $(X_t)$. This is done by showing that the Cauchy transforms of the law of the free diffusion $(X_t)$ and the limit of the empirical measured-valued process of the associated matrix flows, given in Theorem \ref{thm:main}, satisfy the same differential equation. Hence, if there is uniqueness to this differential equation, then the limiting law of the sequence of empirical measure processes associated to random matrix flows and the law of the free diffusion must be the same. We finally illustrate this results by providing some examples in which we can show this convergence: the free linear Brownian motion, and the free Ornstein-Uhlenbeck process.
\section{Symmetric polynomials}
\label{sec:sympol}
\subsection{Preliminaries}
Recall that we denote by $\mathcal{S}_n$ the space of real symmetric $n\times n$ matrices and $\mathcal{H}_n$ stands for Hermitian $n\times n$ matrices. In both cases, for a given function $g:\R\to\R$ and a matrix $X$, we write $g(X)$ for the spectral action of $g$ on $X$, i.e.  
\begin{eqnarray*}
g(X)=H\text{diag}[g(\lambda_1),\dots,g(\lambda_n)]H^T,
\end{eqnarray*}
where $X=H\Lambda H^T$ is a diagonalization of $X$ with an orthogonal matrix $H$ and an eigenvalue matrix $\Lambda=\text{diag}[\lambda_1,\dots,\lambda_n]$. For a given symmetric or Hermitian matrix $X$ we define the related elementary symmetric polynomials
\begin{eqnarray}
\label{eq:poly:defn}
e_k = e_k(X) := \sum_{i_1<\ldots<i_k}\lambda_{i_1}\cdot\ldots\cdot \lambda_{i_k}\/,\quad i=1,\ldots,n\/,
\end{eqnarray}
in the eigenvalues $\lambda_{1},\ldots,\lambda_{n}$ of the matrix $X$. We also use the convention that $e_0\equiv 1$. The elementary symmetric polynomials are, up to the sign change, the coefficients of the characteristic polynomial of $X$
\begin{eqnarray*}
  \det (X-u I) = \sum_{k=0}^n (-1)^{n-k}u^{n-k}e_k(X)\/.
\end{eqnarray*}
In particular, $e_n(X)$ are polynomial functions of the entries of $X$. By the fundamental theorem of symmetric polynomials we get that every symmetric polynomial in $\lambda_1,\ldots,\lambda_n$ has a unique representation as a polynomial of $e_1,\ldots,e_n$.

We will also use the notation $e_n^{\overline{\lambda_i}}$ for the incomplete polynomial of order $n$, not containing the variable $\lambda_i$. The notation $e_n^{\overline{\lambda_i}, \overline{\lambda_j}}$ is analogous, i.e. it stands for the polynomial of degree $n$ which does not contain $\lambda_i$ and $\lambda_j$. Finally we set $e_0^{\overline{\lambda_i}}\equiv 1$ and $e_{-1}^{\overline{\lambda_i}, \overline{\lambda_j}}=0$. For every $n\in\N$ and $k=1,2,\ldots$ we write
\begin{eqnarray*}
  p_k = \sum_{i=1}^n\lambda_i^k
\end{eqnarray*}
for the power sum symmetric polynomials in the eigenvalues. Note that $p_k$ are related to $e_k$ by the following recurrence relation
\begin{eqnarray}
  \label{eq:pk:en}
  p_k=\sum_{i=1}^{k-1} (-1)^{i-1}e_ip_{k-i}+(-1)^{k-1}ke_k\/.
\end{eqnarray}

\subsection{Symmetric polynomials as continuous semimartingales}
In this section we assume that $X$ is a solution to \eqref{eq:MSDE:hermitian} or \eqref{eq:MSDE:real}. The basic symmetric polynomials related to $X$ are smooth functions of the coefficients and, by It\^o's formula, they are continuous semimartingales. Since $p_k$ are polynomial functions of $e_1,\ldots,e_n$, the same statement is true for $p_k$ and in fact for every other symmetric polynomial. In the next two lemmas we provide the semi-martingale description of $e_i$ and $p_k$. Note once again that $\lambda_1,\ldots,\lambda_n$ are continuous functions of $e_1,\ldots,e_n$ and also the entries of $X$ and we do not claim that they are semimartingales.

\begin{lemma}
\label{lem:sympol:SDE}
  Let $X$ be a solution to \eqref{eq:MSDE:hermitian} or \eqref{eq:MSDE:real}. Then $e_1,\ldots,e_n$ are semimartingales given by 
	\begin{eqnarray}
	  \label{eq:sympol:SDE}
	  de_k = \sum_{i=1}^n 2g(\lambda_i)h(\lambda_i)e_{k-1}^{\overline{\lambda_i}}d\nu_i + \left(\sum_{i=1}^n b(\lambda_i)e_{k-1}^{\overline{\lambda_i}}-\beta\sum_{i<j}G(\lambda_i,\lambda_j)e_{k-2}^{\overline{\lambda_i},\overline{\lambda_j}}\right)dt\/,
	\end{eqnarray}
	where $\nu_1,\ldots,\nu_n$ are independent Brownian motions and $k=1,\ldots,n$.
\end{lemma}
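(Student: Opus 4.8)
The plan is to apply It\^o's formula to the functions $e_k$ viewed as smooth functions of the entries of $X_t$, and then to re-express everything intrinsically in terms of the eigenvalues $\lambda_1,\ldots,\lambda_n$ and the incomplete elementary symmetric polynomials. First I would recall the classical formulas for the first and second derivatives of a symmetric function of a matrix argument: if $F(X)=\sum_i \phi(\lambda_i)$ or, more generally, $F$ is a symmetric polynomial of the eigenvalues, then at a point with simple spectrum the gradient and Hessian of $F$ with respect to the matrix entries can be written through the eigen-decomposition $X=H\Lambda H^T$. For $e_k$ one uses the key identity $\partial e_k/\partial \lambda_i = e_{k-1}^{\overline{\lambda_i}}$ and the second-order identity $\partial^2 e_k/\partial \lambda_i\partial\lambda_j = e_{k-2}^{\overline{\lambda_i},\overline{\lambda_j}}$ for $i\neq j$ (and $0$ on the diagonal, since $e_k$ is multilinear in the $\lambda$'s). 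These combine with the standard perturbation formulas for eigenvalues of a symmetric/Hermitian matrix to give the Euclidean gradient and Hessian of $e_k$ as a function of the matrix entries.

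Next I would plug the SDE \eqref{eq:MSDE:hermitian} or \eqref{eq:MSDE:real} into It\^o's formula. The martingale part: because $g,h$ act spectrally, in the eigenbasis $H$ the driving term $g(X_t)\,dW_t\,h(X_t)+h(X_t)\,dW_t^*\,g(X_t)$ has $(i,i)$ entry equal to $2g(\lambda_i)h(\lambda_i)$ times a real Brownian increment (after diagonalizing, the diagonal of $\tilde W:=H^*dW H$ consists of independent complex Brownian motions whose real parts, suitably normalized, yield the $\nu_i$), while the off-diagonal entries contribute only to the quadratic variation. Pairing the gradient $\partial e_k/\partial X = H\,\mathrm{diag}(e_{k-1}^{\overline{\lambda_i}})\,H^T$ against this, only the diagonal survives in the $dW$ term, producing $\sum_i 2g(\lambda_i)h(\lambda_i)e_{k-1}^{\overline{\lambda_i}}\,d\nu_i$. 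Here I should check that the resulting one-dimensional real martingales are independent Brownian motions: compute their mutual brackets from the covariance structure of the entries of $W$ and confirm orthogonality and unit speed (this uses $\beta$ implicitly through the real/complex distinction but the $\nu_i$ themselves end up standard).

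The drift then has two sources: the genuine drift $b(X_t)\,dt$, which contributes $\sum_i b(\lambda_i)e_{k-1}^{\overline{\lambda_i}}\,dt$ by the same gradient pairing (the diagonal of $H^T b(X_t)H$ is $b(\lambda_i)$); and the It\^o correction $\frac12\sum \partial^2 e_k/\partial X_{ab}\partial X_{cd}\,d\langle X_{ab},X_{cd}\rangle$. The latter is where the off-diagonal structure and the factor $\beta$ enter. Using the Hessian formula and the quadratic covariation of the entries of the martingale part — which, after diagonalizing, couples the $(i,j)$ and $(j,i)$ entries with intensity proportional to $G(\lambda_i,\lambda_j)=g^2(\lambda_i)h^2(\lambda_j)+g^2(\lambda_j)h^2(\lambda_i)$, with a combinatorial factor $\beta$ accounting for real vs. complex entries — one obtains $-\beta\sum_{i<j}G(\lambda_i,\lambda_j)e_{k-2}^{\overline{\lambda_i},\overline{\lambda_j}}\,dt$. (The minus sign and the appearance of $e_{k-2}^{\overline{\lambda_i},\overline{\lambda_j}}$ come from the second-order eigenvalue perturbation terms $\sum_{i\neq j}(\lambda_i-\lambda_j)^{-1}$ combining with $\partial e_k/\partial\lambda_i - \partial e_k/\partial\lambda_j = e_{k-1}^{\overline{\lambda_i}}-e_{k-1}^{\overline{\lambda_j}} = (\lambda_j-\lambda_i)e_{k-2}^{\overline{\lambda_i},\overline{\lambda_j}}$, so the singular denominators cancel exactly — this cancellation is precisely why $e_k$, unlike $\lambda_i$, is a genuine semimartingale.)

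The main obstacle is the handling of the set where the spectrum is degenerate, since the eigen-decomposition and the perturbation formulas are a priori only valid for simple spectrum. I would resolve this by noting that $e_k$ is a globally smooth (polynomial) function of the matrix entries, so It\^o's formula applies unconditionally; the eigenvalue-based expressions for the gradient and Hessian extend by continuity from the (dense) simple-spectrum locus to all of $\mathcal{S}_n$ or $\mathcal{H}_n$ because both sides are continuous symmetric functions of $\lambda_1,\ldots,\lambda_n$, and the identities $\partial e_k/\partial\lambda_i = e_{k-1}^{\overline{\lambda_i}}$, etc., hold as polynomial identities. Likewise the cancellation $e_{k-1}^{\overline{\lambda_i}}-e_{k-1}^{\overline{\lambda_j}}=(\lambda_j-\lambda_i)e_{k-2}^{\overline{\lambda_i},\overline{\lambda_j}}$ is a polynomial identity valid even when $\lambda_i=\lambda_j$, so the final drift is continuous and the formula \eqref{eq:sympol:SDE} is valid on all of time $[0,\infty)$ up to the explosion time, with no non-collision hypotheses needed. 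A secondary technical point is the rigorous identification of the $n$ real martingales $\nu_i$ as independent Brownian motions via L\'evy's characterization, which I would carry out by a direct bracket computation.
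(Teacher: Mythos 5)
Your proposal is correct and follows essentially the same route as the paper: both observe that $e_k$ is a polynomial in the matrix entries (hence a semimartingale by It\^o's formula unconditionally), compute its semimartingale decomposition on the open dense set of simple spectrum — where the paper invokes the eigenvalue SDE \eqref{eq:eigen:SDE} and the chain rule from earlier references, and you work through the equivalent perturbation/Hessian formulas directly, including the key cancellation $e_{k-1}^{\overline{\lambda_i}}-e_{k-1}^{\overline{\lambda_j}}=(\lambda_j-\lambda_i)e_{k-2}^{\overline{\lambda_i},\overline{\lambda_j}}$ that removes the singular factors $(\lambda_i-\lambda_j)^{-1}$ — and finish by a continuity argument extending \eqref{eq:sympol:SDE} to all of $\mathcal{S}_n$ or $\mathcal{H}_n$. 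Your version is more self-contained in spelling out the computation and the identification of the $\nu_i$ as independent Brownian motions, which the paper outsources to citations, but the underlying strategy is identical.
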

\begin{proof}
  The proof is basically the same as in Proposition 2.1 in \cite{bib:gmm2018}, where the special case of Wishart processes was considered. For the convenience of the reader we repeat this argument here. As we have mentioned, $e_1,\ldots,e_n$ are semimartingales, since they are given by an analytic mapping from $\mathcal{S}_n$ (or $\mathcal{H}_n$) to $\R^n$
	\begin{eqnarray*}
	  X\longrightarrow (e_1(X),\ldots,e_n(X))
	\end{eqnarray*}
	and let us denote this function as $F$. Whenever $\lambda_1(0)<\ldots<\lambda_n(0)$, i.e. there are no collisions at the starting point, and it was shown in Theorem 3 and Theorem 4 in \cite{bib:gm2013} that
	\begin{eqnarray}
	   \label{eq:eigen:SDE}
		  d\lambda_i = 2g(\lambda_i)h(\lambda_i)d\nu_i + b(\lambda_i)dt+ \beta\sum_{i\neq j} \dfrac{G(\lambda_i,\lambda_j)}{\lambda_i-\lambda_j}dt
	\end{eqnarray}
	up to the first collision time $T_c$. As it was calculated in the proof of Proposition 3.1 in \cite{bib:gm2014}, one can easily obtain 
	\eqref{eq:sympol:SDE} from \eqref{eq:eigen:SDE} in this case. To finish the proof and conclude that \eqref{eq:sympol:SDE} holds without any assumption on collisions between eigenvalues note that It\^o theorem states that the semimartingale representation of $e_1,\ldots,e_n$ is given in terms of the derivatives of the smooth function $F$. Moreover, we have just find these derivatives on the open set $\{X: \lambda_i(X)\neq \lambda_j(X), \textrm{for all} \ i\neq j, i,j=1,\ldots,n\}$. Since the exploding terms $(\lambda_i-\lambda_j)^{-1}$ are no longer present in \eqref{eq:sympol:SDE}, we can end the proof by a continuity argument. 
	\end{proof}
	
\begin{lemma}
\label{lem:pk:SDE}
  Let $X$ be a solution to \eqref{eq:MSDE:hermitian} or \eqref{eq:MSDE:real}. Then $p_k$ is a semimartingale, for every $k=1,2,\ldots$, described by
	\begin{eqnarray}
	  \label{eq:pk:SDE}
	dp_k &=& 2k\sum_{i=1}^n \lambda_i^{k-1}g(\lambda_i)h(\lambda_i)d\nu_i+2k(k-1)\sum_{i=1}^n \lambda_i^{k-2}g^2(\lambda_i)h^2(\lambda_i)dt\\
	&&+k\sum_{i=1}^n\lambda_i^{k-1}b(\lambda_i)dt+ \beta\,k\sum_{i<j}\left(\sum_{l=0}^{k-2}\lambda_i^{l}\lambda_j^{k-2-l}\right) G(\lambda_i,\lambda_j)dt\/,
	\end{eqnarray}
	where $\nu_i$, $i=1,\ldots,n$ is a collection of independent one-dimensional Brownian motions.
\end{lemma}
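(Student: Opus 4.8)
The plan is to follow the same two-stage strategy used to prove Lemma~\ref{lem:sympol:SDE}: first establish \eqref{eq:pk:SDE} on the open, dense set of matrices with simple spectrum, where the eigenvalue dynamics \eqref{eq:eigen:SDE} are available, and then extend it to arbitrary starting points and all times by a continuity argument. So assume first that $\lambda_1(0)<\ldots<\lambda_n(0)$. By Theorems~3 and~4 of \cite{bib:gm2013}, up to the first collision time $T_c$ the ordered eigenvalues satisfy \eqref{eq:eigen:SDE} with independent one-dimensional Brownian motions $\nu_1,\ldots,\nu_n$, hence $d\langle\lambda_i\rangle_t=4g^2(\lambda_i)h^2(\lambda_i)\,dt$ and $d\langle\lambda_i,\lambda_j\rangle_t=0$ for $i\neq j$. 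Applying It\^o's formula to $p_k=\sum_{i=1}^n\lambda_i^k$ then gives, for $t<T_c$, the martingale term $2k\sum_i\lambda_i^{k-1}g(\lambda_i)h(\lambda_i)\,d\nu_i$, the second-order term $2k(k-1)\sum_i\lambda_i^{k-2}g^2(\lambda_i)h^2(\lambda_i)\,dt$, the drift $k\sum_i\lambda_i^{k-1}b(\lambda_i)\,dt$ coming from the $b$-term of \eqref{eq:eigen:SDE}, and the interaction drift $\beta k\sum_i\sum_{j\neq i}\frac{\lambda_i^{k-1}}{\lambda_i-\lambda_j}\,G(\lambda_i,\lambda_j)\,dt$.

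The one algebraic step is to rewrite this interaction drift in the symmetric form of \eqref{eq:pk:SDE}. Since $G$ is symmetric, I would pair the $(i,j)$ and $(j,i)$ summands and use
\begin{align*}
\frac{\lambda_i^{k-1}}{\lambda_i-\lambda_j}+\frac{\lambda_j^{k-1}}{\lambda_j-\lambda_i}=\frac{\lambda_i^{k-1}-\lambda_j^{k-1}}{\lambda_i-\lambda_j}=\sum_{l=0}^{k-2}\lambda_i^{l}\lambda_j^{k-2-l},
\end{align*}
which transforms $\sum_i\sum_{j\neq i}\frac{\lambda_i^{k-1}}{\lambda_i-\lambda_j}G(\lambda_i,\lambda_j)$ into $\sum_{i<j}\bigl(\sum_{l=0}^{k-2}\lambda_i^{l}\lambda_j^{k-2-l}\bigr)G(\lambda_i,\lambda_j)$ and so yields \eqref{eq:pk:SDE} on $\{t<T_c\}$. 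The crucial gain is that the singular factors $(\lambda_i-\lambda_j)^{-1}$ have disappeared, so the right-hand side of \eqref{eq:pk:SDE} is a polynomial in $\lambda_1,\ldots,\lambda_n$ composed with the continuous maps $g,h,b$ and extends continuously through the collision locus. To drop the non-collision hypothesis I would then argue as at the end of the proof of Lemma~\ref{lem:sympol:SDE}: by Newton's identity \eqref{eq:pk:en}, $p_k$ is a polynomial in $e_1,\ldots,e_n$, hence a polynomial — in particular real-analytic — function $X\mapsto p_k(X)$ of the entries of $X$; It\^o's theorem applied to this function and to \eqref{eq:MSDE:hermitian} or \eqref{eq:MSDE:real} shows $p_k$ is a semimartingale whose characteristics are determined by the derivatives of $X\mapsto p_k(X)$; these derivatives have just been identified on $\{X:\lambda_i(X)\neq\lambda_j(X)\text{ for all }i\neq j\}$, and since the resulting expression \eqref{eq:pk:SDE} carries no singular terms, it persists on all of $\mathcal{S}_n$ (or $\mathcal{H}_n$) and for every $t\geq0$ by continuity.

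I expect the only genuinely delicate point to be this last globalization, exactly as for Lemma~\ref{lem:sympol:SDE}: once collisions are permitted the eigenvalues $\lambda_i$ need not be semimartingales, so It\^o's formula cannot be applied to $\sum\lambda_i^k$ directly, and it is precisely the cancellation in the second step — which makes the drift of $p_k$ free of $(\lambda_i-\lambda_j)^{-1}$ terms, unlike the drift of an individual $\lambda_i$ — together with the smoothness of $X\mapsto p_k(X)$ that legitimizes the continuity extension. An alternative route, avoiding \eqref{eq:eigen:SDE} altogether, is to deduce \eqref{eq:pk:SDE} from \eqref{eq:sympol:SDE} by induction on $k$ through \eqref{eq:pk:en}, reading off the brackets $d\langle e_i,e_j\rangle_t=4\sum_m g^2(\lambda_m)h^2(\lambda_m)\,e_{i-1}^{\overline{\lambda_m}}e_{j-1}^{\overline{\lambda_m}}\,dt$ from the martingale parts in \eqref{eq:sympol:SDE} and simplifying the ensuing symmetric-polynomial identities; this is correct but computationally heavier, so I would keep the eigenvalue argument as the main line.
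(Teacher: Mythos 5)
Your proposal is correct and follows the paper's own proof: apply It\^o's formula to $p_k$ using the eigenvalue SDE \eqref{eq:eigen:SDE} on the non-collision set, symmetrize the interaction drift via $\frac{\lambda_i^{k-1}-\lambda_j^{k-1}}{\lambda_i-\lambda_j}G(\lambda_i,\lambda_j)$ to eliminate the singular factors, and then invoke the same continuity argument as in Lemma~\ref{lem:sympol:SDE}. You even flag the alternative route through \eqref{eq:sympol:SDE} and Newton's identity \eqref{eq:pk:en}, which the paper mentions and likewise dismisses as computationally heavier.
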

\begin{proof}
   This result can  be obtained directly from \eqref{eq:sympol:SDE} by using \eqref{eq:pk:en}, which requires more involved calculations. The other way is to find \eqref{eq:pk:SDE} using \eqref{eq:eigen:SDE} and assuming that there are no collision at the starting point and $t<T_c$. Note that the symmetry of $p_k$ and the symmetry of $G(x,y)$ will make the problematic term $(\lambda_i-\lambda_j)^{-1}$ disappear in the following way
	\begin{eqnarray*}
	   \sum_{i=1}^n \lambda_i^{k-1}\sum_{j\neq i} \frac{G(\lambda_i,\lambda_j)}{\lambda_i-\lambda_j} = \sum_{i<j}\frac{\lambda_i^{k-1}-\lambda_j^{k-1}}{\lambda_i-\lambda_j}\,G(\lambda_i,\lambda_j) = \sum_{i<j}\left(\sum_{l=0}^{k-2}\lambda_i^{l}\lambda_j^{k-2-l}\right) G(\lambda_i,\lambda_j)\/.
	\end{eqnarray*}
	Then the same continuity argument as in the proof of Lemma \ref{lem:sympol:SDE} ends the proof.
\end{proof}

\begin{remark}
\label{rem:pkn:SDE}
The results presented in Lemmas \ref{lem:sympol:SDE} and \ref{lem:pk:SDE} can be easily translated for the scaled processes and related symmetric polynomials and power sums by simple scaling of the coefficients. More precisely, we should replace $g$, $h$ and $b$ in the given semimartingale representations by $g_n/n^{1/4}$, $h_n/n^{1/4}$ and $b_n/n$ respectively. In particular, the power sums polynomials $p_k^{(n)}$ related to the solutions of scaled SDEs \eqref{eq:MSDE:hermitian:scaled} or \eqref{eq:MSDE:real:scaled} are described as
	\begin{eqnarray*}
	   dp_k^{(n)} &=& \frac{2k}{n^{1/2}}\sum_{i=1}^n (\lni)^{k-1}g_n(\lni)h_n(\lni)d\nu_i+\frac{2k(k-1)}{n}\sum_{i=1}^n (\lni)^{k-2}g^2_n(\lni)h^2_n(\lni)dt\\
	&&+\frac{k}{n}\sum_{i=1}^n(\lni)^{k-1}b_n(\lni)dt+ \beta\,\frac{k}{n}\sum_{i<j}\left(\sum_{l=0}^{k-2}(\lni)^{l}(\lnj)^{k-2-l}\right) G_n(\lni,\lnj)dt\/,
	\end{eqnarray*}
for given $k\in\N$, where $G_n(x,y)=g_n^2(x)h_n^2(y)+g_n^2(y)h_n^2(x)$. Note that the Brownian motions $\nu_i$ also depend on $n$, but since this is of no importance for the further computations (see Proposition \ref{prop:conv_martin}), we will not indicate it to make the notation simpler.
\end{remark}
In the next lemma we introduce very useful bounds for the second and forth moments of $\lambda_i^{(n)}$. In particular, the result implies that there are no explosion of solutions whenever \eqref{eq:growth} holds.
\begin{lemma}
\label{lem:pk:bound}
Let $p_k^{(n)}$ be the power sum polynomials related to  \eqref{eq:MSDE:hermitian:scaled} or \eqref{eq:MSDE:real:scaled} and assume that \eqref{eq:growth} holds. Then for given $T>0$ and $k=2,4,6,\ldots$ there exists a constant $C_1=C_1(T,K,k)>0$ such that 
\begin{eqnarray}
\label{eq:pk:bound}
   \E p_k^{(n)}(t)\leq C_1\left(n+p_k^{(n)}(0)\right)\/,
\end{eqnarray} 
for every $t<T$ and $n\in\N$.
\end{lemma}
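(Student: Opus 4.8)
The plan is to set up a closed system of differential inequalities for the quantities $u_k(t):=\E p_k^{(n)}(t)$, $k=2,4,6,\dots$, obtained by taking expectations in the scaled SDE of Remark \ref{rem:pkn:SDE} and applying Gr\"onwall's inequality. First I would introduce a localizing sequence of stopping times $\tau_m=\inf\{t:\;p_2^{(n)}(t)\geq m\}$ (recall $p_2^{(n)}\geq 0$ since $k$ is even, so this controls $\max_i |\lambda_i^{(n)}|$ up to a factor $n^{1/2}$) in order to make the local martingale term a true martingale with zero expectation; all estimates below are first derived for $t\wedge\tau_m$ and then the passage $m\to\infty$ is justified by Fatou together with the very bound we are proving, which is the standard argument ruling out explosion. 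After taking expectations, the martingale part vanishes and we are left with
\formula{
   u_k(t) = u_k(0) + \E\int_0^t \Bigl[\tfrac{2k(k-1)}{n}\sum_i (\lni)^{k-2}g_n^2 h_n^2 + \tfrac{k}{n}\sum_i(\lni)^{k-1}b_n(\lni) + \beta\tfrac{k}{n}\sum_{i<j}\bigl(\sum_{l=0}^{k-2}(\lni)^l(\lnj)^{k-2-l}\bigr)G_n(\lni,\lnj)\Bigr]\,ds.
}

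The core of the argument is to bound each of the three drift contributions by a constant times $n + p_k^{(n)}(s)$. For the diffusion term, the growth hypothesis \eqref{eq:growth} gives $g_n^2(x)h_n^2(x)\leq \tfrac14(g_n^2(x)+h_n^2(x))^2\leq \tfrac{K^2}{4}(1+|x|)^2$, so $\tfrac{1}{n}\sum_i|\lni|^{k-2}g_n^2h_n^2 \lesssim \tfrac1n\sum_i |\lni|^{k-2}(1+|\lni|)^2 \lesssim \tfrac1n\sum_i(1+|\lni|^k)$, and since $\tfrac1n\sum_i|\lni|^k$ is comparable to $\tfrac1n p_k^{(n)}$ (again using $k$ even so the terms are nonnegative), this is $\lesssim 1+\tfrac1n p_k^{(n)}\leq 1+\tfrac1n(n+p_k^{(n)})$. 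The drift term is handled the same way using $|b_n(x)|/n\leq K(1+|x|)$: $\tfrac1n\sum_i|\lni|^{k-1}|b_n(\lni)|/n\cdot n \le K\sum_i|\lni|^{k-1}(1+|\lni|)\lesssim n + p_k^{(n)}$ after the same Young/comparison step (note the drift already carries a $1/n$, so multiplying by the extra $k/n$ is harmless). For the interaction term I would use $|G_n(x,y)|\leq K^2(1+|x|)(1+|y|)$ from \eqref{eq:growth}, bound $|\sum_{l=0}^{k-2}x^l y^{k-2-l}|\leq (k-1)\max(|x|,|y|)^{k-2}\leq (k-1)(|x|^{k-2}+|y|^{k-2})$, and then by symmetry and Young's inequality $x^{k-2}(1+|x|)(1+|y|)\lesssim 1+|x|^k+|y|^k$; summing over the $\binom n2$ pairs and dividing by $n$ leaves $\tfrac1n\sum_{i<j}(\cdots)\lesssim \tfrac1n\cdot n\sum_i(1+|\lni|^k)\lesssim n+p_k^{(n)}$, which is exactly where the extra factor $1/n$ in front of the double sum is essential.

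Collecting these three estimates yields $u_k(t)\leq u_k(0) + C_2\int_0^t (n + u_k(s))\,ds$ for a constant $C_2=C_2(K,k)$, hence by Gr\"onwall $u_k(t)+n \leq (u_k(0)+n)e^{C_2 t}\leq e^{C_2 T}(n+p_k^{(n)}(0))$ for $t<T$, which is \eqref{eq:pk:bound} with $C_1=e^{C_2T}$ (absorbing the additive $n$ on the left). I expect the main obstacle to be purely bookkeeping rather than conceptual: one must be careful that, because the $\lni$ are not individually semimartingales and collisions are allowed, the computation is legitimate only through the validated semimartingale representation of $p_k^{(n)}$ in Remark \ref{rem:pkn:SDE} (itself justified by the continuity argument of Lemma \ref{lem:pk:SDE}), and one must handle the localization-and-Fatou step cleanly so that the a priori finiteness needed to start Gr\"onwall is not assumed circularly — this is what simultaneously delivers the non-explosion statement advertised before the lemma.
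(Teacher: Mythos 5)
Your proposal takes essentially the same route as the paper's proof: localize so that the stochastic-integral term is a true martingale, take expectations to kill it, bound all three deterministic drift contributions by $C(K,k)(n+p_k^{(n)})$ using the growth hypothesis \eqref{eq:growth} together with Young-type inequalities (exploiting that $k$ is even so that $p_k^{(n)}$ controls $\sum_i|\lni|^k$), apply Gr\"onwall to the localized expectation, and finish with Fatou as $m\to\infty$. The only cosmetic difference is that you bound the It\^o correction term $\frac{2k(k-1)}{n}\sum_i(\lni)^{k-2}g_n^2h_n^2$ separately, whereas the paper silently absorbs it into the interaction term by writing the double sum over $i\le j$ rather than $i<j$ (the diagonal contributes $\frac{2k(k-1)\beta}{n}\sum_i|\lni|^{k-2}g_n^2h_n^2$, which dominates the It\^o term since $\beta\ge 1$); your localization via $p_2^{(n)}$ versus the paper's via $p_k^{(n)}$ combined with a martingale reducing sequence $T_m$ is likewise an immaterial choice.
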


\begin{proof} 
Let $T_m\nearrow \infty$ be a sequence of stopping times such that $M_k^{(n)}(t\wedge T_m)$ is a martingale, where
$$
  M_k^{(n)} = \frac{2k}{n^{1/2}}\sum_{i=1}^n (\lni)^{k-1}g_n(\lni)h_n(\lni)d\nu_i\/,
$$
and define $\tau_m= T_m\wedge \inf\{t\geq 0: p_k^{(n)}(t)\leq m\}$. Consequently, by Remark \ref{rem:pkn:SDE}, we obtain
\begin{eqnarray*}
  \left|\E p_k^{(n)}(t\wedge \tau_m)\right| &\leq& |p_k^{(n)}(0)|+\frac{k}{n}\E\sum_{i=1}^n \int_0^{t\wedge\tau_m} |\lni|^{k-1}|b_n(\lni)|ds\\
	&&+\frac{k\beta}{n}\E\int_0^{t\wedge\tau_m}\sum_{i\leq j}\left( \sum_{l=0}^{k-2}|\lni|^{l}|\lnj|^{k-2-l}\right)G_n(\lni,\lnj)ds\/.
\end{eqnarray*}
By \eqref{eq:growth}, we have
\begin{eqnarray*}
	\sum_{i=1}^n (\lni)^{k-1}\frac{|b_n(\lni)|}{n} &\leq& K\sum_{i=1}^n (|\lni|^{k-1}+|\lni|^k)\leq 2K(n+\sum_{i=1}^n|\lni|^k)
	\end{eqnarray*}
and	since for every $x,y\geq 0$ we have 
$$
  \sum_{l=0}^{k-2}x^{l}y^{k-2-l}\leq (k-1)(x^{k-2}+y^{k-2})\/,\quad (x^{k-2}+y^{k-2})(1+x)(1+y)\leq 8(1+x^k+y^k)
$$
we can estimate
$\frac{1}{n}\sum_{i\leq j}\left( \sum_{l=0}^{k-2}|\lni|^{l}|\lnj|^{k-2-l}\right)G_n(\lni,\lnj)$ from above by
	\begin{eqnarray*}
	\frac{16(k-1)K^2}{n}\sum_{i\leq j}((\lni)^{k-2}+(\lnj)^{k-2})(1+|\lni|)(1+|\lnj|)
	&\leq& \frac{c_1}{n}\sum_{i\leq j}(1+(\lni)^{k}+(\lnj)^k)\\
	&\leq& 2c_1 (n+\sum_{i=1}^n (\lni)^k)\/,
\end{eqnarray*}
where $c_1=c_1(K,k)$. Collecting all together we arrive at
\begin{eqnarray*}
   \E p_k^{(n)}(t\wedge \tau_m)\leq p_k^{(n)}(0)+ c_2T\,n +c_2\int_0^t \E p_k^{(n)}(s\wedge \tau_m)ds\/,
\end{eqnarray*}
for some $c_2=c_2(k,K)>0$ and every $t$ smaller than the fixed $T>0$.
The function $t\to \E p_k^{(n)}(t\wedge \tau_m)$ is continuous by the Dominate Convergence Theorem and the definition of $\tau_m$. Consequently, by the Gronwall's lemma, we get
\begin{eqnarray*}
   \E p_k^{(n)}(t\wedge \tau_m)\leq (p_k^{(n)}(0)+ c_2T\,n)e^{c_2T}\leq C_1(n+p_k^{(n)}(0))
\end{eqnarray*}
for $C_1=C_1(T,K,k)>0$. Finally, letting $m\to\infty$ and using the Fatou's lemma we obtain \eqref{eq:pk:bound}.
\end{proof}


\section{Proof of Theorem \ref{thm:main}}
The proof of Theorem \ref{thm:main} is divided into several parts given in a series of propositions. We begin by showing the semimartingale representation of the empirical measure process acting on smooth functions. We apply the results from Section \ref{sec:sympol} to show Proposition \ref{prop:EM:SDE} without additional assumptions about collisions of eigenvalues. Next we study the martingale part of \eqref{eq:EM:SDE} together with the extra drift term appearing only in the real-valued case and  we show that both of them vanish when $n$ goes to infinity (Propositions \ref{prop:conv_martin} and \ref{prop:conv-sec-der}). In Proposition \ref{prop:tightness} we show that the family of measures $\{(\mu_t^{(n)})_{t\geq 0}, n\in\N\}$ is tight, which is the last step to show weak convergence along subsequences. We conclude this section with a characterization of the weak limits.

Recall that in this section we assume that $\mu^{(n)}_t$ is defined for a solution $X^{(n)}$ of \eqref{eq:MSDE:hermitian:scaled} or \eqref{eq:MSDE:real:scaled}, where \eqref{eq:growth} holds. Moreover, we assume that \eqref{eq:8momentbound} holds.

\subsection{Semimartingale representation of the empirical measure}
\begin{proposition}
\label{prop:EM:SDE}
   For every $f\in\mathcal{C}_b^2(\R)$ we have
	\begin{align}
		  \langle\lefteqn{\mu^{(n)}_t,f\rangle =\langle\mu^{(n)}_0,f\rangle+\frac{2}{n^{3/2}}\sum_{i=1}^n\int_0^tf'(\lambda^{(n)}_i)g_n(\lambda_i^{(n)})h_n(\lambda_i^{(n)})d\nu_i+\frac{1}{n}\int_0^t\int_{\R}f'(x)b_n(x)\mu_s^{(n)}(dx)ds}\nonumber\\	 \label{eq:EM:SDE}  
			&+\frac{2-\beta}{2n}\int_0^t\int_{\R} f''(x)G_n(x,x)\mu_s^{(n)}(dx)ds +\frac{\beta}{2}\int_0^t\int_{\R^2}\frac{f'(x)-f'(y)}{x-y}\,G_n(x,y)\mu_s^{(n)}(dx)\mu_s^{(n)}(dy)ds\/,
	\end{align}
	where $\nu_i$ are independent Brownian motions.
\end{proposition}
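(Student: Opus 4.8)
The plan is to prove \eqref{eq:EM:SDE} first for monomials $f(x)=x^k$, where it is a rearrangement of the semimartingale representation of the power sums $p^{(n)}_k$ obtained in Lemma \ref{lem:pk:SDE} and Remark \ref{rem:pkn:SDE} (recall $\langle\mu^{(n)}_t,f\rangle=\frac1n\sum_{i=1}^n f(\lambda^{(n)}_i(t))$, so that $\langle\mu^{(n)}_t,x^k\rangle=\frac1n p^{(n)}_k(t)$), and then to pass to a general $f\in\mathcal{C}_b^2(\R)$ by localizing the eigenvalues to a compact interval and approximating $f$ there by polynomials. Collisions of eigenvalues require no special care at this stage: they have already been dealt with inside Lemma \ref{lem:pk:SDE}/Remark \ref{rem:pkn:SDE} by the continuity argument used there, and \eqref{eq:EM:SDE} contains no singular term, the ratio $\frac{f'(x)-f'(y)}{x-y}$ being continuous on $\R^2$.

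\emph{The polynomial case.} Fix $f(x)=x^k$ with $k\geq2$ (the cases $k\leq1$ are immediate). Here $f'(x)=kx^{k-1}$, $f''(x)=k(k-1)x^{k-2}$ and $\frac{f'(x)-f'(y)}{x-y}=k\sum_{l=0}^{k-2}x^ly^{k-2-l}$. Dividing the expression for $dp^{(n)}_k$ in Remark \ref{rem:pkn:SDE} by $n$, the martingale part becomes $\frac{2}{n^{3/2}}\sum_i f'(\lambda^{(n)}_i)g_n(\lambda^{(n)}_i)h_n(\lambda^{(n)}_i)\,d\nu_i$ and the $b_n$-drift becomes $\frac1n\int f'(x)b_n(x)\mu^{(n)}_s(dx)\,ds$, matching the first two terms of \eqref{eq:EM:SDE}. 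For the rest I would split the double integral $\frac{\beta}{2}\int_{\R^2}\frac{f'(x)-f'(y)}{x-y}G_n(x,y)\,\mu^{(n)}_s(dx)\mu^{(n)}_s(dy)$ into its diagonal and off-diagonal parts: by symmetry of the integrand and $G_n(x,x)=2g^2_n(x)h^2_n(x)$, the off-diagonal part reproduces the $\beta$-drift term of Remark \ref{rem:pkn:SDE} after the overall division by $n$, while the diagonal part equals $\frac{\beta}{2n^2}\sum_i f''(\lambda^{(n)}_i)G_n(\lambda^{(n)}_i,\lambda^{(n)}_i)$ and, added to the term $\frac{2-\beta}{2n}\int f''(x)G_n(x,x)\mu^{(n)}_s(dx)$ of \eqref{eq:EM:SDE}, gives — using $\frac{\beta}{2}+\frac{2-\beta}{2}=1$ — precisely the It\^o second-order term $\frac{2k(k-1)}{n^2}\sum_i(\lambda^{(n)}_i)^{k-2}g^2_n(\lambda^{(n)}_i)h^2_n(\lambda^{(n)}_i)$ of Remark \ref{rem:pkn:SDE}. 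This proves \eqref{eq:EM:SDE} for $f(x)=x^k$, hence by linearity for every polynomial.

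\emph{Passage to $f\in\mathcal{C}_b^2(\R)$.} Let $\tau_R=\inf\{t\geq0:\|X^{(n)}_t\|_{\mathrm{op}}\geq R\}$, a stopping time since $\|X^{(n)}_t\|_{\mathrm{op}}=\max_i|\lambda^{(n)}_i(t)|$ is continuous and adapted; choose $R>\|X^{(n)}_0\|_{\mathrm{op}}$ so that $\supp\mu^{(n)}_s\subset[-R,R]$ for all $s\leq\tau_R$. By the Weierstrass theorem pick polynomials $P_m$ with $P_m\to f$, $P_m'\to f'$ and $P_m''\to f''$ uniformly on $[-R,R]$; the representation $\frac{P_m'(x)-P_m'(y)}{x-y}=\int_0^1 P_m''(y+u(x-y))\,du$ then yields $\frac{P_m'(x)-P_m'(y)}{x-y}\to\frac{f'(x)-f'(y)}{x-y}$ uniformly on $[-R,R]^2$. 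Writing \eqref{eq:EM:SDE} for $P_m$ and stopping at $t\wedge\tau_R$, I let $m\to\infty$: the initial and terminal terms and the three drift integrals converge by uniform convergence of the integrands on $[-R,R]$ (resp.\ $[-R,R]^2$) together with continuity — hence local boundedness — of $g_n,h_n,b_n$, while the martingale term converges in $L^2$ by the It\^o isometry, its second moment being at most $\frac{4}{n^2}\,t\,\|P_m'-f'\|^2_{\infty,[-R,R]}\sup_{|x|\leq R}g^2_n(x)h^2_n(x)\to0$. This gives \eqref{eq:EM:SDE} with $t\wedge\tau_R$ in place of $t$. Finally $\tau_R\uparrow\infty$ a.s.\ as $R\to\infty$, because \eqref{eq:growth} precludes explosion of $X^{(n)}$ (Lemma \ref{lem:pk:bound}; note $\|X^{(n)}_t\|^2_{\mathrm{op}}\leq p^{(n)}_2(t)$), so letting $R\to\infty$ and using path-continuity of all terms yields \eqref{eq:EM:SDE} for every $t\geq0$.

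The only genuinely delicate step is this last one: a bounded $\mathcal{C}^2$ function cannot be approximated by polynomials uniformly on the whole line, which is why the reduction to the polynomial case must be performed on the random but a.s.\ bounded windows $[0,\tau_R]$ and the localization then removed through the no-explosion estimate. (Alternatively one could apply It\^o's formula directly to the $\mathcal{C}^2$ functional $X\mapsto\operatorname{tr}f(X)$ on $\mathcal{H}_n$, computing its first two derivatives via the Daleckii--Krein formula, and conclude by the continuity argument of Lemma \ref{lem:sympol:SDE}; the route through polynomials seems more economical given what has already been established.)
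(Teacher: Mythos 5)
Your proof follows essentially the same route as the paper's: establish \eqref{eq:EM:SDE} for monomials by regrouping the drift terms in the power-sum semimartingale representation of Remark \ref{rem:pkn:SDE} (splitting the double integral into its diagonal and off-diagonal parts so that $\tfrac{\beta}{2}+\tfrac{2-\beta}{2}=1$ recovers the It\^o second-order term), then pass to general $f\in\mathcal{C}_b^2(\R)$ by polynomial approximation on a window localized by a spectral-radius stopping time, and finally remove the localization via the no-explosion consequence of Lemma \ref{lem:pk:bound}. Your treatment of the martingale term through It\^o isometry makes explicit a step the paper covers briefly with dominated convergence, but the argument is the same.
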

\begin{proof}
  We begin by considering $f(x)=x^k$. Although this function does not belong to $\mathcal{C}_b^2(\R)$ we simply have that $\langle \mu^{(n)}_t,f\rangle = \frac{p_k^{(n)}(t)}{n}$. Moreover, 
	$$
	  f'(x)=k x^{k-1}\quad\quad \textrm{and}\quad\quad\dfrac{f'(x)-f'(y)}{x-y} = k \sum_{l=0}^{k-2}x^ly^{k-2-l}\/,
	$$
	which give
	\begin{eqnarray*}
	   \frac{\beta k}{n^2}\sum_{i<j}\lefteqn{\left(\sum_{l=0}^{k-2}x^ly^{k-2-l}\right) G_n(\lni,\lnj) = \frac{\beta}{n^2}\sum_{i<j}\frac{f'(\lni)-f'(\lnj)}{\lni-\lnj} G_n(\lni,\lnj)}\\
		&=&\frac{\beta}{2n^2}\sum_{i,j=1}^n \frac{f'(\lni)-f'(\lnj)}{\lni-\lnj} \,G_n(\lni,\lnj)- \frac{\beta}{2n^2}\sum_{i=1}^n f''(\lni)G_n(\lni,\lni)\\
		&=& \frac{\beta}{2}\int_{\R^2}\frac{f'(x)-f'(y)}{x-y} \,G_n(x,y)\mu_t^{(n)}(dx)\mu_t^{(n)}(dy)-\frac{\beta}{2n}\int_{\R}f''(x)G_n(x,x)\mu_t^{(n)}(dx)\/,
	\end{eqnarray*}
	where we have used a convention that $(f'(x)-f'(y))/(x-y)$ is equal to $f''(x)$ for $x=y$. Thus, by Remark \ref{rem:pkn:SDE}, we claim that \eqref{eq:EM:SDE} holds for $f(x)=x^k$ and consequently for every polynomial.
	
	Now fix a function $f\in \mathcal{C}_b^2(\R)$ and a constant $M=M(n)>0$ such that 
	$$
	-M<\lambda_1^{(n)}(0)\leq \ldots\leq \lambda_n^{(n)}(0)<M\/.
	$$ 
	There exists a sequence of polynomials $(f_k)_{k\in\N}$ such that $f_k \to f$, $f_k' \to f_k'$ and $f_k''\to f''$ uniformly on $[-M,M]$. In particular they are uniformly bounded on the interval. Let $\tau_M = \inf\{t: \lambda_1(t)=-M \textrm{ or } \lambda_n(t)=M\}$. Since \eqref{eq:EM:SDE} holds for every $f_k$ and $t$ changed into $t\wedge \tau_M$, we can apply the Dominated Convergence Theorem to show that  \eqref{eq:EM:SDE} is true for $f$ and every $t<\tau_M$. We finish the proof taking $M\to \infty$. Note that then $\tau_M\to \infty$ a.s. since there is no explosion of $\lni$ in finite time. 
\end{proof}

\begin{remark}
  Note that \eqref{eq:EM:SDE} holds without the growth condition \eqref{eq:growth}, but then we have to consider it up to the explosion time, which can be finite with positive probability.
\end{remark}

\subsection{Convergence of the martingale and the second derivative parts}

We begin with showing that the martingale part in \eqref{eq:EM:SDE} vanishes when the size of the matrix grows to infinity.
\begin{proposition} 
  \label{prop:conv_martin}
For every $T>0$ and $f\in\mathcal{C}_b^2(\R)$ we have that
	\begin{eqnarray*}
	\lim_{n\to\infty}\frac{2}{n^{3/2}}\sup_{t\in[0,T]}\sum_{i=1}^n\int_0^tf'(\lambda^{(n)}_i)g_n(\lambda_i^{(n)})h_n(\lambda_i^{(n)})d\nu_i &=& 0\qquad\text{a.s.}
	\end{eqnarray*}
\end{proposition}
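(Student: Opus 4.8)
The plan is to estimate the quadratic variation of the martingale
\[
M^{(n)}_t := \frac{2}{n^{3/2}}\sum_{i=1}^n \int_0^t f'(\lambda_i^{(n)})g_n(\lambda_i^{(n)})h_n(\lambda_i^{(n)})\,d\nu_i,
\]
show that $\E\langle M^{(n)}\rangle_T = O(n^{-2})$, and then combine Doob's $L^2$ maximal inequality with the Borel--Cantelli lemma to upgrade this to almost sure uniform convergence.

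First, since the $\nu_i$ are independent one-dimensional Brownian motions,
\[
\langle M^{(n)}\rangle_t = \frac{4}{n^3}\sum_{i=1}^n\int_0^t \bigl(f'(\lambda_i^{(n)}(s))\bigr)^2 g_n^2(\lambda_i^{(n)}(s))\,h_n^2(\lambda_i^{(n)}(s))\,ds.
\]
Using $f\in\mathcal{C}_b^2(\R)$ so that $|f'|\le \|f'\|_\infty$, together with the elementary bound $g_n^2 h_n^2 \le \tfrac14(g_n^2+h_n^2)^2 \le \tfrac{K^2}{4}(1+|x|)^2 \le \tfrac{K^2}{2}(1+x^2)$ coming from \eqref{eq:growth}, one gets
\[
\langle M^{(n)}\rangle_t \le \frac{2K^2\|f'\|_\infty^2}{n^3}\int_0^t\bigl(n + p_2^{(n)}(s)\bigr)\,ds.
\]

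Second, I take expectations and invoke Lemma \ref{lem:pk:bound} with $k=2$ to bound $\E p_2^{(n)}(s)\le C_1(n+p_2^{(n)}(0))$; since $p_2^{(n)}(0)=n\int_\R x^2\mu_0^{(n)}(dx)$ and \eqref{eq:8momentbound} (via Jensen's inequality) yields $\sup_n\int_\R x^2\mu_0^{(n)}(dx)<\infty$, this gives $\E p_2^{(n)}(s)\le C_2 n$ uniformly in $n$ and $s\le T$, hence $\E\langle M^{(n)}\rangle_T \le C_3/n^2$. A routine localization by stopping times $T_m\nearrow\infty$, applying Doob's inequality to the stopped martingale and then letting $m\to\infty$ via Fatou, makes this rigorous and simultaneously shows that $M^{(n)}$ is a genuine $L^2$-martingale on $[0,T]$, not merely a local one.

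Third, Doob's $L^2$ maximal inequality gives $\E\bigl[\sup_{t\le T}|M^{(n)}_t|^2\bigr]\le 4\,\E\langle M^{(n)}\rangle_T\le 4C_3/n^2$, so Chebyshev's inequality yields $\pr\bigl(\sup_{t\le T}|M^{(n)}_t|>\eps\bigr)\le 4C_3/(\eps^2 n^2)$, which is summable in $n$. Borel--Cantelli then shows that, almost surely, $\sup_{t\le T}|M^{(n)}_t|\le\eps$ for all large $n$; intersecting over $\eps=1/k$, $k\in\N$, gives $\sup_{t\in[0,T]}|M^{(n)}_t|\to 0$ almost surely, which is the claim (note $2M^{(n)}_t/n^{3/2}$ in the statement is already incorporated into $M^{(n)}$). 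I expect the only genuinely delicate point to be the localization argument ensuring $M^{(n)}$ is a true $L^2$-martingale so that Doob applies; the decay rate $n^{-2}$ is comfortably fast for Borel--Cantelli, so no sharpening is needed there, and the rest is bookkeeping of constants depending only on $T$, $K$, and $\|f'\|_\infty$.
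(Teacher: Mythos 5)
Your proof is correct and follows essentially the same route as the paper: Doob's $L^2$ maximal inequality applied to the martingale, a quadratic-variation bound using $g_n^2h_n^2\le \tfrac{K^2}{2}(1+x^2)$ and Lemma \ref{lem:pk:bound} with $k=2$, the uniform second-moment bound from \eqref{eq:8momentbound}, and finally Borel--Cantelli. Your additional care in making the localization explicit and citing Jensen for the second moment is a small amount of extra rigor, but the structure is identical to the paper's argument.
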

\begin{proof}
By Doob's inequality
\begin{align*}
\E\left[\left(\frac{2}{n^{3/2}}\sup_{t\in[0,T]}\sum_{i=1}^n\int_0^tf'(\lambda^{(n)}_i)g_n(\lambda_i^{(n)})h_n(\lambda_i^{(n)})d\nu_i\right)^2\right]\leq \frac{4}{n^3}\sum_{i=1}^n\E\left[\int_0^T(f'(\lambda^{(n)}_i))^2g^2_n(\lambda_i^{(n)})h^2_n(\lambda_i^{(n)})ds\right].
\end{align*}
Since \eqref{eq:growth} gives $g^2_n(x)h^2_n(x)\leq 2K(1+|x|^2)$, we can apply \eqref{eq:pk:bound} from Lemma \ref{lem:pk:bound} to get
	\begin{align*}
	\E\left[\left(\frac{2}{n^{3/2}}\sup_{t\in[0,T]}\sum_{i=1}^n\int_0^tf'(\lambda^{(n)}_i)g_n(\lambda_i^{(n)})h_n(\lambda_i^{(n)})d\nu_i\right)^2\right]
	&\leq \|f'\|_{\infty}^2\frac{8K^2}{n^3}\sum_{i=1}^n\E\left[\int_0^T(1+(\lambda_i^{(n)})^2)ds\right]\\
	&\leq \|f'\|_{\infty}^2\frac{c_1}{n^2}\left(\frac{1}{n}\sum_{i=1}^n(\lambda_i^{(n)}(0))^2+1\right)\/,
	\end{align*}
	with $c_1=8K^2T(C_1(T,K,2)+1)$, where $C_1(T,K,2)$ is the constant appearing in \eqref{eq:pk:bound} for $k=2$.
By \eqref{eq:8momentbound} we can find a constant $K_T$ depending only on $T$ such that
\begin{align*}
\E\left[\left(\frac{2}{n^{3/2}}\sup_{t\in[0,T]}\sum_{i=1}^n\int_0^tf'(\lambda^{(n)}_i)g_n(\lambda_i^{(n)})h_n(\lambda_i^{(n)})d\nu_i\right)^2\right]\leq\|f'\|_{\infty}^2\frac{1}{n^2}K_T.
\end{align*}
Hence for any $ \varepsilon>0$ 
	\begin{eqnarray*}
	\sum_{n\geq 1}\pr\lefteqn{\left(\sup_{t\in[0,T]}\Bigg|\frac{2}{n^{3/2}}\int_0^tf'(\lambda^{(n)}_i)g_n(\lambda_i^{(n)})h_n(\lambda_i^{(n)})d\nu_i\Bigg|>\varepsilon\right)}\\
	&\leq&\frac{1}{\varepsilon^2}\sum_n\E\left[\left(\frac{2}{n^{3/2}}\sum_{i=1}^n\sup_{t\in[0,T]}\int_0^tf'(\lambda^{(n)}_i)g_n(\lambda_i^{(n)})h_n(\lambda_i^{(n)})d\nu_i\right)^2\right]
	\leq \frac{1}{\varepsilon^2}\sum_n\|f'\|_{\infty}^2\frac{1}{n^2}K_T<\infty.
	\end{eqnarray*}
Then the Borel-Cantelli Lemma implies that 
\begin{align*}
\lim_{n\to\infty}\frac{2}{n^{3/2}}\sup_{t\in[0,T]}\sum_{i=1}^n\int_0^tf'(\lambda^{(n)}_i)g_n(\lambda_i^{(n)})h_n(\lambda_i^{(n)})d\nu_i=0\qquad\text{a.s.}
\end{align*}
\end{proof}
Similar computations show that the forth component in \eqref{eq:EM:SDE}, which is non-zero only in the real-valued case, also vanishes when $n\to \infty$. 
\begin{proposition}
\label{prop:conv-sec-der}
For every fixed $T>0$ and $f\in\mathcal{C}_b^2(\R)$ we have 
\begin{equation}
\label{eq:term-sec-der}
\lim_{n\to\infty}\E\left|\frac{1}{n}\int_0^t\int_{\R}f''(x)G_n(x,x)\mu_s^{(n)}(dx)ds\right|=0
\end{equation}
for all $t\in[0,T]$.
\end{proposition}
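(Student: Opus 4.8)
The plan is to estimate the integral in absolute value directly, with no need for a martingale or an oscillation argument: the explicit factor $1/n$ already produces the decay, provided we control the expected second moment of $\mu_s^{(n)}$ uniformly in $n$ and in $s\le T$, and this is exactly what Lemma~\ref{lem:pk:bound} supplies.

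First I would rewrite $G_n(x,x)=2g_n^2(x)h_n^2(x)$ and use the growth bound \eqref{eq:growth} together with $ab\le\tfrac14(a+b)^2$ and $(1+|x|)^2\le 2(1+x^2)$ to obtain $|G_n(x,x)|\le K^2(1+x^2)$ for all $x\in\R$ and $n\in\N$. Since $f\in\mathcal{C}_b^2(\R)$, this gives the pointwise bound
\begin{equation*}
\left|\frac1n\int_0^t\int_{\R}f''(x)G_n(x,x)\mu_s^{(n)}(dx)\,ds\right|\le\frac{\|f''\|_{\infty}K^2}{n}\int_0^t\Big(1+\tfrac1n p_2^{(n)}(s)\Big)ds,
\end{equation*}
where I used $\int_{\R}(1+x^2)\mu_s^{(n)}(dx)=1+\tfrac1n p_2^{(n)}(s)$ coming straight from the definition of the empirical measure.

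Next I would take expectations and apply Lemma~\ref{lem:pk:bound} with $k=2$, which yields $\E p_2^{(n)}(s)\le C_1(n+p_2^{(n)}(0))$ for $s<T$, hence $\tfrac1n\E p_2^{(n)}(s)\le C_1+\tfrac{C_1}{n}p_2^{(n)}(0)$. Finally $\tfrac1n p_2^{(n)}(0)=\int_{\R}x^2\mu_0^{(n)}(dx)$ is bounded uniformly in $n$ by Jensen's (equivalently Hölder's) inequality for the probability measure $\mu_0^{(n)}$ together with the eighth-moment assumption \eqref{eq:8momentbound}. Combining everything,
\begin{equation*}
\E\left|\frac1n\int_0^t\int_{\R}f''(x)G_n(x,x)\mu_s^{(n)}(dx)\,ds\right|\le\frac{c\,T}{n}\longrightarrow 0,\qquad n\to\infty,
\end{equation*}
for a constant $c$ depending only on $T$, $K$, $\|f''\|_{\infty}$ and $\sup_n\int_{\R}x^2\mu_0^{(n)}(dx)$, and the bound is uniform for $t\in[0,T]$; this is \eqref{eq:term-sec-der}.

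There is no genuine obstacle here: the estimate is a one-line computation once Lemma~\ref{lem:pk:bound} is available. The only point requiring a little care is that the bound on $\E p_2^{(n)}(s)$ must be uniform in $s\le T$ (so that the time integration is harmless) and in $n$; both uniformities are built into Lemma~\ref{lem:pk:bound} and into the initial-moment hypothesis \eqref{eq:8momentbound}.
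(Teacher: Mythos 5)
Your proof is correct and follows essentially the same route as the paper: bound $G_n(x,x)$ via \eqref{eq:growth}, pull out $\|f''\|_\infty$, take expectations and invoke Lemma \ref{lem:pk:bound} with $k=2$, then use \eqref{eq:8momentbound} (via Jensen/H\"older) to get the uniform-in-$n$ bound $O(1/n)$. The only cosmetic difference is that you made explicit the Jensen step reducing the eighth-moment hypothesis to a second-moment bound, which the paper leaves implicit.
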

\begin{proof}
Indeed, another application of \eqref{eq:pk:bound} gives
\begin{eqnarray*}
\E\left|\frac{1}{n}\int_0^t\int_{\R}f''(x)G_n(x,x)\mu_s^{(n)}(dx)ds\right| &=& \E\left|\frac{1}{n^2}\sum_{i=1}^n\int_0^tf''(\lambda_i^{(n)})G_n(\lambda_i^{(n)},\lambda_i^{(n)})ds\right|\\
&\leq& \frac{2K^2\|f''\|_{\infty}}{n^2}\sum_{i=1}^n\int_0^t\E(1+|\lambda_i^{(n)}|^2)ds\\
&\leq& \frac{C_T}{n}\|f''\|_{\infty}\left(1+\frac{1}{n}\sum_{i=1}^n|\lambda_i^{(n)}(0)|^2\right),
\end{eqnarray*}
where $C_T$ is a constant depending only on $T>0$. Hence, by \eqref{eq:8momentbound}, we can find a constant $C_{f,T}$ depending only on $f$ and $T$ such that
$$
\E\left|\frac{1}{n}\int_0^t\int_{\R}f''(x)G_n(x,y)\mu_s^{(n)}(dx)ds\right|\leq \frac{C_{f,T}}{n}.
$$
The previous inequality implies the result.
\end{proof}

\subsection{Tightness}
In this subsection we will prove the tightness of the family of laws $\{(\mu_t^{(n)})_{t\geq0}:n\in \N\}$ in the space $\mathcal{C}(\R_+,Pr(\R))$. The result is given in Proposition \ref{prop:tightness} below and it is a direct consequence of the following auxiliary result. 
\begin{lemma}
  \label{lem:tight:crit}
	For every fixed $T>0$ and $f\in \mathcal{C}_b^2(\R)$ there exists constant $C_2=C_2(K,T,f)$ such that
	\begin{equation}
	   \label{eq:bound_tightness}
	   \E\left(\langle\mu_t^{(n)},f\rangle-\langle\mu_s^{(n)},f\rangle\right)^4 \leq C_2\,(t-s)^2\/,
	\end{equation}
	for every $0\leq s<t\leq T$ and $n\in\N$.
\end{lemma}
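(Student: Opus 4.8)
The plan is to use the semimartingale representation from Proposition \ref{prop:EM:SDE} to write the increment $\langle\mu_t^{(n)},f\rangle-\langle\mu_s^{(n)},f\rangle$ as a sum of a martingale term (the stochastic integral against the $\nu_i$) and three finite-variation terms (the drift coming from $b_n$, the second-derivative term present only when $\beta=1$, and the double-integral term involving $G_n$). Since $(a_1+a_2+a_3+a_4)^4\leq 4^3(a_1^4+a_2^4+a_3^4+a_4^4)$, it suffices to bound the fourth moment of each of the four pieces separately by $C(t-s)^2$ (in fact each finite-variation term will be controlled by $C(t-s)^4\leq C T^2 (t-s)^2$, and the martingale term by $C(t-s)^2$).

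First I would handle the three finite-variation terms. Each is an integral $\int_s^t (\cdots)\,dr$ of an integrand that, by the growth bound \eqref{eq:growth}, is dominated by a constant times $\langle\mu_r^{(n)},1+|x|^2\rangle$ or $\langle\mu_r^{(n)},1+|x|\rangle\langle\mu_r^{(n)},1+|x|\rangle$; using $\|f'\|_\infty$, $\|f''\|_\infty$ and the sub-linearity of $g_n^2,h_n^2,b_n/n$ one gets an integrand bounded by $c(1+p_2^{(n)}(r)/n)$. Then by Jensen (or Hölder) applied to the normalized measure $dr/(t-s)$ on $[s,t]$,
\[
\E\left(\int_s^t c\Big(1+\tfrac{1}{n}p_2^{(n)}(r)\Big)dr\right)^4\leq c^4 (t-s)^3\int_s^t \E\Big(1+\tfrac1n p_2^{(n)}(r)\Big)^4 dr .
\]
To finish these terms I need $\sup_{r\leq T}\E\big(1+\tfrac1n p_2^{(n)}(r)\big)^4\leq C(K,T)\big(1+\tfrac1n p_8^{(n)}(0)\big)$; this follows because $\big(\tfrac1n p_2^{(n)}\big)^4\leq \tfrac1n p_8^{(n)}$ by power-mean / Jensen on the empirical measure, and then Lemma \ref{lem:pk:bound} with $k=8$ together with the eighth-moment hypothesis \eqref{eq:8momentbound} gives a bound uniform in $n$. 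This yields the desired $C_2(t-s)^2$ for these three pieces (even with an extra power of $(t-s)$ to spare).

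For the martingale term $N_t:=\frac{2}{n^{3/2}}\sum_{i=1}^n\int_s^t f'(\lambda_i^{(n)})g_n(\lambda_i^{(n)})h_n(\lambda_i^{(n)})\,d\nu_i$, I would use the Burkholder--Davis--Gundy inequality: $\E N_t^4\leq C\,\E\langle N\rangle_t^2$, where the quadratic variation is $\langle N\rangle_t=\frac{4}{n^3}\sum_{i=1}^n\int_s^t (f'(\lambda_i^{(n)}))^2 g_n^2 h_n^2(\lambda_i^{(n)})\,dr$. By \eqref{eq:growth} this is at most $\frac{C\|f'\|_\infty^2}{n^3}\sum_i\int_s^t(1+|\lambda_i^{(n)}|^2)\,dr=\frac{C\|f'\|_\infty^2}{n^2}\int_s^t\big(1+\tfrac1n p_2^{(n)}(r)\big)dr$, so $\langle N\rangle_t^2\leq \frac{C}{n^4}(t-s)\int_s^t\big(1+\tfrac1n p_2^{(n)}(r)\big)^2 dr$; taking expectations and invoking Lemma \ref{lem:pk:bound} (with $k=4$ this time, and $(\tfrac1n p_2)^2\leq \tfrac1n p_4$) plus \eqref{eq:8momentbound} gives $\E N_t^4\leq \frac{C}{n^3}(t-s)^2\leq C(t-s)^2$, which is more than enough.

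The only real subtlety is that Proposition \ref{prop:EM:SDE} is stated for $f\in\mathcal C_b^2(\R)$ while Lemma \ref{lem:pk:bound} feeds in polynomial moment growth; the growth bound \eqref{eq:growth} on the coefficients is exactly what reconciles them, since $f'$ and $f''$ are bounded so all the $|x|^k$ that appear come solely from $g_n^2 h_n^2$ and $b_n/n$, and these contribute at most quadratic growth, matched by $p_2^{(n)}$, whose higher moments are then controlled by $p_8^{(n)}$. Summing the four fourth-moment bounds gives \eqref{eq:bound_tightness} with a constant $C_2$ depending only on $K$, $T$ and $\|f'\|_\infty,\|f''\|_\infty$ (hence on $f$), and I would remark that the standard Kolmogorov--Chentsov / Billingsley tightness criterion for $\mathcal C(\R_+,\mathrm{Pr}(\R))$ then applies coordinatewise over a countable convergence-determining family of test functions $f$, which is how Proposition \ref{prop:tightness} will be deduced from this lemma.
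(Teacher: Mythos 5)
Your proof is correct and takes essentially the same route as the paper: split the increment into the four pieces of \eqref{eq:EM:SDE}, bound each fourth moment separately (BDG for the martingale, Jensen/H\"older in time for the drift pieces), and reduce everything to moment bounds from Lemma \ref{lem:pk:bound} together with \eqref{eq:8momentbound}. The only cosmetic differences are that you apply BDG directly to the full sum of stochastic integrals where the paper first applies H\"older over $i$ and then BDG termwise (both yield a bound that is more than enough), you uniformize the three drift integrands by $c(1+p_2^{(n)}/n)$ and push to $p_8$ via Jensen whereas the paper estimates each $D_i$ separately via $p_4$ or $p_8$, and the "$\E N_t^4\le C n^{-3}(t-s)^2$" should be $n^{-4}$ (immaterial, since any negative power of $n$ suffices).
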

\begin{proof}
  Using the semimartingale representation \eqref{eq:EM:SDE} we can write 
	\begin{equation*}
	   \E\left(\langle\mu_t^{(n)},f\rangle-\langle\mu_s^{(n)},f\rangle\right)^4 \leq 64 (M+D_1+D_2+D_3)\/,
	\end{equation*}
	where the difference between martingale parts is 
	\begin{eqnarray*}
	  M &=& \frac{16}{n^{6}}\,\E \left(\sum_{i=1}^n \int_s^t f'(\lni)g_n(\lni)h_n(\lni)d\nu_i\right)^4
	\end{eqnarray*}
	and the drift part can be naturally divided into three parts estimated as
	\begin{eqnarray*}
		D_1 &=& \frac{1}{n^4}\, \E\left( \int_s^t \int_{\R}f'(x)b_n(x)\mu_u^{(n)}(dx)du\right)^4\\
		D_2 &=& \frac{(2-\beta)^4}{16n^4}\E\left(\int_s^t \int_{\R}f''(x)G_n(x,x)\mu_u^{(n)}(dx)du\right)^4\\
		D_3 &=& \frac{\beta^4}{16}\E\left(\int_s^t \int_{\R^2} \frac{f'(x)-f'(y)}{x-y}\,G_n(x,y)\mu_{u}^{(n)}(dx)\mu_{u}^{(n)}(dy)du \right)^4\/.
	\end{eqnarray*}
	We will estimate each of those components separately. To deal with $M$ we apply the H\"older inequality and then the Burkholder-Davis-Gundy inequality to obtain
	\begin{eqnarray*}
	  \frac{1}{n^{6}}\,\E \left[\left(\sum_{i=1}^n \int_s^t f'(\lni)g_n(\lni)h_n(\lni)d\nu_i\right)^4\right]&\leq &\frac{1}{n^{3}}  \sum_{i=1}^n\E\left[\left( \int_s^t f'(\lni)g_n(\lni)h_n(\lni)d\nu_i\right)^4\right]\\
		&\leq &\frac{c_1}{n^3} \sum_{i=1}^n\E\left[\left( \int_s^t (f'(\lni))^2 g^2_n(\lni)h^2_n(\lni)du\right)^2\right]\/.
	\end{eqnarray*}
	Then, the boundedness of $f'$, the growth condition \eqref{eq:growth} and once again an application of H\"older inequality give us
	\begin{eqnarray*}
	  M \leq \frac{c_2}{n^3}  ||f'||_\infty^4 K^4 \sum_{i=1}^n\E\left( \int_s^t  (1+(\lni)^2)du\right)^2 \leq \frac{c_3(t-s)}{n^3}\sum_{i=1}^n \int_s^t \E(1+(\lni)^4)du \/,
	\end{eqnarray*}
	where $c_3=c_3(K,f)$.
	Finally, using \eqref{eq:pk:bound} we obtain 
	\begin{equation}
	 \label{eq:M:bound}
	  M \leq \frac{c_3(t-s)}{n^3}\int_s^t (n+ \E p_4^{(n)}(u))du \leq \frac{c_4}{n^2}(t-s)^2\left(1+\frac{1}{n} p_4^{(n)}(0)\right)\/, 
	\end{equation}
	with $c_4=c_4(K,T,f)$. On the other hand, in a similar way, we obtain
	\begin{equation*}
	  D_1 = \frac{1}{n^4}\E\left( \sum_{i=1}^n\int_s^t f'(\lni)\frac{b_n(\lni)}{n}du\right)^4 \leq \frac{(t-s)^3}{n}\E \sum_{i=1}^n\int_s^t \left(f'(\lni)\frac{b_n(\lni)}{n}\right)^4du.
	\end{equation*}	
	Using \eqref{eq:growth} together with \eqref{eq:pk:bound} lead to
	\begin{equation}
	\label{eq:D1:bound}
	 D_1 \leq {K^4}||f'||_\infty^4\frac{(t-s)^3}{n}\sum_{i=1}^n\int_s^t (1+|\lni|)^4\,du
		\leq C_{f,T}\,(t-s)^4\left(1+ \frac{1}{n}p_4^{(n)}(0)\right)\/.
	\end{equation}
	Since $G_n(x,x)\leq 2K^2(1+|x|)^2$ and $\beta\leq 2$, proceeding as previously, we arrive at
	\begin{equation}
	   \label{eq:D2:bound}
	   D_2 \leq \frac{(t-s)^3}{n}\, \E \sum_{i=1}^n\int_s^t \left(f''(\lni)G_n(x,x)\right)^4du \leq C_{f,T}\,(t-s)^4\left(1+ \frac{1}{n}p_8^{(n)}(0)\right)\/.
	\end{equation}
	Finally, we also have
	\begin{eqnarray*}
	  D_3 &=& \left(\frac{\beta}{2}\right)^4\,\E \left(\frac{1}{n^2}\ \sum_{i=1}^n\sum_{i=1}^n \int_s^t \frac{f'(\lni)-f'(\lnj)}{\lni-\lnj}\,G_n(\lni,\lnj)du\right)^4\\
		&\leq& \frac{(t-s)^3 ||f''||_{\infty}^{4}}{n^2} \sum_{i=1}^n\sum_{i=1}^n \E \left(\int_s^t G^4_n(\lni,\lnj)du\right)\/,
	\end{eqnarray*}
	which leads to existence of $c_4=c_4(K,f)>0$ such that 
	\begin{equation}
	   \label{eq:D3:bound}
		  D_3\leq c_4\,(t-s)^4\left(1+ \frac{1}{n}p_8^{(n)}(0)\right)\/.
	\end{equation}
	Collecting  \eqref{eq:M:bound}, \eqref{eq:D1:bound}, \eqref{eq:D2:bound} and \eqref{eq:D3:bound} we arrive at
	\begin{eqnarray*}
		 \E\left(\langle\mu_t^{(n)},f\rangle-\langle\mu_s^{(n)},f\rangle\right)^4 \leq c_5(t-s)^2\left(1+\frac{1}{n}p_4^{(n)}(0)+\frac{1}{n}p_8^{(n)}(0)\right)\/,
	\end{eqnarray*}	
The expression in the last bracket above is bounded in $n$ by \eqref{eq:8momentbound}, which ends the proof of \eqref{eq:bound_tightness}.
\end{proof}

	Therefore, by the well-known criterion (see Proposition 10.3 in \cite{bib:EthierKurtz1986}), the estimate \eqref{eq:bound_tightness} implies that the sequence of continuous real processes $\{(\langle\mu^{(n)}_t,f\rangle,t\geq0);n\geq1\}$ is tight and consequently we have the following result:
	
\begin{proposition}
  \label{prop:tightness}
	The family of measures $\{(\mu^{n}_t)_{t\geq 0}: n\in\N\}$ is tight in $\mathcal{C}(\R_+,\textrm{Pr}(\R))$.
\end{proposition}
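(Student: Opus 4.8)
The tightness in $\mathcal{C}(\R_+,\R)$ of the real-valued processes $(\langle\mu^{(n)}_t,f\rangle)_{t\ge0}$, for each $f\in\mathcal{C}_b^2(\R)$, has just been established from Lemma~\ref{lem:tight:crit} via \cite{bib:EthierKurtz1986}; the plan is to lift this to the level of the $\mathrm{Pr}(\R)$-valued processes by means of a standard criterion for measure-valued processes (of Roelly-Coppoletta / Jakubowski type). Such a criterion reduces the $\mathcal{C}(\R_+,\mathrm{Pr}(\R))$-tightness of $\{(\mu^{(n)}_t)_{t\ge0}:n\in\N\}$ to two ingredients: (a) for every $f$ in a class $\mathcal{D}\subset\mathcal{C}_b(\R)$ that is closed under addition, contains the constants, and whose associated functionals $\nu\mapsto\langle\nu,f\rangle$ separate the points of $\mathrm{Pr}(\R)$, the processes $(\langle\mu^{(n)}_t,f\rangle)_{t\ge0}$ are tight in $\mathcal{C}(\R_+,\R)$; and (b) a compact-containment condition: for every $T,\eps>0$ there is a compact $\mathcal{K}_{T,\eps}\subset\mathrm{Pr}(\R)$ with $\inf_n\pr\big(\mu^{(n)}_t\in\mathcal{K}_{T,\eps}\ \text{for all}\ t\in[0,T]\big)\ge 1-\eps$.

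Ingredient (a) is handled by taking $\mathcal{D}=\mathcal{C}_b^2(\R)$: it is closed under addition, contains the constants, is measure-determining (so the functionals $\nu\mapsto\langle\nu,f\rangle$ separate the points of $\mathrm{Pr}(\R)$), and the required path-tightness is exactly what Lemma~\ref{lem:tight:crit} supplies, the initial values being trivially tight since $|\langle\mu^{(n)}_0,f\rangle|\le\|f\|_\infty$.

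For ingredient (b) I would use the second moment bound. Fix $T>0$. Rerunning the proof of Lemma~\ref{lem:pk:bound} with $k=2$, but applying Doob's inequality and the Burkholder-Davis-Gundy inequality to the martingale part $M_2^{(n)}$ before invoking Gronwall's lemma, one obtains $\E\sup_{t\le T}p_2^{(n)}(t)\le C(T,K)\big(n+p_2^{(n)}(0)\big)$, hence, by \eqref{eq:8momentbound}, $A_T:=\sup_n\E\sup_{t\le T}\langle\mu^{(n)}_t,x^2\rangle<\infty$. Given $\eps>0$, set $L:=A_T/\eps$ and pick $R_k\uparrow\infty$ with $R_k^2\ge kL$. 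On the event $\{\sup_{t\le T}\langle\mu^{(n)}_t,x^2\rangle\le L\}$, which has probability at least $1-\eps$ uniformly in $n$ by Markov's inequality, one has $\mu^{(n)}_t\big([-R_k,R_k]^c\big)\le L/R_k^2\le 1/k$ for all $t\le T$ and all $k$, so that $\mu^{(n)}_t$ stays throughout $[0,T]$ in $\mathcal{K}_{T,\eps}:=\{\nu\in\mathrm{Pr}(\R):\nu([-R_k,R_k]^c)\le 1/k\ \text{for all}\ k\}$, a closed and uniformly tight, hence (Prokhorov) compact, subset of $\mathrm{Pr}(\R)$. This yields (b), and the cited criterion then gives the assertion.

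The only delicate point is (b): what is needed is control of the tail mass of $\mu^{(n)}_t$ that is uniform both in $n$ and over $t\in[0,T]$, which forces the passage from the marginal second-moment bound of Lemma~\ref{lem:pk:bound} to its pathwise (supremum-in-time) version; the rest is a routine application of known tightness criteria.
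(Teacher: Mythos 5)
Your proof is correct and, at the high level, follows the same route the paper indicates (the increment estimate of Lemma~\ref{lem:tight:crit} fed into the Ethier--Kurtz/Jakubowski machinery), but you explicitly supply a step that the paper's one-line ``consequently'' leaves tacit: the compact-containment condition. You are right that this is genuinely needed and not a formality. Tightness of the real processes $(\langle\mu^{(n)}_t,f\rangle)_t$ for every $f\in\mathcal{C}_b^2(\R)$ by itself does \emph{not} give tightness in $\mathcal{C}(\R_+,\mathrm{Pr}(\R))$ --- take $\mu^{(n)}_t\equiv\delta_n$: every $\langle\mu^{(n)}_t,f\rangle=f(n)$ is bounded and constant in $t$, yet $\delta_n$ escapes every compact subset of $\mathrm{Pr}(\R)$. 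So the uniform-in-$(n,t)$ control on tail mass must come from somewhere, and your observation that Lemma~\ref{lem:pk:bound} gives only a \emph{marginal} bound $\E\, p_2^{(n)}(t)\le C(n+p_2^{(n)}(0))$, which must be upgraded to the \emph{pathwise} bound $\E\sup_{t\le T}p_2^{(n)}(t)\le C(n+p_2^{(n)}(0))$ via Doob (or BDG) on the martingale part $M_2^{(n)}$ before applying Markov and Prokhorov, is exactly the right fix. One can check the upgrade goes through: $\langle M_2^{(n)}\rangle_T\lesssim n^{-1}\int_0^T(p_2^{(n)}+p_4^{(n)})\,ds$ and Lemma~\ref{lem:pk:bound} together with \eqref{eq:8momentbound} give $\E\langle M_2^{(n)}\rangle_T=O(1)$, hence $\E\sup_{t\le T}|M_2^{(n)}(t)|=O(1)$, and plugging this into the drift estimate already derived in Lemma~\ref{lem:pk:bound} yields $\sup_n\E\sup_{t\le T}\langle\mu^{(n)}_t,x^2\rangle<\infty$. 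Your set $\mathcal{K}_{T,\eps}$ is closed (since $\nu\mapsto\nu([-R_k,R_k]^c)$ is lower semicontinuous) and uniformly tight, hence compact, and ingredient (a) with $\mathcal{D}=\mathcal{C}_b^2(\R)$ is a measure-determining class closed under addition. In short: same proof strategy as the paper, but your version actually closes the argument; the paper's proof as printed quietly assumes the compact-containment half of the criterion it cites.
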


\subsection{Characterization of the limits as the solution to an integral equation}
Let us assume that 
$f\in\mathcal{C}^2_c(\R)$ and let $r>0$ such that $\supp(f)\subseteq(-r,r)$. Recall that we now additionally assume that $g_n^2\to g^2$, $h_n^2\to h^2$ and $b_n/n\to b$ locally uniformly on $\R$. From Proposition \ref{prop:tightness} we know that the family $\{(\mu_{t}^{(n)})_{t\geq0}:n\geq1\}$ is relatively compact. Let us take a subsequence $\{(\mu_{t}^{(n_{k})})_{t\geq0}:k\geq1\}$ converging weakly to $(\mu_{t})_{t\geq0}$. Since \eqref{eq:EM:SDE} holds with $n$ replaced by $n_k$, this together with Propositions \ref{prop:conv_martin} and \ref{prop:conv-sec-der} imply
\begin{eqnarray*}
  \lim_{k\to\infty}\frac{2}{n_k^{3/2}}\sum_{i=1}^{n_k}\int_0^tf'(\lambda^{(n_k)}_i)g_{n_k}(\lambda_i^{(n_k)})h_{n_k}(\lambda_i^{(n_k)})d\nu_i&=&0,\quad a.s.\/,\\
	\lim_{k\to \infty}\frac{1}{n_k^2}\sum_{i=1}^{n_k}\int_0^tf''(x)G_{n_k}(x,x)\mu_s^{(n_k)}(dx)ds &=& 0\/,\quad a.s.\/,
\end{eqnarray*}
it is enough to deal with the third and fifth component in the RHS of \eqref{eq:EM:SDE}. First, we will show that 
\begin{equation*}
   \frac{1}{n_k}\int_0^t \int_{\R}f'(x)b_{n_k}(x)\mu_s^{(n_k)}(dx)ds \to \int_0^t \int_{\R}f'(x)b(x)\mu_s(dx)ds,
	\end{equation*}
	as $k\to \infty$. It follows from the fact that $f'(x)b_{n_k}(x)/n_k$ is bounded continuous function tending uniformly (on $\R$) to $f'(x)b(x)$, since $b_n/n$ is locally uniformly convergent and $f'$ has a compact support. This together with the weak convergence of $(\mu_t^{(n_k)})$ gives the result. 
	
		For the other term we note that we can write the expression
		\begin{equation*}
		 \int_0^t\int_{\R^2}\frac{f'(x)-f'(y)}{x-y}G_{n_k}(x,y)\mu_s^{(n_k)}(dx)\mu_s^{(n_k)}(dy)ds - \int_0^t\int_{\R^2}\frac{f'(x)-f'(y)}{x-y}G(x,y)\mu_s(dx)\mu_s(dy)ds
		\end{equation*}
		as the sum $A_1(n_k)+A_2(n_k)+B(n_k)$, where
		\begin{align*}
		 A_1(n_k) &=  \int_0^t\int_{|x|+|y|<R}\frac{f'(x)-f'(y)}{x-y}(G_{n_k}(x,y)-G(x,y))\mu_s^{(n_k)}(dx)\mu_s^{(n_k)}(dy)ds\/,\\
		A_2(n_k) &=  \int_0^t\int_{|x|+|y|\geq R}\frac{f'(x)-f'(y)}{x-y}(G_{n_k}(x,y)-G(x,y))\mu_s^{(n_k)}(dx)\mu_s^{(n_k)}(dy)ds\/,
		\end{align*}
		and $B(n_k)$ is equal to 
		\begin{equation*}
		 \int_0^t\int_{\R^2}\frac{f'(x)-f'(y)}{x-y}G(x,y)\mu_s^{(n_k)}(dx)\mu_s^{(n_k)}(dy)-\int_0^t\int_{\R^2}\frac{f'(x)-f'(y)}{x-y}G(x,y)\mu_s(dx)\mu_s(dy)ds\/.
		\end{equation*}
		Here $R>2r$. We use \eqref{eq:growth} to get
		\begin{equation*}
		|G_{n_k}(x,y)-G(x,y)|\leq 2K(1+|x|+|y|)(\sup_{x\leq R}\|g_{n_k}^2(x)-g^2(x)\|+\sup_{x\leq R}\|h_{n_k}^2(x)-h^2(x)\|)\/,
		\end{equation*}
		whenever $|x|+|y|\leq R$. Since $f\in\mathcal{C}_c^2(\R)$ we can find a constant $c_1=c_1(K,f)>0$ such that 
		\begin{equation*}
		 \sup_{|x|+|y|\leq R}\Bigg|\frac{f'(x)-f'(y)}{x-y}(G_{n_k}(x,y)-G(x,y))\Bigg|\leq c_1(\sup_{x\leq R}\|g_{n_k}^2(x)-g^2(x)\|+\sup_{x\leq R}\|h_{n_k}^2(x)-h^2(x)\|)
		\end{equation*}
		and consequently
		\begin{equation}
		  \label{eq:A1}
	A_1(n_k) \leq c_1t(\sup_{x\leq R}\|g_{n_k}^2(x)-g^2(x)\|+\sup_{x\leq R}\|h_{n_k}^2(x)-h^2(x)\|)\/.
		\end{equation}
		Using the fact that $\supp(f)\in(-r,r)$ together with the fact that $R>2r$ we obtain
	\begin{align*}
	c_2:=\sup_{x\in\R,|y|> R/2}\Bigg|\frac{f'(x)-f'(y)}{x-y}(G_{n_k}(x,y)-G(x,y))\Bigg|= \sup_{|x|\leq r, |y|> R/2}\Bigg|\frac{f'(x)}{x-y}(G_{n_k}(x,y)-G(x,y))\Bigg|<\infty.
	\end{align*}
	Therefore, by symmetry, we get
	\begin{equation}
	\label{eq:A2}
	A_2(n_k)\leq 2c_2 \int_0^t\sup_{k\in\N}\mu_s^{(n_k)}\left(|y|>R/2\right)ds.
	\end{equation}
	Collecting \eqref{eq:A1} and \eqref{eq:A2}  together with the fact that the sequence of measures $\{(\mu^{(n_k)}_t)_{t\geq 0}: k\in\N\}$ is tight we obtain
	\begin{align*}
	\limsup_{k\to\infty}|A_1(n_k)+A_2(n_k)|\leq 2c_2 \int_0^t\lim_{R\to\infty}\sup_{k\in\N}\mu_s^{(n_k)}\left(|y|>R/2\right)ds=0.
	\end{align*}
	Proceeding as previously, we can obtain
		\begin{align*}
	\sup_{(x,y)\in\R^2}\Bigg|\frac{f'(x)-f'(y)}{x-y}G(x,y)\Bigg|\leq c_1(\sup_{x\leq R}\|g^2(x)\|+\sup_{x\leq R}\|h^2(x)\|)+2\sup_{|x|\leq r, |y|> R/2}\Bigg|\frac{f'(x)}{x-y}G(x,y)\Bigg|<\infty.
	\end{align*}
	Hence, using the weak convergence of the sequence of measures $\{(\mu^{(n_k)}_t)_{t\geq 0}: k\in\N\}$
	to $(\mu_t)_{t\geq0}$, we obtain by the Dominated Convergence Theorem
	\begin{align*}
	\lim_{k\to\infty}B(n_k)=0.
	\end{align*}	
	These arguments imply that for any $t>0$
	\begin{eqnarray*}
	   \int_0^t\int_{\R^2}\frac{f'(x)-f'(y)}{x-y}G_{n_k}(x,y)\mu_s^{(n_k)}(dx)\mu_s^{(n_k)}(dy)ds \to \int_0^t\int_{\R^2}\frac{f'(x)-f'(y)}{x-y}G(x,y)\mu_s(dx)\mu_s(dy)ds\/.
	\end{eqnarray*}

Note that we can get rid of the additional assumption that $f$ has a compact support by approximation. However, we will generalize it to even broader class of functions $f$ (see Remark \ref{rem:classf}) in the next section. Finally, the last part of Theorem \ref{thm:main} is obvious. 


\section{Moments}
\label{sec:moments}

This section is devoted to study the properties of the moments of the empirical measure-valued processes and their convergence, i.e. we provide the proof of Theorem \ref{thm:moments}. We begin with its first part. 

\subsection{Finiteness of the moments} Following the assumptions of Theorem \ref{thm:moments}, we consider $(\mu_t)_{t\geq 0}$ as a limit in $\mathcal{C}(\R_+,\textrm{Pr}(\R))$ of a subsequence $(\mu_t^{(n_i)})_{t\geq 0}$, $i\in\N$. We assume that \eqref{eq:growth} holds for the continuous coefficients $g_n$, $h_n$, $b_n/n$ such that  $g_n^2$, $h_n^2$, $b_n/n$ are uniformly convergent. The second part of Theorem \ref{thm:main} implies that $(\mu_t)_{t\geq 0}$ solves the integral equation \eqref{limit}. 

First we show that \eqref{eq:momentbounds:0} implies \eqref{eq:momentbounds:t}, i.e. the uniform boundedness of $(2k)$-moments of $\mu_0^{(n_i)}$ gives the uniform boundedness of $(2k)$-moments of the limiting distributions $\mu_t$. Let us write $f_k(x)=|x|^k$ and consider $\varphi\in\mathcal{C}_c^2(\R)$ such that $\varphi\geq 0$. Then, for $0\leq t<T$ with fixed $T>0$, we have
\begin{eqnarray*}
   \langle\mu_t,\varphi\cdot f_{2k}\rangle &=& \lim_{i}\langle \mu_t^{(n_i)},\varphi\cdot f_{2k}\rangle \leq \limsup_{i}\langle \mu_t^{(n_i)}, f_{2k}\rangle \leq\limsup_i \frac{1}{n_i} \E p_{2k}^{(n_i)}(t)\\
	&\leq& \limsup_{i} C_{k,T}  \left(1+\frac{1}{n_i}p_{2k}^{(n_i)}(0)\right)\/,
\end{eqnarray*}
where $C_{k,T}$ does not depend on $i$. The last estimate is taking from Lemma \ref{lem:pk:bound}. Since the right-hand side does not depend on $\varphi$, we can conclude that
\begin{equation}
  \label{eq:moments:bound1}
  \langle \mu_t,f_{2k} \rangle \leq \tilde{C}_{k,T}  \/,\quad t\leq T\/,
\end{equation}
whenever \eqref{eq:momentbounds:0} holds. Note that, in particular, it implies that \eqref{limit} holds for every function $f\in\mathcal{C}^2(\R)$ such that $f$, $f'$ and $f''$ have sub-polynomial growth at infinity, i.e. there exists $k\geq 1$
\begin{equation}
\label{eq:f:subpolynomial}
   |f(x)|+|f'(x)|+|f''(x)|\leq K(1+|x|^k)\/,\quad x\in\R\/,
\end{equation}
 whenever \eqref{eq:8momentbound} holds.

\subsection{Convergence of the moments} Assuming \eqref{eq:momentbounds:0}, we fix $m\in\{0,\ldots,2k-1\}$ and write
\begin{equation}
\int_{\R}x^m \mu_t^{(n_i)}(dx) = \int_{|x|<\alpha}x^m\mu^{(n_i)}_t(dx)+\int_{|x|\geq\alpha}x^m\mu^{(n_i)}_t(dx)\/,
\end{equation}
where the first integral in the right-hand side is equal to 
\begin{equation*}
\int_0^{\alpha}\mu^{(n_i)}_t\{x\in\R:r<x^m<\alpha\}dr-\int_{-\alpha}^0\mu^{(n_i)}_t\{x\in\R:-\alpha<x^m<r\}dr\/.
\end{equation*}
Here $\alpha=\alpha(t)>0$ is chosen to have $\mu_t(\{\alpha\})=0$. Since the sequence $(\mu^{(n_i)}_t)_{i\geq1}$ converges weakly to $\mu_t$, the Lebesgue's Dominated Convergence Theorem gives
\begin{equation*}
\lim_{i\to\infty}\int_0^{\alpha}\mu^{(n_i)}_t\{x\in\R:r<x^m<\alpha\}dr = \int_0^{\alpha}\mu_t\{x\in\R:r<x^m<\alpha\}dr\/,
\end{equation*}
where we used the fact that the set $\{r\in[0,\alpha]: \mu_t(\{r\})>0\}$ has the Lebesgue measure zero. The same arguments give 
\begin{equation*}
\lim_{i\to\infty}\int_{-\alpha}^0\mu^{(n_i)}_t\{x\in\R:-\alpha<x^m<r\}dr = \int_{-\alpha}^0\mu_t\{x\in\R:-\alpha<x^m<r\}dr
\end{equation*}
and hence
\begin{equation}
\label{con_mom_1}
\lim_{i\to\infty}\int_{|x|<\alpha}x^m\mu^{(n_i)}_t(dx)=\int_{|x|<\alpha}x^m\mu_t(dx)\/.
\end{equation}
On the other hand, since
\begin{equation*}
\Bigg|\sup_{i}\int_{|x|\geq\alpha}x^m\mu^{(n_i)}_t(dx)\Bigg|\leq \frac{1}{\alpha^{2k-m}}\sup_{i}\int_{\R}x^{2k}\mu^{(n_i)}_t(dx)\/,
\end{equation*}
we can use \eqref{eq:momentbounds:t} to see that
\begin{equation}
\label{con_mom_2}
\lim_{\alpha\to\infty}\sup_{i}\int_{|x|\geq\alpha}x^m\mu^{(n_i)}_t(dx)=0\/.
\end{equation}
Therefore, using \eqref{con_mom_1} we obtain that
\begin{align*}
\limsup_{i\to\infty}\int_{\R} x^m \mu_t^{(n_i)}(dx) &=\limsup_{i\to\infty}\int_{|x|<\alpha}x^m\mu^{(n_i)}_t(dx)+\limsup_{i\to\infty}\int_{|x|\geq\alpha}x^m\mu^{(n_i)}_t(dx)\\
&=\int_{|x|<\alpha}x^m\mu_t(dx)+\limsup_{i\to\infty}\int_{|x|\geq\alpha}x^m\mu^{(n_i)}_t(dx).
\end{align*}
Finally, taking $\alpha\to\infty$, by using the Monotone Convergence Theorem together with \eqref{con_mom_2} we obtain
\begin{align*}
\limsup_{i\to\infty} \langle \mu_t^{(n_i)}, x^m\rangle=\int_{\R} x^m\mu_t(dx)\/.
\end{align*}
The similar computations give that 
\begin{equation*}
   \liminf_{i\to\infty}\int_{\R} x^m \mu_t^{(n_i)}(dx)=\int_{\R} x^m \mu_t(dx)\/,
\end{equation*}	
which ends the proof.

\subsection{Characterization by the moments} To prove the second part of Theorem \ref{thm:moments}, we assume that $(\mu_t)_{t\geq}$ is a solution to \eqref{limit}, where 
\begin{equation}
   \label{eq:moments:assump}
  g^2(x)+h^2(x)\leq K(1+|x|)\/,\quad x\in \R\/,\quad b(x)\leq K\/,\quad x\geq 0\/,
\end{equation}
and $b$ is non-negative on $(-\infty,0))$. We denote
\begin{equation}
   a_k(t) = \int_{\R} |x|^{k}\mu_t(dx)\/,\quad k=1,2,\ldots\/.
\end{equation} 
Since we consider $\mu_0$ which has an analytic characteristic function on a neighborhood of the origin, we have
\begin{equation}\label{D_bound}
   E: = \limsup_{k\in\N}\sqrt[k]{a_k(0)/k!}<\infty\/. 
\end{equation}
Using mathematical induction on $k$, we will show that 
\begin{equation}
\label{ref:akt:est}
   \sup_{t\leq T} a_k(t)\leq (k-1)!\, C^{2k-1} \/,
\end{equation}
which implies that the measure $\mu_t$ has characteristic functions being analytic a neighborhood of the origin and the measure is determined by its moments. Here
\begin{equation}
   \label{eq:CT}
  C = C(T,K) = \max\{1+\tilde{C}_{2,T},E,2+2KT+48T K^2\}\/,
\end{equation}
where $\tilde{C}_{2,T}$ is the constant appearing in \eqref{eq:moments:bound1}. Indeed, we have $a_0(t)=1$ and $a_1(t)\leq 1+a_2(t)\leq 1+\tilde{C}_{2,T}$ by \eqref{eq:moments:bound1}. Now assume that \eqref{ref:akt:est} holds for every $i\leq k$, where $k\geq 2$. Since $f_k$ is a function in $\mathcal{C}_2(\R)$ (for $k\geq 2$), such that \eqref{eq:f:subpolynomial} holds, we can use \eqref{limit} to write
\begin{eqnarray*}
   a_k(t) &=& \langle\mu_0, f_k\rangle +\int_{0}^{t}ds\int_{\R}b(x)f'_k(x)\mu_s(dx)+\frac{\beta}{2} \int_{0}^{t}ds\int_{\R^{2}}
	\frac{f^{\prime}_k(x)-f^{\prime}_k(y)}{x-y}G(x,y)\mu_{s}(dx)\mu_{s}(dy)\/.
\end{eqnarray*}
Since $f'_k(x)\leq 0$, $b(x)\geq 0$ on $(-\infty,0]$ and \eqref{eq:moments:assump} holds, we can use the inductive hypothesis to get
\begin{eqnarray*}
   \int_{\R}b(x)f'_k(x)\mu_s(dx) &\leq& K\, k\int_{(0,\infty)} x^{k-1}\mu_s(dx)
	\leq K\, k a_{k-1}(s) \leq K\, k (k-2)!C^{2k-3}\/.
\end{eqnarray*}
To deal with the next integral we use the following estimate
\begin{eqnarray*}
   \left|\frac{f_k'(x)-f_k'(y)}{x-y}\right|\leq k\sum_{i=0}^{k-2}|x|^{i}|y|^{k-2-i}\/,\quad x,y\in\R\/.
\end{eqnarray*}
For $k\in2\N$ or $xy\geq 0$ this is obvious. For $k\in 2\N+1$ and $xy<0$ it follows from 
\begin{eqnarray*}
   \left|\frac{f_k'(x)-f_k'(y)}{x-y}\right| &=& k\,\left|\frac{x^{k-1}+y^{k-1}}{x-y}\right|\leq k (|x|^{k-2}+|y|^{k-2})\/.
\end{eqnarray*}
Thus we can proceed in the following way
\begin{eqnarray*}
\int_{\R^{2}}\frac{f^{\prime}_k(x)-f^{\prime}_k(y)}{x-y}G(x,y)\mu_{s}(dx)\mu_{s}(dy) &\leq& 2k \sum_{i=0}^{k-2}\int_{\R}g^2(x)|x|^{i}\mu_s(dx)\int_{\R}h^2(x)|x|^{k-2-i}\mu_s(dx)\\
&\leq &2K^2\,k \sum_{i=0}^{k-2}(a_i(s)+a_{i+1}(s))(a_{k-2-i}(s)+a_{k-1-i}(s))\/,
\end{eqnarray*}
where the last inequality follows from \eqref{eq:moments:assump}. Since the right-hand side of \eqref{ref:akt:est} is increasing in $k$, 
we have 
\begin{equation*}
   a_i(s)+a_{i+1}(s)\leq 2i!C^{2i+1}\/,\qquad a_{k-2-i}(s)+a_{k-1-i}(s)\leq 2(k-2-i)!C^{2k-3-2i}\/,
\end{equation*}
and consequently the above-given integral over $\R^2$ is estimated from above by 
\begin{eqnarray*}
8K^2\,k\, \sum_{i=0}^{k-2}C^{2i+1}C^{2k-3-2i}i!(k-2-i)! \leq  8K^2 C^{2k-2}\,k(k-2)!\,\sum_{i=0}^{k-2}\frac{1}{\binom{k-2}{i}}\/.
\end{eqnarray*}
Taking into account the fact that $\sum_{i=0}^n1/\binom{n}{i}$ bounded by $3$ we finally obtain
\begin{eqnarray*}
\int_{\R^{2}}\frac{f^{\prime}_k(x)-f^{\prime}_k(y)}{x-y}G(x,y)\mu_{s}(dx)\mu_{s}(dy)
&\leq & 24K^2 \,k(k-2)!\,C^{2k-2}\leq 48K^2 (k-1)! C^{2k-2}\/.
\end{eqnarray*}
Collecting all together and using \eqref{eq:CT} we get
\begin{eqnarray*}
  a_k(t) &\leq& a_k(0)+2KT(k-1)!\,C^{2k-3}+48K^2T\, (k-1)!\, C^{2k-2} \leq (k-1)!\left(2+2KT+48T K^2\right)C^{2k-2}\\
				 &\leq& (k-1)!C^{2k-1}\/,
\end{eqnarray*}
for every $t\leq T$. Here we have used the fact that 
\begin{equation*}
 a_k(0)\leq E^k k!\leq  (k-1)! k C^k\leq 2(k-1)! C^{2k-2}\/,
\end{equation*}
where the last inequality holds since $C>2$. This ends the proof. 


\subsection{Cauchy transforms} We end this section by providing description of the Cauchy transforms of the families of laws $(\mu_t)_{t\geq 0}$ characterized as solutions to \eqref{limit}. This will be used in Section \ref{sec:Free} to establish the links between the distributions obtained as limits of general matrix flows and free diffusions. 

Let $\C^+=\{z\in\C: \Im(z)>0\}$ and denote
\begin{align*}
f_z(x):=\frac{1}{(x-z)},\qquad\text{for all $x\in\R, z\in\C^+$}.
\end{align*}
Using \eqref{limit} we obtain that
\begin{align}\label{ct1}
\frac{d}{dt}\langle f_z,\mu_t\rangle=-\int_{\R}\frac{b(x)}{(x-z)^2}\mu_t(dx)+\frac{1}{2}\int_{\R^{2}}
\frac{x-z+y-z}{(x-z)^2(y-z)^2}G(x,y)\mu_{t}(dx)\mu_{t}(dy).
\end{align}
Now noting that $G(x,y)=g^2(x)h^2(y)+g^2(y)h^2(x)$ for $x,y\in\R$ we get
\begin{align*}
\int_{\R^{2}}\frac{x-z+y-z}{(x-z)^2(y-z)^2}G(x,y)\mu_{t}(dx)\mu_{t}(dy)&=2\int_{\R}\frac{g^2(x)}{(x-z)}\mu_t(dx)\int_{\R}\frac{h^2(y)}{(y-z)^2}\mu_t(dy)\\&+2\int_{\R}\frac{g^2(x)}{(x-z)^2}\mu_t(dx)\int_{\R}\frac{h^2(y)}{(y-z)}\mu_t(dy).
\end{align*}
Hence, using the above computation in \eqref{ct1} 
\begin{align}\label{ct2}
\frac{d}{dt}\langle f_z,\mu_t\rangle&=-\int_{\R}\frac{b(x)}{(x-z)^2}\mu_t(dx)+\int_{\R}\frac{g^2(x)}{(x-z)}\mu_t(dx)\int_{\R}\frac{h^2(y)}{(y-z)^2}\mu_t(dy)
\notag\\&+\int_{\R}\frac{g^2(x)}{(x-z)^2}\mu_t(dx)\int_{\R}\frac{h^2(y)}{(y-z)}\mu_t(dy).
\end{align}	


\section{Wigner ensemble}
\label{sec:Wigner}
Using Theorem \ref{thm:main} we prove the following result, which can be seen as continuous time generalization of \textit{the classical Global semi-circular law}. The result proved in some special cases by Wigner in \cite{bib:Wigner:1958} and in the general form by Pastur  \cite{bib:Pastur:1973} states that for \textit{the Wigner matrice}s $M$ and a given interval $I$ one has
\begin{equation*}
    \lim_{n\to \infty} \frac{1}{n}N_I[\frac{1}{\sqrt{n}}M_n] = \int_I \rho^{sc}(x)dx\/.
\end{equation*}
Here $\rho^{sc}(x)=(2\pi)^{-1}\sqrt{4-x^2}\ind_{\{|x|\leq 2\}}$ is the density function of \textit{the semi-circular law} and $N_I[X]=\|\{1\leq i\leq n: \lambda_i(X)\in I\}\|$ is \textit{the eigenvalue counting function}. Recall that $n\times n$ Wigner Hermitian matrix is defined as a random Hermitian matrix $M_n=(m_{ij})_{1\leq i,j\leq n}$ such that $m_{ij}$ for $i< j$ are jointly independent and $m_{ji}=\overline{m_{ij}}$. Additionally it is required that $\ex m_{ij}=0$, $\ex|m_{ij}|^2=1$ for $i<j$ and $\ex m_{ii}=0$, $\ex m_{ii}^2=\sigma^2>0$. It is also sometimes assumed that $m_{ij}$, $i<j$ are identically distributed as well as $m_{ii}$. 

\begin{theorem}
\label{thm:Wigner1}
   Let $g_n$, $h_n$ and $b_n$ be continuous functions and $(X_t^{(n)})$ be a solution to 
	$$
	  dX_t^{(n)} = g_n(X_t^{(n)})dW_t^{(n)}h_n(X_t^{(n)})+h_n(X_t^{(n)})d(W_t^{(n)})^*g_n(X_t^{(n)}) +\frac{1}{n}b_n(X_t^{{n}})dt\/,\quad X_0^{(n)}\in\mathcal{H}_n\/,
	$$
	where $W_t^{(n)}= n^{-1/2}W_t$ with $W_t$ being a $n\times n$ complex matrix Brownian motion. If \eqref{eq:growth} holds, $g_n^2(x)\to 1/4$, $h_n^2(x)\to 1$ and $b_n(x)/n\to 0$, locally uniformly as $n\to \infty$, then the related empirical measure-valued process
	$$
	  \mu_t^{(n)}(dx) = \frac{1}{n}\sum_{i=1}^n \delta_{\lambda_i^{(n)}}(dx)
	$$
	tends in probability in the space $\mathcal{C}(\R_+,\textrm{Pr}(\R))$ to the family of semi-circular laws
	$$
	  \rho_t^{sc}(dx) = \frac{1}{2\pi t} \sqrt{4t-x^2}\ind_{\{|x|\leq 2\sqrt{t}\}}dx\/,
	$$
	whenever $\mu_0^{(n)}\Rightarrow \delta_0$ such that \eqref{eq:8momentbound} holds.
\end{theorem}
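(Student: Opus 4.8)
The plan is to apply Theorem~\ref{thm:main}, which reduces the statement to the uniqueness and the identification of the solution of the limiting integral equation \eqref{limit}. First I would check the hypotheses of Theorem~\ref{thm:main}: the growth bound \eqref{eq:growth} and the eighth-moment bound \eqref{eq:8momentbound} are assumed, and by hypothesis $g_n^2\to g^2\equiv 1/4$, $h_n^2\to h^2\equiv 1$ and $b_n/n\to b\equiv 0$ locally uniformly. Hence Theorem~\ref{thm:main} gives that $\{(\mu_t^{(n)})\}$ is tight and that every subsequential weak limit $(\mu_t)$ is a continuous $\mathrm{Pr}(\R)$-valued path solving \eqref{limit} with $G(x,y)=g^2(x)h^2(y)+g^2(y)h^2(x)\equiv 1/2$ and $b\equiv 0$; moreover, continuity of the evaluation at $t=0$ together with $\mu_0^{(n)}\Rightarrow\delta_0$ forces $\mu_0=\delta_0$. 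Since we are in the complex case $\beta=2$, so $(\beta/2)G\equiv 1/2$ and \eqref{limit} becomes the free heat equation
\[
   \langle\mu_t,f\rangle = f(0)+\frac12\int_0^t\!\!\int_{\R^2}\frac{f'(x)-f'(y)}{x-y}\,\mu_s(dx)\,\mu_s(dy)\,ds,\qquad f\in\mathcal{C}_b^2(\R).
\]

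The heart of the argument is to show that this equation has a unique solution, for which I would use the second part of Theorem~\ref{thm:moments}. Its hypotheses hold here: $g^2+h^2=5/4\le K(1+|x|)$, $b\equiv 0$ is bounded on $[0,\infty)$ and non-negative on $(-\infty,0)$, and $\mu_0=\delta_0$ has the entire characteristic function $\xi\mapsto 1$. Therefore, for every $t>0$, $\mu_t$ has a characteristic function analytic near the origin and is uniquely determined by its moments. Setting $a_m(t)=\langle\mu_t,x^m\rangle$ and using \eqref{limit} with $f(x)=x^m$ — legitimate for such polynomially growing $f$, exactly as in the proof of Theorem~\ref{thm:moments} — yields the closed triangular system $a_0\equiv 1$, $a_1\equiv 0$, $a_m(0)=0$ for $m\ge 1$, and
\[
   a_m(t)=\frac m2\int_0^t\sum_{l=0}^{m-2}a_l(s)\,a_{m-2-l}(s)\,ds,\qquad m\ge 2.
\]
An induction on $m$ determines all the moment functions $a_m(\cdot)$ uniquely, and since each $\mu_t$ is determined by its moments, the solution of the displayed equation is unique.

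It then remains to identify this unique solution with $(\rho_t^{sc})$. I would verify directly that $\rho_t^{sc}$ solves \eqref{limit}: its odd moments vanish and $\langle\rho_t^{sc},x^{2k}\rangle=C_k\,t^k$, with $C_k$ the $k$-th Catalan number, and the recursion $C_k=\sum_{j=0}^{k-1}C_jC_{k-1-j}$ is exactly what is needed to check the moment system above. Alternatively, one verifies via the Cauchy-transform equation \eqref{ct2} (with $g^2\equiv 1/4$, $h^2\equiv 1$, $b\equiv 0$) that the Cauchy transform $G_t(z)=(z-\sqrt{z^2-4t})/(2t)$ of $\rho_t^{sc}$ satisfies the associated complex Burgers equation $\partial_tG_t=-G_t\,\partial_zG_t$, whose solution started from the transform of $\delta_0$ is this $G_t$. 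Since \eqref{limit} thus has the unique solution $(\rho_t^{sc})$, the last assertion of Theorem~\ref{thm:main} yields convergence of $(\mu_t^{(n)})$ to $(\rho_t^{sc})$ in $\mathcal{C}(\R_+,\mathrm{Pr}(\R))$, and as the limit is deterministic this convergence holds in probability.

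The main obstacle is the uniqueness step; the rest is bookkeeping on top of Theorems~\ref{thm:main} and \ref{thm:moments}. Uniqueness is accessible here — in contrast to the Wishart case treated later — precisely because $b\equiv 0$ and $G$ is constant, so inserting monomials into \eqref{limit} closes into a triangular system of ODEs for the moments, and Theorem~\ref{thm:moments} supplies exactly the moment-determinacy needed to upgrade uniqueness of the moments to uniqueness of the measure.
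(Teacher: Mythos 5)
Your proposal is correct and follows essentially the same route as the paper: apply Theorem~\ref{thm:main} to reduce to uniqueness of the limiting equation~\eqref{eq:Wigner:mu}, invoke the second part of Theorem~\ref{thm:moments} to get moment determinacy, and then show that inserting monomials yields a triangular moment recursion whose unique solution gives the Catalan moments of $\rho_t^{sc}$. The paper cites the Cauchy-transform argument of Rogers and Shi only as an alternative, exactly as you do; the moment computation is the primary argument in both.
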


\begin{proof}
  In view of Theorem \ref{thm:main} it is enough to show that the equation
	\begin{equation}
	\label{eq:Wigner:mu}
	  	\langle\mu_{t},f\rangle=f(0)+ \frac{1}{2}\int_{0}^{t}ds\int_{\R^{2}}
	\frac{f^{\prime}(x)-f^{\prime}(y)}{x-y}\mu_{s}(dx)\mu_{s}(dy),
	\end{equation}
	has a unique solution $\rho_t^{sc}(dx)$. In fact it was done in \cite{bib:RogersShi:1993} by applying the Cauchy transform and showing that the obtained partial differential equation has a unique solution. However, we can get the result by looking at the moments of solutions as was done by Chan in \cite{bib:Chan:1993}. Note that the second part of Theorem \ref{thm:moments} gives that any solution to \eqref{eq:Wigner:mu} is uniquely determined by its moments. Moreover, it is easy to see that 
	$$
	   m_0(t) = \mu_t(\R) = 1\/,\quad m_1(t) = \int_{\R} x\mu_t(dx) = 0
	$$
	and for every $k\geq 2$ we have
	\begin{eqnarray*}
	   m_k(t):= \int_{R} x^k \mu_t(dx) &=& \frac{k}{2}\int_{0}^{t}ds\int_{\R^{2}}\sum_{i=0}^{k-2}x^{i}y^{k-2-i}\mu_{s}(dx)\mu_{s}(dy)
		= \frac{k}{2}\sum_{i=0}^{k-2}\int_0^t m_i(s)m_{k-2-i}(s)ds\/.
	\end{eqnarray*}
	First, it shows that all the solutions have the same moments, since $m_k(t)$ is given in terms of $m_i(t)$, $i\leq k-2$ and $m_0(t)=1$, $m_1(t)=0$. Additionally, by simple induction, one can prove that 
	$$
	  m_{2n+1}(t)=0\/,\quad m_{2n}(t) = \frac{t^n}{n+1}\binom{2n}{n}\/,\quad t\geq 0\/,
	$$
	and it agrees with the moments of the centered semi-circular law with variance $t$. The result follows.
\end{proof}
\begin{remark}
  Note that \eqref{eq:8momentbound} holds, for example, if the distributions $\mu_0^{(n)}(dx)$ have uniformly bounded supports.
\end{remark}

We also provide the real-valued analogue of the result
\begin{theorem}
\label{thm:Wigner2}
   Let $g_n$, $h_n$ and $b_n$ be continuous functions and $(X_t^{(n)})$ be a solution to 
	$$
	  dX_t^{(n)} = g_n(X_t^{(n)})dB_t^{(n)}h_n(X_t^{(n)})+h_n(X_t^{(n)})d(B_t^{(n)})^Tg_n(X_t^{(n)}) +\frac{1}{n}b_n(X_t^{{n}})dt\/,\quad X_0^{(n)}\in\mathcal{S}_n\/,
	$$
	where $B_t^{(n)}= n^{-1/2}B_t$ with $B_t$ being a $n\times n$ real-valued matrix Brownian motion. If \eqref{eq:growth} holds, $g_n^2(x)\to 1/4$, $h_n^2(x)\to 1$ and $b_n(x)/n\to 0$, locally uniformly as $n\to \infty$, then the related empirical measure-valued process
	$$
	  \mu_t^{(n)}(dx) = \frac{1}{n}\sum_{i=1}^n \delta_{\lambda_i^{(n)}}(dx)
	$$
	tends in probability in the space $\mathcal{C}(\R_+,\textrm{Pr}(\R))$ to the family of semi-circular laws
	$$
	  \rho_t^{sc}(dx) = \frac{1}{\pi t} \sqrt{2t-x^2}\ind_{\{|x|\leq \sqrt{2t}\}}dx\/,
	$$
	whenever $\mu_0^{(n)}\Rightarrow \delta_0$ such that \eqref{eq:8momentbound} holds.
\end{theorem}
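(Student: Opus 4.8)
The argument runs exactly parallel to the proof of Theorem \ref{thm:Wigner1}, now with $\beta=1$. The plan is to apply Theorem \ref{thm:main} with the constant coefficients $g^2(x)\equiv 1/4$, $h^2(x)\equiv 1$ and $b(x)\equiv 0$, so that $G(x,y)=g^2(x)h^2(y)+g^2(y)h^2(x)\equiv 1/2$. The hypotheses \eqref{eq:growth}, the local uniform convergence $g_n^2\to 1/4$, $h_n^2\to 1$, $b_n/n\to 0$, the initial condition $\mu_0^{(n)}\Rightarrow\delta_0$, and \eqref{eq:8momentbound} are all assumed, so by Theorem \ref{thm:main} it suffices to show that the limiting equation \eqref{limit}, which in this case reduces to
\begin{equation*}
  \langle\mu_t,f\rangle = f(0)+\frac14\int_0^t ds\int_{\R^2}\frac{f'(x)-f'(y)}{x-y}\,\mu_s(dx)\mu_s(dy),\qquad f\in\mathcal{C}_b^2(\R),
\end{equation*}
has a unique solution, and that this solution is the family $(\rho_t^{sc})$ from the statement. (Note that the factor is $\beta/2\cdot\frac12=\frac14$ here, against $\frac12$ in \eqref{eq:Wigner:mu}; this is the only structural difference from the complex case.)

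For uniqueness I would invoke the second part of Theorem \ref{thm:moments}: its hypotheses hold since $g^2+h^2\equiv 5/4\leq K(1+|x|)$, $b\equiv0$ is bounded on $[0,\infty)$ and non-negative on $(-\infty,0)$, and the characteristic function of $\mu_0=\delta_0$ is constant, hence analytic near the origin. Consequently every solution $(\mu_t)$ of the displayed equation has all moments finite on compact time intervals and is determined by its moments. Testing the equation against $f(x)=x^k$ (legitimate by Remark \ref{rem:classf}, since all moments are finite) and using $(f'(x)-f'(y))/(x-y)=k\sum_{i=0}^{k-2}x^iy^{k-2-i}$, one obtains $m_0(t)=1$, $m_1(t)=0$ and, for $k\geq2$,
\begin{equation*}
  m_k(t)=\frac{k}{4}\sum_{i=0}^{k-2}\int_0^t m_i(s)\,m_{k-2-i}(s)\,ds,
\end{equation*}
where $m_k(t)=\int_\R x^k\mu_t(dx)$. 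This recursion determines $m_k(t)$ from $m_i(t)$ with $i\leq k-2$, so all solutions share the same moments; a straightforward induction gives $m_{2n+1}(t)\equiv 0$ and $m_{2n}(t)=(t/2)^n\frac{1}{n+1}\binom{2n}{n}$, which are precisely the moments of $\rho_t^{sc}$, the centred semicircle law of variance $t/2$. (Equivalently, the displayed equation is the equation \eqref{eq:Wigner:mu} of Theorem \ref{thm:Wigner1} run at half speed, so its unique solution is obtained from the variance-$t$ family there by the time change $t\mapsto t/2$; substituting the density confirms this equals $\frac{1}{\pi t}\sqrt{2t-x^2}\,\ind_{\{|x|\leq\sqrt{2t}\}}dx$.) The last part of Theorem \ref{thm:main} then upgrades tightness to convergence in probability in $\mathcal{C}(\R_+,\mathrm{Pr}(\R))$.

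There is no genuine obstacle here beyond what was already overcome for Theorem \ref{thm:Wigner1}; the only points needing care are the bookkeeping of the constant $\beta/2=1/2$ — which halves the coefficient and is responsible for the variance being $t/2$ rather than $t$ — and checking that the (degenerate, constant-coefficient, zero-drift) setting does satisfy the hypotheses of the moment-analyticity part of Theorem \ref{thm:moments}, which it does trivially.
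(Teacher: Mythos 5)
Your proof is correct and rests on the same underlying machinery as the paper's: apply Theorem~\ref{thm:main}, then use Theorem~\ref{thm:moments} to get moment-determinacy, and identify the moments. The only difference is cosmetic: the paper's proof is a one-line reduction, observing that the time change $\mu_t(dx)=\nu_{2t}(dx)$ turns the $\beta=1$ integral equation (with prefactor $1/4$) into equation \eqref{eq:Wigner:mu} (with prefactor $1/2$) and then cites Theorem~\ref{thm:Wigner1}, whereas you rerun the moment recursion directly with $\beta=1$ (correctly obtaining $m_{2n}(t)=(t/2)^n\binom{2n}{n}/(n+1)$) and mention the time-change reduction only as an aside. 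Both are valid and amount to the same thing.
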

\begin{proof}
   The result follows from the observation that the simple time scaling $\mu_{t}(dx)=\nu_{2t}(dx)$ transforms the equation
		$$
	  	\langle\nu_{t},f\rangle=f(0)+ \frac{1}{4}\int_{0}^{t}ds\int_{\R^{2}}
	\frac{f^{\prime}(x)-f^{\prime}(y)}{x-y}\nu_{s}(dx)\nu_{s}(dy),
	$$
	into \eqref{eq:Wigner:mu}.
\end{proof}

\begin{remark}
  Note that in both Theorem \ref{thm:Wigner1} and Theorem \ref{thm:Wigner2}, apart from the trivial case $g_n=1$, $h_n=1/2$ and $b_n=0$, the assumption that the entries of $X^{(n)}_t$ are independent (above and on the diagonal) does not hold. It is replaced by the requirement that $X^{(n)}$ is a solution to given matrix-valued SDE, which imposes certain structure of the considered matrices. 
\end{remark}

\begin{remark}
\label{rem:Wigner:diffinit}
 Other generalizations of Theorem \ref{thm:Wigner1} and Theorem \ref{thm:Wigner2} follow by considering different limits $\mu_0(dx)$ of the initial distributions $\mu_t^{(n)}(dx)$. Assuming that the measure $\mu_0(dx)$ fulfills the assumptions of the last part of Theorem \ref{thm:moments}, we can easily show that the uniqueness of a solution to \eqref{eq:Wigner:mu} (with $f(0)=\left<\delta_0,f\right>$ replaced by $\left<\mu_0,f\right>$) holds and the convergence of the empirical measure-valued processes follows. However, the limiting family of measures $\mu_t(dx)$ is then no longer given by the semi-circle law, but still it can be determined by its moments given by the corresponding recursive differential equations. 
\end{remark}


\section{Generalized Wishart and Laguerre ensemble}
\label{sec:Wishart}
Another classical result on the convergence of spectral empirical measure relates to "squared" matrices and the Marchenko-Pastur distribution \cite{bib:MarchenkoPastur:1967}. The time-continuous analogue refers to the classical Laguerre stochastic differential equation 
	\begin{eqnarray*}
	  dX_t = \sqrt{X_t}dW_t+dW_t^*\sqrt{X_t}+mI_ndt\/,\quad X_t\in \mathcal{H}_n\/,
	\end{eqnarray*}
   where $m\in\N$ and $I_n$ stands for the identity matrix in $\mathcal{H}_n$. It is well-known that $X_t = N_t\times N_t^T$ is a solution to above given SDE, where $N_t$ is $n\times m$ complex-valued Brownian matrix. The Laguerre process was studied e.a. by Konig and O'Connel in \cite{bib:KonigOConnel:2001} and Demni in \cite{bib:Demni:2007}. This section is devoted to study in details the following generalization 
	\begin{equation}
	\label{eq:Wishart:general}
	 dX_t = \sqrt{|X_t|}dW_t+dW_t^T \sqrt{|X_t|}+\alpha I_n dt\/,\quad X_t\in \mathcal{H}_n\/,
	\end{equation}
 with $\alpha\in\R$. The appearance of the norms is forced by the fact that a solution does not belong to $\overline{\mathcal{S}_n^{+}}$ for $\alpha<n-1$ and $\alpha\notin\{1,2,\ldots,n-2\}$ (see \cite{bib:gmm2018} for the study of the real-valued case). The real-valued version \eqref{eq:Wishart:general} with $W_t$ replaced by $B_t$, $\mathcal{H}_n$ by $\mathcal{S}_n$ and more general SDEs with $g_n^2(x)\to |x|$, $h_n(x)\to1$, and $b_n(x)/n\to \alpha$ will also be considered. The solutions in this case are known as Wishart processes and were introduced by Bru in \cite{bib:b91}.

Recall that the corresponding integral equations for limits of subsequences of the scaled empirical measures family is 
		\begin{equation} 
		\label{eq:Wishart:integral}
	\langle\mu_{t},f\rangle=\langle \mu_0,f\rangle+\alpha \int_{0}^{t}ds\int_{\R}f'(x)\mu_s(dx)+\frac{\beta}{2} \int_{0}^{t}ds\int_{\R^{2}}
	\frac{f^{\prime}(x)-f^{\prime}(y)}{x-y}(|x|+|y|)\mu_{s}(dx)\mu_{s}(dy),
	\end{equation}
	with $f\in\C^2(\R)$ such that $f,f',f''$ have sub-polynomial growth at infinity. In view of Theorem \ref{thm:main}, the convergence of $\mu_t^{(n)}(dx)$ is related to the uniqueness of solutions to \eqref{eq:Wishart:integral}. As we will see below, it does not hold in full generality. First, recall the following well-known result. 
	
	\medskip
	
	\begin{proposition}
	\label{prop:Wishart:Unique}
	  For every $\alpha\geq 0$ and $\mu_0(dx)=\delta_0(dx)$ there exists a unique solution $(\mu_t)_{t\geq 0}$ of \eqref{eq:Wishart:integral} such that $\mu_t$ is supported in $[0,\infty)$ for every $t\geq 0$. 
	\end{proposition}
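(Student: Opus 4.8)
The plan is to establish the statement as a special case of a uniqueness result for the moment sequence, exactly in the spirit of the proof of Theorem \ref{thm:Wigner1}. Since $\mu_0(dx)=\delta_0(dx)$ trivially has an analytic characteristic function near the origin, $b\equiv\alpha$ is bounded on $[0,\infty)$ and (vacuously) non-negative on $(-\infty,0)$, the second part of Theorem \ref{thm:moments} applies and tells us that any solution $(\mu_t)_{t\geq 0}$ of \eqref{eq:Wishart:integral} which is supported on $[0,\infty)$ is uniquely determined by its moments $m_k(t)=\int_{\R}x^k\mu_t(dx)=a_k(t)$. So it suffices to show that the moments are uniquely pinned down by the equation. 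First I would apply \eqref{eq:Wishart:integral} with $f(x)=x^k$ (justified by Remark \ref{rem:classf}, since $x^k$ satisfies \eqref{eq:f:subpolynomial}), using $\frac{f'(x)-f'(y)}{x-y}=k\sum_{l=0}^{k-2}x^ly^{k-2-l}$ and the fact that on the support $|x|+|y|=x+y$, to obtain
\begin{eqnarray*}
   m_k(t) = m_k(0) + \alpha k\int_0^t m_{k-1}(s)\,ds + \frac{\beta k}{2}\sum_{l=0}^{k-2}\int_0^t\bigl(m_{l+1}(s)m_{k-2-l}(s)+m_l(s)m_{k-1-l}(s)\bigr)ds.
\end{eqnarray*}
With $m_0(t)\equiv 1$ and $m_k(0)=0$ for $k\geq 1$ (as $\mu_0=\delta_0$), this is a closed triangular system: the right-hand side for $m_k$ involves only $m_0,\dots,m_{k-1}$, so by induction on $k$ every $m_k(t)$ is uniquely determined (each is the integral of a continuous function of the lower-order moments, hence unique and continuous).

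Having shown that all solutions supported on $[0,\infty)$ share the same moment sequence, and invoking the moment-determinacy from Theorem \ref{thm:moments}, I conclude that at each fixed $t$ the measure $\mu_t$ is unique; continuity in $t$ then gives uniqueness in $\mathcal{C}(\R_+,\mathrm{Pr}(\R))$. Existence is not really at issue here — it is guaranteed abstractly by Theorem \ref{thm:main} together with the known convergence of the Laguerre/Wishart empirical measures to the Marchenko–Pastur family (the classical result being generalized); alternatively one can simply exhibit $(\mu_t^{MP})$ and verify \eqref{eq:Wishart:integral} directly by checking that its moments satisfy the recursion above (these are, up to scaling, the Narayana/Catalan-type numbers familiar from the Marchenko–Pastur law). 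Either route supplies the solution whose uniqueness we have just proved.

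The main subtlety — and the reason the hypothesis $\alpha\geq 0$ and the restriction to $[0,\infty)$-supported solutions matter — is precisely that these are exactly the hypotheses under which the second part of Theorem \ref{thm:moments} delivers moment-determinacy: one needs $b=\alpha$ bounded on $[0,\infty)$ and non-negative on $(-\infty,0)$, and one needs the a priori moment growth bound \eqref{ref:akt:est}, whose inductive proof uses $f_k'(x)\leq 0$ paired with $b(x)\geq 0$ on the negative half-line. For a measure genuinely supported on $[0,\infty)$ the sign structure is automatic, so the only real work is the bookkeeping in the moment recursion; I expect no essential obstacle beyond confirming that the $|x|+|y|$ term collapses to $x+y$ on the support and that the triangular structure is genuinely closed (which it is, since the double sum in the recursion never reaches index $k$). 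One should also note, as the authors emphasize in the surrounding discussion, that dropping the support constraint destroys uniqueness — so the proof should make clear that the support hypothesis is used at exactly the point where $|x|$ is replaced by $x$ and where the sign of $b f_k'$ is controlled.
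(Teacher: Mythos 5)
Your proposal is correct and takes essentially the same route as the paper: the paper cites the Cauchy-transform proof of Cabanal Duvillard and Guionnet for the record, but the argument it actually sketches is exactly your moment recursion (your symmetrized sum $\frac{\beta k}{2}\sum_l(m_{l+1}m_{k-2-l}+m_lm_{k-1-l})$ collapses by re-indexing to the paper's $\beta k\sum_l m_{l+1}m_{k-2-l}$), followed by moment-determinacy via the second part of Theorem~\ref{thm:moments}. You flesh out the steps the paper leaves implicit (the triangular closedness, the replacement of $|x|+|y|$ by $x+y$ on the support, and the role of the hypotheses $\alpha\geq 0$ and $\supp\mu_t\subseteq[0,\infty)$ in making Theorem~\ref{thm:moments} applicable), so the content is the same even if the exposition is more complete.
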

	
	The result was shown in \cite{bib:CabanalGuionnet:2001} by applying the Cauchy transform. The solution (for $\beta=2$) is given by the family of dilations of the Marchenko-Pastur distribution
	\begin{equation*}
	   \nu_t^{MP}(dx) = (1-\alpha)_+\,\delta_0(dx)+\frac{\sqrt{(x-at)(bt-x)}}{2\pi xt}\ind_{(ta,tb)}(x)dx\/,
	\end{equation*}
	where $a=(1-\sqrt{\alpha})^2$ and $b=(1+\sqrt{\alpha})^2$. Note the presence of the atom at zero for $\alpha<1$. However, one can also deduce the following recursive integral equations for the moments $m_k(t) = \int_\R x^k \mu_t(dx)$ of a solution $\mu_t(dx)$ to  \eqref{eq:Wishart:integral}
	\begin{eqnarray*}
	  m_0(t) &=& 1\/,\\
	  m_k(t) &=& \alpha k \int_0^t m_{k-1}(s)ds+\beta k \sum_{i=0}^{k-2}\int_0^t m_{i+1}(s)m_{k-2-i}(s)ds\/,\quad k=1,2,\ldots 
	\end{eqnarray*}
	and show that it has the unique solution coinciding with the moments of $\nu_t^{MP}(dx)$. Note that the real-valued case ($\beta=1$) can be deduced from the complex-valued case  by simple time-scaling, similarly as it was done in Theorem \ref{thm:Wigner2}. Note that it transforms solution for $\alpha\geq 0$ and $\beta=2$ into $\beta=1$ and $\alpha/2\geq 0$.
	
	\medskip
	
	Let us denote the unique solution to \eqref{eq:Wishart:integral}, with $\alpha\geq 0$, supported in $[0,\infty)$ for every $t\geq 0$ by $\mathcal{MP}^+(\alpha)$. Here we assume that $\mu_0(dx)=\delta_0(dx)$. Analogously, we write $\mathcal{MP}^{-}(\alpha)$ for $\alpha\leq 0$ for a solution with support in $(-\infty,0]$. It is easy to check that the later are just reflections of $\mathcal{MP}^+(\alpha)$ into negative half-line, i.e. 
	$$
	  \mathcal{MP}^-(-\alpha) = \{(\mu_t(dx))_{t\geq 0}: \mu_t(A)=\nu_t(-A), (\nu_t)_{t\geq 0}\in \mathcal{MP}^{+}(\alpha)\}\/,\quad \alpha\geq 0\/.
	$$
	Let us also write $\mathcal{MP}(\alpha)$ for a set of solutions to \eqref{eq:Wishart:integral} without restriction on the support of $\mu_t$. This distinction is made since $\mathcal{MP}(\alpha)\neq \mathcal{MP}^+(\alpha)$ for $\alpha\in[0,1)$, i.e. the uniqueness of the solution to \eqref{eq:Wishart:integral} does not hold for $\alpha\in[0,1)$. This is an immediate consequence of Proposition \ref{prop:Wishart:Unique} and the following result.
	 
	\begin{proposition}
	\label{prop:Wishart:plus}
For every $\alpha\in[0,1)$ and $\mu_0(dx)=\delta_0(dx)$ there exists a solution $(\mu_t)_{t\geq 0}$ such that $\mu_t(-\infty,0)>0$ for very $t>0$.
	\end{proposition}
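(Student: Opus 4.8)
The plan is to construct such a solution explicitly, as a sum $\mu_t=\hat\theta_t+\hat\phi_t$ of two rescaled Marchenko--Pastur flows, one supported on $[0,\infty)$ and one, reflected, supported on $(-\infty,0]$. The basic tool is a scaling property of \eqref{eq:Wishart:integral}: if $(\nu_t)_{t\geq0}$ solves \eqref{eq:Wishart:integral} with drift coefficient $\gamma$ (i.e. with $\gamma\int_0^t\langle\nu_s,f'\rangle\,ds$ in place of the drift term), with $\nu_0=\delta_0$, and $c>0$, then $\hat\nu_t(A):=c\,\nu_t(A/c)$ — a measure of total mass $c$ with $\hat\nu_0=c\,\delta_0$ — satisfies the same identity with the drift coefficient replaced by $c\gamma$ and the coefficient $\beta/2$ of the double integral unchanged; this is the change of variables $x\mapsto cx$ together with the degree-one homogeneity of $(x,y)\mapsto|x|+|y|$. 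Also, reflecting a solution through the origin changes the drift coefficient to its negative, as already noted in the text for $\mathcal{MP}^{\pm}$. I would establish these two facts first; together with Proposition \ref{prop:Wishart:Unique} they furnish, for each $\gamma\geq0$ and $c>0$, a mass-$c$ solution of the corresponding rescaled equation supported on $[0,\infty)$, and its reflection supported on $(-\infty,0]$.

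Next I would fix $p\in(\alpha/\beta,1)$ (nonempty since $\alpha/\beta<1$) and set $q:=1-p$. Let $\hat\theta_t$ be the $c=p$ rescaling of the $\mathcal{MP}^{+}$-flow with underlying drift $\gamma_1:=(\alpha+\beta q)/p\geq0$, and $\hat\phi_t$ the $c=q$ rescaling of the reflection of the $\mathcal{MP}^{+}$-flow with underlying drift $\gamma_2:=(\beta p-\alpha)/q>0$; put $\mu_t:=\hat\theta_t+\hat\phi_t$. Then $\mu_0=p\delta_0+q\delta_0=\delta_0$ and each $\mu_t$ is a probability measure. To verify that $(\mu_t)$ solves \eqref{eq:Wishart:integral} with drift $\alpha$, insert $\mu_s=\hat\theta_s+\hat\phi_s$ into the quadratic form and split the double integral into the four regions fixed by the signs of $x$ and $y$. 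The two ``pure'' regions reproduce exactly the double integrals of the (rescaled) equations satisfied by $\hat\theta$ and $\hat\phi$. The key point is the two ``mixed'' regions: on $\{x\geq0\geq y\}$ one has $|x|+|y|=x-y$, so the singular kernel cancels and
\[
\int_{x\geq0}\!\int_{y\leq0}\frac{f'(x)-f'(y)}{x-y}\,(|x|+|y|)\,\hat\theta_s(dx)\,\hat\phi_s(dy)=q\,\langle\hat\theta_s,f'\rangle-p\,\langle\hat\phi_s,f'\rangle ,
\]
and the same on $\{x\leq0\leq y\}$ with $\hat\theta,\hat\phi$ interchanged — purely linear, ``drift-type'' expressions. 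Consequently the only $\langle\hat\theta_s,f'\rangle$- and $\langle\hat\phi_s,f'\rangle$-terms in $\langle\mu_t,f\rangle$ are the drift coefficients $p\gamma_1$, $-q\gamma_2$ of the two rescaled equations, plus $\beta q\,\langle\hat\theta_s,f'\rangle$ and $-\beta p\,\langle\hat\phi_s,f'\rangle$ coming from the mixed regions; matching these against $\alpha\langle\mu_s,f'\rangle=\alpha\langle\hat\theta_s,f'\rangle+\alpha\langle\hat\phi_s,f'\rangle$ forces exactly $p\gamma_1=\alpha+\beta q$ and $q\gamma_2=\beta p-\alpha$, i.e. the values chosen above. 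Since the Marchenko--Pastur flows are compactly supported, \eqref{eq:Wishart:integral} applies to polynomially growing $f$ by Remark \ref{rem:classf}, and the diagonal convention is harmless.

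Finally, since $\gamma_2>0$, the $\mathcal{MP}^{+}$-flow underlying $\hat\phi$ is a genuinely spreading Marchenko--Pastur family, so its absolutely continuous part has strictly positive mass for every $t>0$; after reflection and $q$-rescaling this yields $\hat\phi_t((-\infty,0))>0$, hence $\mu_t((-\infty,0))\geq\hat\phi_t((-\infty,0))>0$ for every $t>0$, as claimed. The main obstacle is the bookkeeping in the second step — carrying out the four-fold splitting carefully and tracking how the mass-rescaling interacts with the drift coefficient, so that all the coefficients close up with the stated $p,\gamma_1,\gamma_2$; everything else (the scaling and reflection facts, and the positivity of the negative mass) is elementary.
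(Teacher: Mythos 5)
Your proposal is correct and follows essentially the same strategy as the paper's first construction in the proof of Proposition~\ref{prop:Wishart:plus}: write $\mu_t$ as a superposition of an $\mathcal{MP}^+$-type flow on $[0,\infty)$ and a reflected $\mathcal{MP}^-$-type flow on $(-\infty,0]$, and use the key cancellation that on $\{xy\le0\}$ the kernel $\frac{f'(x)-f'(y)}{x-y}(|x|+|y|)$ reduces to $f'(x)-f'(y)$, so the cross term in the double integral produces a linear ``drift-type'' contribution (the paper's identity \eqref{eq:Wishart:plusminus}). The only real difference is parametrization: the paper fixes the underlying drift to $1$, takes $\lambda=(1+\alpha)/2$, $\lambda^*=(1-\alpha)/2$, and combines mass- and time-rescaling via $\mu_t=\lambda\nu^+_{\lambda t}+\lambda^*\nu^-_{\lambda^* t}$; you instead use a spatial rescaling $\hat\nu_t(A)=c\,\nu_t(A/c)$ of $\mathcal{MP}^\pm$-flows with adjustable drifts $\gamma_1,\gamma_2$ and a free weight $p\in(\alpha/\beta,1)$. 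For Marchenko--Pastur flows these operations coincide by self-similarity (your $\hat\theta_t = p\,\nu^{\mathcal{MP},\gamma_1}_{pt}$), so the two constructions produce the same family of solutions, with yours slightly more general in that $p$ is a free parameter. You also carry $\beta$ explicitly throughout the coefficient-matching, which is a bit cleaner than the paper, whose proof implicitly sets $\beta=1$ in the quadratic term and relies on the time-scaling remark preceding Proposition~\ref{prop:Wishart:Unique} to cover $\beta=2$. Your positivity argument (that $\gamma_2>0$ makes the absolutely continuous part of the reflected flow nontrivial for every $t>0$) is also correct, matching the paper's observation that $\lambda^*>0$ makes $\mu_t(-\infty,0)>0$.
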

	
	\begin{proof}
For given $\alpha\in[0,1)$ we define $\lambda = (1+\alpha)/2\in[1/2,1)$ and $\lambda^* = (1-\alpha)/2\in(0,1/2]$. Let $\nu_t^+(dx)\in\mathcal{MP}^+(1)$, be the unique solution to
\begin{eqnarray*}
   \left<{\nu}_t^+,f\right>
	= f(0)+\int_0^t ds\int_{[0,\infty)}f'(x)\nu_s^+(dx)+\frac{1}{2}\int_{0}^{t}ds\int_{[0,\infty)^{2}}
	\frac{f^{\prime}(x)-f^{\prime}(y)}{x-y}(x+y)\nu_{s}^+(dx)\nu_{s}^+(dy)\/.
\end{eqnarray*}
Let $\nu_t^{-}(dx)\in\mathcal{MP}^{-}(1)$, be the reflection of $\nu_t^+(dx)$. We define
\begin{equation}
  \label{eq:Wishart:neg}
  \mu_t(dx) = \lambda \nu_{\lambda t}^+(dx)+\lambda^*\nu_{t\lambda^*}^-(dx)\/.
\end{equation}
Since $\lambda^*>0$, it is obvious that $\mu_t(-\infty,0)>0$ for every $t>0$. To see that $(\mu_t)_{t\geq 0}$ solves \eqref{eq:Wishart:integral} observe that 
\begin{eqnarray*}
      \left<{\nu}_{ \lambda t}^+,f\right>
	= f(0)+\lambda\int_0^t ds\int_{[0,\infty)}f'(x)\nu_{\lambda s}^+(dx)+\frac{\lambda}{2}\int_{0}^{t}ds\int_{[0,\infty)^{2}}
	\frac{f^{\prime}(x)-f^{\prime}(y)}{x-y}(x+y)\nu_{\lambda s}^+(dx)\nu_{\lambda s}^+(dy)\/.
\end{eqnarray*}
Moreover, denoting $\overline{f}(x)=f(-x)$, we get $ \left<\nu_{(1-\lambda) t}^-,f\right>=\left<{\nu}_{(1-\lambda) t}^+,\bar{f}\right>$, which is equal to 
\begin{eqnarray*}
   && f(0)+\lambda^*\int_0^t ds\int_{(0,\infty)}\bar{f}'(x)\nu_{\lambda^* s}^+(dx)+\frac{\lambda^*}{2} \int_{0}^{t}ds\int_{(0,\infty)^{2}}
	\frac{\bar{f}^{\prime}(x)-\bar{f}^{\prime}(y)}{x-y}(x+y)\nu_{\lambda^* s}^+(dx)\nu_{\lambda^* s}^+(dy),\\
	&=& f(0)-\lambda^*\int_0^t ds\int_{(-\infty,0)}f'(x){\nu}_{\lambda^* s}^-(dx)+\frac{\lambda^*}{2}\int_{0}^{t}ds\int_{(-\infty,0)^{2}}
	\frac{f^{\prime}(x)-f^{\prime}(y)}{x-y}(|x|+|y|){\nu}_{\lambda^* s}^-(dx){\nu}_{\lambda^* s}^-(dy).
\end{eqnarray*}
Here we used the fact that ${\nu}_t^+(\{0\})=\nu_t^-(\{0\})=0$. Since 
\begin{eqnarray}
   \label{eq:Wishart:plusminus}
   \int_{(-\infty,0)}\int_{(0,\infty)} \frac{f'(x)-f'(y)}{x-y}(|x|+|y|)\nu_{\lambda s}^+(dx){\nu}^-_{\lambda^* s}(dy) = \int_{(0,\infty)}f'(x)\nu_{\lambda s}^+(dx)-\int_{(-\infty,0)}f'(x){\nu}_{\lambda^* s}^-(dx)
\end{eqnarray}
we can write
\begin{eqnarray*}
\int_0^tds \int_{\R^2} \frac{f'(x)-f'(y)}{x-y}(|x|+|y|)\mu_{s}(dx)\mu_{s}(dy)
\end{eqnarray*}
as the following sum 
\begin{eqnarray*} 
&&\lambda^2\int_{0}^{t}ds\int_{[0,\infty)^{2}}
	\frac{f^{\prime}(x)-f^{\prime}(y)}{x-y}(x+y)\nu_{\lambda s}^+(dx)\nu_{\lambda s}^+(dy)+2\lambda\lambda^*\int_0^t ds\int_{(0,\infty)}f'(x)\nu_{\lambda s}^+(dx)\\
	&&-2\lambda\lambda^*\int_0^tds\int_{(-\infty,0)}f'(x){\nu}_{\lambda^*s }^-(dx)+(\lambda^*)^2\int_{0}^{t}ds\int_{(-\infty,0)^{2}}
	\frac{f^{\prime}(x)-f^{\prime}(y)}{x-y}(|x|+|y|){\nu}_{\lambda^*s}^-(dx){\nu}_{\lambda^*s}^-(dy)\/.
\end{eqnarray*}
Thus we get
\begin{eqnarray*}
   \langle \mu_t,f\rangle &=& \langle \mu_0,f\rangle +\alpha \int_0^tds\int_{\R}f'(x)\mu_t(dx)+\frac12\int_0^t \int_{\R^2} \frac{f'(x)-f'(y)}{x-y}(|x|+|y|)\mu_s(dx)\mu_s(dy)\/,
\end{eqnarray*}
where we used the fact that $2\lambda-1=\alpha$.

The solution constructed above is not the only one with non-positive support. Indeed, for given numbers $\alpha^-\geq \alpha^+> 1$ we define the following strictly positive constants
\begin{eqnarray*}
   \lambda = \frac{\alpha^--1}{\alpha^+\alpha^--1}\/,\quad \lambda^* = \frac{\alpha^+-1}{\alpha^+\alpha^--1}\/,\quad \gamma = \frac{(\alpha^+-1)(\alpha^--1)}{\alpha^+\alpha^--1}\/.
\end{eqnarray*}	
Note that $\lambda+\lambda^*+\gamma=1$. Let $\nu^+_t(dx)$ be $\mathcal{MP}^+(\alpha^+)$ and let $\nu^{-}_t(dx)$ be $\mathcal{MP}^-(-\alpha^-)$, i.e. the reflection of $\mathcal{MP}(\alpha^-)$ onto negative half-line. Note that $\nu^+_t(\{0\})=\nu^{-}_t(\{0\})=0$ since $\alpha^+$ and $\alpha^-$ are greater than $1$. We define
\begin{equation}
   \label{eq:Wishart:non2}
   \mu_t(dx) = \lambda \nu^{+}_{\lambda t}(dx)+ \gamma\delta_0(dx)+\lambda^*\nu^-_{\lambda^* t}(dx)\/,\quad t\geq 0\/.
\end{equation}
It is easy to see that 
\begin{eqnarray*}
   \langle \mu_t,f \rangle &=& \langle \mu_0,f \rangle + \alpha^+\lambda^2 \int_0^t ds\int_{(0,\infty)}f'(x)\nu^{+}_{\lambda s}(dx)-\alpha^-(\lambda^*)^2\int_0^t ds\int_{(-\infty,0)}f'(x)\nu^{-}_{\lambda^* s}(dx)\\
	&&+\frac{\lambda^2}{2}\int_0^tds \int_{(0,\infty)^2}\frac{f'(x)-f'(y)}{x-y}(x+y)\nu^{+}_{\lambda s}(dx)\nu^{+}_{\lambda s}(dy)\\
	&&+\frac{(\lambda^*)^2}{2}\int_0^tds \int_{(-\infty,0)^2}\frac{f'(x)-f'(y)}{x-y}(|x|+|y|)\nu^{-}_{\lambda^* s}(dx)\nu^{-}_{\lambda^* s}(dy)\/.
\end{eqnarray*}
However, the sum of the last two expressions is equal to the sum of the following three integrals
\begin{eqnarray*}
		&&\frac{1}{2}\int_0^tds \int_{\R^2} \frac{f'(x)-f'(y)}{x-y}(|x|+|y|)\mu_{s}(dx)\mu_{s}(dy)\\
		&&-\gamma \int_0^tds\int_{\R} \frac{(f'(x)-f'(0))|x|}{x}(\lambda \nu^+_{s\lambda}(dx)+\lambda^*\nu^-_{s\lambda^*}(dx))\\
		&& - \lambda\lambda^*\int_{(-\infty,0)}\int_{(0,\infty)} \frac{f'(x)-f'(y)}{x-y}(|x|+|y|)\nu^{+}_{\lambda s}(dx)\nu^-_{\lambda^* s}(dy) \/.
	\end{eqnarray*}
The last integral can be evaluated as in \eqref{eq:Wishart:plusminus}. The second integral is just equal to 
$$
   -\gamma\lambda \int_0^t ds \int_{(0,\infty)} f'(x)\nu^+_{s\lambda}(dx)+\gamma\lambda^* \int_0^t ds \int_{(-\infty,0)} f'(x)\nu^-_{s\lambda^*}(dx)+\gamma(\lambda-\lambda^*)f'(0)\/.
$$
Combining all together and using the fact that 
$$
  \lambda\alpha^+-\lambda^*-\beta = \lambda-\lambda^*\alpha^-+\beta = \lambda-\lambda^* = \frac{\alpha^--\alpha^+}{\alpha^+\alpha^--1}
$$
we arrive at
\begin{eqnarray*}
   \langle \mu_t,f\rangle &=& \langle \mu_0,f\rangle +\alpha \int_0^tds\int_{\R}f'(x)\mu_t(dx)+\frac12\int_0^t \int_{\R^2} \frac{f'(x)-f'(y)}{x-y}(|x|+|y|)\mu_s(dx)\mu_s(dy)
\end{eqnarray*}	
with 
\begin{eqnarray}
\label{eq:Wishart:alpha+-}
  \alpha = \frac{\alpha^--\alpha^+}{\alpha^+\alpha^--1}\/.
\end{eqnarray}
Note that for any given $\alpha\in[0,1)$ we can find $\alpha^-\geq \alpha^+>1$ such that \eqref{eq:Wishart:alpha+-}	holds. Moreover, for any $\alpha^-\geq \alpha^+>1$ the equality \eqref{eq:Wishart:alpha+-} defines a number $\alpha\in[0,1)$. 
	\end{proof}	
	
\begin{remark}
 In general, the fact that there are many solutions to \eqref{eq:Wishart:integral} does not imply that the weak limit of $\mu_t^{(n)}(dx)$ does not exist, in the case of the generalized Wishart SDE \eqref{eq:Wishart:general} one can simply construct a sequence of solutions $X_t^{(n)}$ such that $\mu_t^{(n)}$ converges to a solution with non-positive support. Indeed, using the results from \cite{bib:gm2019}, we can consider a solution $X_t^{(n)}$ to \eqref{eq:Wishart:general} with $\alpha_n$ such that $\alpha_n/n\to \alpha\in[0,1)$ and the initial distribution such that 
$$
\lambda_1^{(n)}(0)<\ldots<\lambda_{k^*}^{(n)}(0)<0<\lambda_{k^*+1}^{(n)}(0)<\ldots<\lambda_n^{(n)}(0)\/,
$$
where $k^*=\lceil \frac{n+1-\alpha_n}{2}\rceil$. Note that $k^*/n\to \frac{1-\alpha}{2}=\lambda^*$ and $1-k^*/n\to \frac{1+\alpha}{2}=\lambda$, where $\lambda$ and $\lambda^*$ are the constants in the first example constructed above. One can easily show that the sequence of the corresponding empirical measure-valued processes $\mu_t^{(n)}(dx)$ converges to \eqref{eq:Wishart:neg} by considering separately the positive and the negative part of $\mu_t^{(n)}(dx)$. The details as well as the corresponding construction for \eqref{eq:Wishart:non2} are left to the reader. 
\end{remark}	
	
\begin{remark}
   The above-constructed examples show also that the problem of the existence of the weak limit of $(\mu_t^{(n)})_{t\geq 0}$ is not related to the uniqueness of the solution to \eqref{eq:MSDE:hermitian}. 
\end{remark}	

\begin{theorem}
  \label{thm:Wishart:class}
   Let $g_n$, $h_n$ and $b_n$ be continuous functions and $(X_t^{(n)})$ be a solution to 
	$$
	  dX_t^{(n)} = g_n(X_t^{(n)})dW_t^{(n)}h_n(X_t^{(n)})+h_n(X_t^{(n)})d(W_t^{(n)})^*g_n(X_t^{(n)}) +\frac{1}{n}b_n(X_t^{{n}})dt\/,\quad X_0^{(n)}\in\mathcal{H}_n^+\/,
	$$
	where $W_t^{(n)}= n^{-1/2}W_t$ with $W_t$ being a $n\times n$ complex matrix Brownian motion and $\supp(\mu_0^{(n)})\subset[0,\infty]$. Assume that \eqref{eq:growth} holds, $g_n^2(x)\to |x|$, $h_n^2(x)\to 1$ and $b_n(x)/n\to \alpha\geq 0$, locally uniformly as $n\to \infty$. If additionally there exist $c_1(n), c_2(n), c_3(n)>0$ such that for every $x\geq 0$ we have
	\begin{equation}
	   \label{eq:Wishart:bounds}
	   g_n^2(x) \leq c_1(n)x\/,\quad
		 h_n^2(x) \leq  c_2(n)\/,\quad
		 b_n(x) \geq c_3(n) 
	\end{equation}
	and $c_3(n)\geq c_1(n)c_2(n)(\beta(n-1)+2)$ holds, then the related empirical measure-valued process
	$$
	  \mu_t^{(n)}(dx) = \frac{1}{n}\sum_{i=1}^n \delta_{\lambda_i^{(n)}}(dx)
	$$
	tends in probability in the space $\mathcal{C}(\R_+,\textrm{Pr}(\R))$ to the family of the Marchenko-Pastur distributions
	$$
	  \nu_t^{MP}(dx) = (1-\alpha)_+\,\delta_0(dx)+\frac{\sqrt{(x-at)(bt-x)}}{2\pi xt}\ind_{(ta,tb)}(x)dx\/,
	$$
	with $a=(1-\sqrt{\alpha})^2$ and $b=(1+\sqrt{\alpha})^2$, whenever $\mu_0^{(n)}\Rightarrow \delta_0$ such that \eqref{eq:8momentbound} holds.
\end{theorem}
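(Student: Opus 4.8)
The plan is to feed the data of this theorem into Theorem~\ref{thm:main}, to upgrade the information that the initial matrices are positive semidefinite to the statement that the whole matrix flow stays positive semidefinite, and then to invoke the uniqueness result of Proposition~\ref{prop:Wishart:Unique}. First I would check that the hypotheses of Theorem~\ref{thm:main} are met: \eqref{eq:growth} and \eqref{eq:8momentbound} are assumed, and $g_n^2\to|x|$, $h_n^2\to1$, $b_n/n\to\alpha$ locally uniformly, so the limiting kernel is $G(x,y)=g^2(x)h^2(y)+g^2(y)h^2(x)=|x|+|y|$. Hence the family $\{(\mu_t^{(n)})_{t\ge0}:n\ge1\}$ is tight in $\mathcal{C}(\R_+,\mathrm{Pr}(\R))$ and every weak subsequential limit $(\mu_t)_{t\ge0}$ is a continuous $\mathrm{Pr}(\R)$-valued path solving \eqref{eq:Wishart:integral} with $\beta=2$ and, since $\mu_0^{(n)}\Rightarrow\delta_0$, with $\mu_0=\delta_0$. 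Because \eqref{eq:Wishart:integral} is \emph{not} uniquely solvable (Proposition~\ref{prop:Wishart:plus}), the real task is to show that each such limit is in addition supported in $[0,\infty)$ for every $t$; Proposition~\ref{prop:Wishart:Unique} then pins it down as the deterministic dilated Marchenko--Pastur family $(\nu_t^{MP})$, and convergence in probability follows from tightness together with the fact that all subsequential limits then agree.

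The heart of the argument is the claim that, for every fixed $n$ and every $t\ge0$, the solution stays in $\overline{\mathcal{H}_n^+}$, i.e. $\lambda_1^{(n)}(t)\ge0$; note $\lambda_1^{(n)}(0)\ge0$ since $X_0^{(n)}\in\mathcal{H}_n^+$ and $\supp(\mu_0^{(n)})\subset[0,\infty)$. I would prove this via the scaled eigenvalue SDE: by \eqref{eq:eigen:SDE} and Remark~\ref{rem:pkn:SDE}, up to the first collision time the smallest eigenvalue satisfies $d\lambda_1^{(n)}=\tfrac{2}{\sqrt n}g_n(\lambda_1^{(n)})h_n(\lambda_1^{(n)})\,d\nu_1+\tfrac1n b_n(\lambda_1^{(n)})\,dt+\tfrac{\beta}{n}\sum_{j\ge2}\tfrac{G_n(\lambda_1^{(n)},\lambda_j^{(n)})}{\lambda_1^{(n)}-\lambda_j^{(n)}}\,dt$. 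When $\lambda_1^{(n)}=0$, so all $\lambda_j^{(n)}>0$ for $j\ge2$, the bound $g_n^2(x)\le c_1(n)x$ forces $g_n(0)=0$, hence the martingale coefficient vanishes, while $G_n(0,\lambda_j^{(n)})=g_n^2(\lambda_j^{(n)})h_n^2(0)$ together with $g_n^2(\lambda_j^{(n)})/\lambda_j^{(n)}\le c_1(n)$ and $h_n^2(0)\le c_2(n)$ gives that the drift at $0$ is at least $\tfrac1n\bigl(b_n(0)-\beta h_n^2(0)\sum_{j\ge2}g_n^2(\lambda_j^{(n)})/\lambda_j^{(n)}\bigr)\ge\tfrac1n\bigl(c_3(n)-\beta c_1(n)c_2(n)(n-1)\bigr)\ge\tfrac{2c_1(n)c_2(n)}{n}>0$, where I used \eqref{eq:Wishart:bounds} and precisely the hypothesis $c_3(n)\ge c_1(n)c_2(n)(\beta(n-1)+2)$. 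Thus near $0$ the smallest eigenvalue behaves like a squared-Bessel / CIR process with diffusion coefficient vanishing at $0$ and strictly positive drift there, i.e. with index strictly above the critical one, so a Feller-test / comparison argument shows it cannot reach $0$ from above; since the drift lower bound is uniform over configurations with $\lambda_j^{(n)}\ge\lambda_1^{(n)}=0$, the collision times are handled by the usual localization. This is exactly the positivity statement established, for the present class of equations, in \cite{bib:gm2019} (in the spirit of Bru~\cite{bib:b91}), to which I would refer for the collision bookkeeping; consequently $\supp(\mu_t^{(n)})\subset[0,\infty)$ for all $n$ and $t$.

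Finally I would pass the support constraint to the limit and conclude. Along a subsequence with $\mu_t^{(n_k)}\to\mu_t$ and any fixed $t$, the Portmanteau theorem applied to the open set $(-\infty,0)$ gives $\mu_t((-\infty,0))\le\liminf_k\mu_t^{(n_k)}((-\infty,0))=0$, so $\supp(\mu_t)\subset[0,\infty)$. On $[0,\infty)^2$ one has $|x|+|y|=x+y$, so $(\mu_t)_{t\ge0}$ solves the equation of Proposition~\ref{prop:Wishart:Unique} with $\alpha\ge0$ and $\mu_0=\delta_0$, whence $(\mu_t)_{t\ge0}=(\nu_t^{MP})_{t\ge0}$ by that proposition, which is the announced family of dilated Marchenko--Pastur laws. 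Since every subsequential limit coincides with the deterministic path $(\nu_t^{MP})$ and $\{(\mu_t^{(n)})_{t\ge0}:n\ge1\}$ is tight (Proposition~\ref{prop:tightness}), the whole family converges in probability in $\mathcal{C}(\R_+,\mathrm{Pr}(\R))$ to $(\nu_t^{MP})$. I expect the only genuinely nontrivial step to be the finite-$n$ positivity in the presence of eigenvalue collisions — the inequality on $c_3(n)$ is tailored precisely to make the drift of the smallest eigenvalue at $0$ strictly positive — while the remainder is a soft compactness argument combined with the already-established uniqueness of the positively-supported Wishart equation.
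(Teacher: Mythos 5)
Your overall strategy coincides with the paper's: establish finite-$n$ positivity of $\lambda_1^{(n)}$, deduce $\supp(\mu_t)\subset[0,\infty)$ for any subsequential limit by Portmanteau, and conclude via the uniqueness of positively-supported solutions (Proposition~\ref{prop:Wishart:Unique}). You also correctly identify the inequality $c_3(n)\ge c_1(n)c_2(n)(\beta(n-1)+2)$ as what makes the drift work out, and your final compactness argument is the same as the paper's.

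The genuine divergence, and the genuine gap, is in how you establish finite-$n$ positivity. You run the scaled eigenvalue SDE \eqref{eq:eigen:SDE} for $\lambda_1^{(n)}$ and argue that its drift is positive \emph{at} $\lambda_1^{(n)}=0$, then invoke a CIR/Feller comparison. This has two problems the paper's proof is specifically designed to avoid. First, \eqref{eq:eigen:SDE} is only valid up to the first collision time $T_c$, and ``usual localization'' does not extend it past $T_c$; this is not an integrability issue but a breakdown of the representation itself, as the paper emphasizes in its remarks. Deferring the ``collision bookkeeping'' to \cite{bib:gm2019} is not a substitute, since that reference treats squared Bessel particle systems and does not cover the general class of coefficients \eqref{eq:Wishart:bounds}. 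Second, and more fundamentally, the drift lower bound you compute is only valid on the boundary $\{\lambda_1^{(n)}=0\}$. For $\lambda_1^{(n)}>0$ small, the interaction term is $-\frac{\beta}{n}\sum_{j\geq 2}\frac{G_n(\lambda_1^{(n)},\lambda_j^{(n)})}{\lambda_j^{(n)}-\lambda_1^{(n)}}$, which is unbounded below when $\lambda_2^{(n)}$ is close to $\lambda_1^{(n)}$. Hence $\lambda_1^{(n)}$ is \emph{not} dominated from below by an autonomous CIR process with positive drift; the random environment coming from the other eigenvalues is not controlled, and a one-dimensional Feller test cannot be applied to $\lambda_1^{(n)}$ alone. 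The paper's proof sidesteps both issues by working with $V_n=\log e_n$: since $e_n=\det X_t^{(n)}$ is a smooth function of the matrix entries, its semimartingale decomposition from Lemma~\ref{lem:sympol:SDE} holds globally (no collision-time caveat), and the algebraic identities for $e_{n-1}^{\overline{\lambda_i}}$ and $e_{n-2}^{\overline{\lambda_i},\overline{\lambda_j}}$ aggregate all eigenvalue interactions so that the \emph{net} drift of $V_n$ is nonnegative, at which point a single McKean-type argument excludes $T_0^{(n)}<\infty$. To make your route rigorous you would need a joint non-collision plus non-attainment-of-zero argument for the whole system, which is considerably heavier than, and orthogonal in spirit to, the paper's symmetric-polynomial technique.
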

	\begin{proof}
	   Recalling the result from Theorem \ref{thm:main}, the above-given discussion and Proposition \ref{prop:Wishart:plus}, the result will follow from the fact, that the additional assumptions \eqref{eq:Wishart:bounds} ensure that the solutions $X_t^{(n)}$ stay in the cone $\mathcal{H}_n^+$ of the positive-defined Hermitian matrices for every $t> 0$. To see this define $V_n(t)=\log e_n(t)$, where $e_n(t) = \lambda_1^{(n)}(t)\cdot\ldots\cdot\lambda_n^{(n)}(t)$ is the determinant of $X^{(n)}_t$, being well-defined for $t<T_0^{(n)}$, where $T_0^{(n)}=\inf\{t\geq 0: \lambda_1^{(n)}(t)=0\}$. Applying \eqref{eq:sympol:SDE} and the Ito's formula we obtain
			\begin{eqnarray}
			  \label{eq:Vn:SDE}
			   dV_n &=& \frac{2}{\sqrt{n}e_n}\sum_{i=1}^n g_n(\lambda_i^{(n)})h_n(\lambda_i^{(n)})e_{n-1}^{\overline{\lambda_i^{(n)}}}d\nu_i-\frac{2}{n\,e_n^2}\sum_{i=1}^n g_n^2(\lambda_i^{(n)})h_n^2(\lambda_i^{(n)})(e_{n-1}^{\overline{\lambda_i^{(n)}}})^2\,dt\\
				&&+\frac{1}{e_n}\left(\sum_{i=1}^n \frac{b_n(\lambda_i^{(n)})}{n}e_{n-1}^{\overline{\lambda_i^{(n)}}}-\frac{\beta}{n}\sum_{i<j}G_n(\lambda_i^{(n)},\lambda_j^{(n)})e_{n-2}^{\overline{\lambda_i^{(n)}},\overline{\lambda_j^{(n)}}}\right)dt\/, \quad t<T_0^{(n)}\/,\nonumber
			\end{eqnarray}
			where $\nu_1,\dots,\nu_n$ are independent Brownian motions. Using \eqref{eq:Wishart:bounds} and the relations
			\begin{eqnarray*}
			   \lambda_i^{(n)}e_{n-1}^{\overline{\lambda_i^{(n)}}} = e_n\/,\quad \sum_{i=1}^ne_{n-1}^{\overline{\lambda_i^{(n)}}}=e_{n-1}\/,\quad \sum_{i<j}(\lambda_i^{(n)}+\lambda_j^{(n)})e_{n-2}^{\overline{\lambda_i^{(n)}},\overline{\lambda_j^{(n)}}} = (n-1)e_{n-1}
			\end{eqnarray*}
			we get the following bounds on the drift part of $V_n$
			\begin{eqnarray*}
			   \textrm{drift}[V_n]\geq \frac{e_{n-1}}{n\,e_n}(c_3(n)-c_1(n)c_2(n)[\beta(n-1)+2])\geq 0
			\end{eqnarray*}
			for every $t<T_0^{(n)}$. Since the finite-variation part $\textrm{drift}[V_n]$ is bounded from below, it can not explode to $-\infty$ in finite time. Thus, using the McKean's argument we conclude that $T_0^{(n)}=\infty$ a.s. (for every $n$) and consequently $\lambda_1^{(n)}(t)>0$ for every $t>0$. It means that $\supp(\mu_t^{(n)})\in[0,\infty)$ and the result follows from Theorem \ref{thm:main} and Proposition \ref{prop:Wishart:plus}.
	\end{proof}
	\begin{remark}
	   As in the previous section, we can easily obtain the analogous result for the real-valued case, by changing $\beta=2$ to $\beta=1$. We can generalized both results by taking more general limits of the initial distributions $\mu_0^{(n)}$ (as it is described in Remark \ref{rem:Wigner:diffinit}).
	\end{remark}
	

\section{Geometric Matrix Brownian motion}
\label{sec:GMBM}

We will consider the solutions to the following sequence of SDEs,
\begin{align}
\label{eq:GMBM:SDE}
dX^{(n)}_t=\sqrt{X_t^{(n)}}dW^{(n)}_t\sqrt{X_t^{(n)}}+\sqrt{X_t^{(n)}}d(W^{(n)}_t)^*\sqrt{X^{(n)}_t}+\alpha X_t^{(n)}dt,\qquad\text{$X^{(n)}_0\in \mathcal{H}_n^+$}.
\end{align}
We will also study its real-valued analogue on $\mathcal{S}_n^+$
\begin{align}
\label{eq:GMBM:SDE:real}
dX^{(n)}_t=\sqrt{X_t^{(n)}}dB^{(n)}_t\sqrt{X_t^{(n)}}+\sqrt{X_t^{(n)}}d(B^{(n)}_t)^T\sqrt{X^{(n)}_t}+\alpha X_t^{(n)}dt,\qquad X^{(n)}_0\in \mathcal{S}_n^+\/.
\end{align}
Note that for $n=1$ the above-given SDE reduces to 
$$
  dX_t  = 2X_tdB_t+\alpha X_tdt\/,\quad X_0=x_0\geq 0\/,
$$
with one-dimensional Brownian motion $(B_t)$. Its unique solution is the geometric Brownian motion $X_t = x_0\exp(2B_t+(\alpha-2)t)$. We begin this section with the following result. 
\begin{theorem}
\label{thm:GMBM:1}
 If $\mu_0^{(n)}\Rightarrow \delta_a$, $a>0$, then the sequence of the empirical measure-valued processes 
$$
\mu_t^{(n)}(dx) = \frac{1}{n}\sum_{i=1}^n \delta_{\lambda_i^{(n)}}(dx)
$$
 defined for a solution $(X_t^{(n)})$ to \eqref{eq:GMBM:SDE} converges in probability in the space $\mathcal{C}(\R_+,\textrm{Pr}(\R))$ to $(\mu^{geo}_t(dx))_{t\geq 0}$. For every $t\geq 0$ the measure $\mu_t^{geo}$ is supported on $(0,\infty)$ and $(\mu_t^{geo})_{t\geq 0}$ is the unique solution to 
\begin{eqnarray}
\label{eq:GMBM:integral}
  \langle\mu_{t},f\rangle &=& f(a)+\alpha \int_{0}^{t}ds\int_{\R_+}xf'(x)\mu_s(dx)+\beta \int_{0}^{t}ds\int_{\R_+^{2}}
	\frac{f^{\prime}(x)-f^{\prime}(y)}{x-y}xy\mu_{s}(dx)\mu_{s}(dy),
\end{eqnarray}
with $\beta=2$. The analogous result holds for solutions to \eqref{eq:GMBM:SDE:real} with $\beta=1$.
\end{theorem}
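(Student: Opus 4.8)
\textit{Strategy.} The plan is to deduce everything from Theorem~\ref{thm:main}, after (i) checking that the coefficients of \eqref{eq:GMBM:SDE} meet its hypotheses, (ii) showing that the eigenvalues of $X_t^{(n)}$ stay strictly positive, so that every subsequential limit is supported on $(0,\infty)$ and \eqref{limit} collapses to \eqref{eq:GMBM:integral}, and (iii) proving that \eqref{eq:GMBM:integral} has a unique solution by looking at its moments. For (i), the coefficients are $g_n(x)=h_n(x)=\sqrt{|x|}$ and $b_n(x)/n=\alpha x$, so $g_n^2(x)+h_n^2(x)=2|x|\le 2(1+|x|)$ and $|b_n(x)|/n\le|\alpha|(1+|x|)$, i.e. \eqref{eq:growth} holds with $K=\max\{2,|\alpha|\}$, while $g_n^2(x)=h_n^2(x)=|x|$ and $b_n(x)/n=\alpha x$ converge locally uniformly. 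Hence $\{(\mu_t^{(n)})\}$ is tight and every weak subsequential limit $(\mu_t)$ satisfies \eqref{limit} with $G(x,y)=2|x||y|$ and $b(x)=\alpha x$.

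\textit{Positivity.} Set $V_n(t)=\log e_n(X_t^{(n)})=\sum_{i=1}^n\log\lambda_i^{(n)}(t)$, which is well defined for $t<T_0^{(n)}:=\inf\{t:\lambda_1^{(n)}(t)=0\}$ since $X_0^{(n)}\in\mathcal{H}_n^+$; on $[0,T_0^{(n)})$ all eigenvalues are positive, so $g_n(\lambda_i^{(n)})h_n(\lambda_i^{(n)})=\lambda_i^{(n)}$ and $G_n(\lambda_i^{(n)},\lambda_j^{(n)})=2\lambda_i^{(n)}\lambda_j^{(n)}$. Feeding the scaled form of \eqref{eq:sympol:SDE} for $e_n$ (Remark~\ref{rem:pkn:SDE}) into It\^o's formula and using $\lambda_i^{(n)}e_{n-1}^{\overline{\lambda_i^{(n)}}}=e_n$ and $\lambda_i^{(n)}\lambda_j^{(n)}e_{n-2}^{\overline{\lambda_i^{(n)}},\overline{\lambda_j^{(n)}}}=e_n$, one checks that $e_n$ solves the linear SDE $de_n=\tfrac{2}{\sqrt n}\,e_n\sum_i d\nu_i+(\alpha n-\beta(n-1))e_n\,dt$, whence
\[
V_n(t)=V_n(0)+\frac{2}{\sqrt n}\sum_{i=1}^n\nu_i(t)+\bigl(\alpha n-\beta(n-1)-2\bigr)t .
\]
This is a Brownian motion with constant drift, so $V_n$ cannot reach $-\infty$ in finite time, $T_0^{(n)}=\infty$ a.s., and $\lambda_1^{(n)}(t)>0$ for all $t$. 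Therefore $\supp(\mu_t^{(n)})\subset(0,\infty)$, so $\supp(\mu_t)\subset[0,\infty)$ for every subsequential limit and \eqref{limit} becomes \eqref{eq:GMBM:integral}; that $\mu_t$ has no atom at $0$ follows from \eqref{eq:GMBM:integral} by testing against smooth functions concentrated near the origin — the right-hand side carries the factors $x$ and $xy$, so its contribution from a shrinking neighbourhood of $0$ vanishes — equivalently from the identity $\int_0^\infty\log x\,\mu_t(dx)=\log a+(\alpha-\beta)t$, obtained by letting $n\to\infty$ in $\tfrac1n V_n(t)=\int\log x\,\mu_t^{(n)}(dx)$.

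\textit{Uniqueness via moments.} Taking $f(x)=x^k$ in \eqref{eq:GMBM:integral} (admissible by Remark~\ref{rem:classf}, all moments of $\mu_t$ being finite by Theorem~\ref{thm:moments}) gives, for $m_k(t):=\int x^k\mu_t(dx)$, the relations $m_0\equiv1$ and
\[
m_k(t)=a^k+\alpha k\int_0^t m_k(s)\,ds+\beta k\sum_{l=0}^{k-2}\int_0^t m_{l+1}(s)m_{k-1-l}(s)\,ds,
\]
which for each $k\ge1$ is a linear ODE for $m_k$ with inhomogeneity depending only on $m_0,\dots,m_{k-1}$; by induction all the $m_k(t)$ are uniquely determined. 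To conclude that $\mu_t$ is determined by these moments I would re-run the inductive estimate from the proof of the second part of Theorem~\ref{thm:moments}: solving the ODE as $m_k(t)=e^{\alpha kt}\,(a^k+\beta k\int_0^t e^{-\alpha ks}\sum_l m_{l+1}(s)m_{k-1-l}(s)\,ds)$ only inserts the geometric-in-$k$ factor $e^{|\alpha|kT}$ (coming from the linear, rather than bounded, drift $b(x)=\alpha x$), and this is absorbed into a bound of the form $\sup_{t\le T}m_k(t)\le (k-1)!\,C_T^{\,2k-1}$ proved exactly as there. Hence each subsequential limit has a characteristic function analytic near the origin and is determined by its moments; all subsequential limits therefore coincide, the common value $(\mu_t^{geo})_{t\ge0}$ is the unique solution of \eqref{eq:GMBM:integral}, it is supported on $(0,\infty)$, and since the tight family $(\mu_t^{(n)})$ has this single cluster point it converges to $(\mu_t^{geo})$ in $\mathcal{C}(\R_+,\mathrm{Pr}(\R))$. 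The real-valued SDE \eqref{eq:GMBM:SDE:real} is handled identically with $\beta=1$.

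\textit{Main obstacle.} The delicate point is the uniqueness step: because $b(x)=\alpha x$ is unbounded on $[0,\infty)$, the moment-determinacy part of Theorem~\ref{thm:moments} cannot be invoked verbatim, and one must check that its inductive moment bound survives the extra exponential factor generated by the linear drift (heuristically, the limiting measures here are of "log-normal type", so controlling the moment growth is exactly what prevents indeterminacy). The positivity argument is the other step needing care — it is what makes sense of $\sqrt{X_t^{(n)}}$ in \eqref{eq:GMBM:SDE} in the first place and what legitimizes replacing $|x|,|y|$ by $x,y$ in \eqref{limit}.
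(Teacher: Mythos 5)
Your proposal is correct and follows essentially the same route as the paper: reduce to Theorem~\ref{thm:main}, show $T_0^{(n)}=\infty$ a.s.\ by identifying $e_n(t)=\det X_t^{(n)}$ as a geometric Brownian motion via \eqref{eq:sympol:SDE} (your explicit solution for $V_n$ is just the logarithm of the paper's closed-form SDE $de_n = 2e_n\,dB_t + (\alpha n - \beta(n-1))e_n\,dt$), derive the moment recursion from \eqref{eq:GMBM:integral}, observe it determines all $m_k(t)$, and establish moment-determinacy by adapting the inductive bound from Theorem~\ref{thm:moments} to absorb the $e^{k\alpha t}$ factor produced by the linear drift. The paper makes this last step precise by writing $m_k(t) = a^k w_k(t\beta)e^{k\alpha t}$ and proving $w_k(x)\le k!\,9^{k-1}(1+x)^{k-1}$ by induction, which yields $\limsup_k\sqrt[k]{m_k(t)/k!}<\infty$; your sketch, bounding $m_k$ directly as $(k-1)!\,C_T^{2k-1}$, amounts to the same estimate. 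The one place where you go slightly beyond the paper is the remark on absence of an atom at $0$ — the paper's positivity argument is stated at the prelimit level and the support claim for the limit is implicit in the explicit moment formula — but this is a side observation and your uniqueness argument does not depend on it.
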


\begin{proof}
   We begin with considering the determinants $e_n(t) = \det(X_t^{(n)}(t))=\lambda_1^{(n)}(t)\cdot\ldots\cdot\lambda_n^{(n)}(t)$, which by \eqref{eq:sympol:SDE} are described by the following SDE
	\begin{eqnarray*}	  
	  de_n = 2e_ndB_t+e_n(\alpha n-\beta (n-1))dt 
	\end{eqnarray*}
	for every $t<T_0^{(n)}$, with $T_0^{(n)}=\inf\{t\geq 0: \lambda_1^{(n)}(t)=0\}$. Since $e_n(0)$ is assumed to be positive and $e_n$ is the geometric Brownian motion, we get $T_0^{(n)}=\infty$ a.s. Consequently, the measures $\mu_t^{(n)}$ are supported in $(0,\infty)$ for every $t\geq 0$. To finish the proof, it is enough to show that there exists the unique solution to \eqref{eq:GMBM:integral}, i.e. the general \eqref{limit} with $g(x)=\sqrt{|x|}$, $h(x)=\sqrt{|x|}$, $b(x)=\alpha\, x$ for measures $\mu_t$ with supports in $\R_+$. Denoting by $m_k(t)$ the $k$-th moment of a solution $\mu_t$ to \eqref{eq:GMBM:integral}, we can easily get that 
	\begin{eqnarray*}
		 m_k(t) &=& a^k+\alpha k \int_0^t m_k(s)ds+\beta k\sum_{i=0}^{k-2}\int_0^t m_{i+1}(s)m_{k-1-i}(s)ds\/,
	\end{eqnarray*}
	for $k=0,1,\ldots$ and $t\geq 0$. Differentiating both sides with respect to $t$ we obtain the equivalent system of first order recurrence differential equations
	\begin{eqnarray*}
	   m_k'(t) &=& \alpha k\, m_k(t)+\beta k\, \sum_{i=0}^{k-2} m_{i+1}(t)m_{k-1-i}(t)\/, \qquad m_k(0)=a^k\/,
	\end{eqnarray*}
	which can be solved to show that 
	$$
	   m_k(t) = a^k w_k(t\beta) e^{k\alpha t}\/,\quad t\geq 0\/,k=0,1,2\ldots
	$$
	Here $w_k(x)$ denote polynomials of degree $(k-1)$ uniquely defined by
		\begin{eqnarray}
		 \label{eq:GMBM:poly}
	   w_k'(x) =  k\sum_{i=0}^{k-2}w_{i+1}(x)w_{k-1-i}(x)\/,\quad  w_k(0)&=&1\/,
	\end{eqnarray}
	where we use the convention that the sum over an empty set is equal to zero. This shows that the solutions to \eqref{eq:GMBM:integral} have the same moments $m_k(t)$. Moreover, we have
	\begin{equation}
	\label{eq:GMBM:polybound}
	w_k(x)\leq k!\,9^{k-1}\,(1+x)^{k-1}\/,\quad k=1,2,\ldots
	\end{equation}
	for every $x\geq 0$. Since $w_1(x)=1$, \eqref{eq:GMBM:polybound} is obvious for $k=1$. Assuming that \eqref{eq:GMBM:polybound} holds for every $l<k$, $k\geq 2$, we use \eqref{eq:GMBM:poly} to get
	\begin{eqnarray*}
	   w_k(x)-w_k(0) &=& \int_0^x w_k'(y)dy \leq k\sum_{i=0}^{k-2}\int_0^x w_{i+1}(y)w_{k-1-i}(y)dy\\
		&\leq& k\,9^{k-2} \sum_{i=0}^{k-2}(i+1)!(k-1-i)!\int_0^x (1+y)^{k-2}dy\leq {k! 9^{k-2}(1+x)^{k-1}}\sum_{i=0}^{k-2}\frac{1}{\binom{k-1}{i}}
	\end{eqnarray*}
	and the result follows from the mathematical induction and the fact that $\sum_{i=0}^{k-2}1/\binom{k-1}{i}\leq \sum_{i=0}^{k-1}1/\binom{k-1}{i}\leq 3$ and $1+3k! 9^{k-2}(1+x)^{k-1}\leq k! 9^{k-1}(1+x)^{k-1}$. As a consequence we get
	$$
	  \limsup_{k\to \infty} \sqrt[k]{\frac{m_k(t)}{k!}}\leq 9a(1+t\beta)\, e^{\alpha t}<\infty\/,
	$$
	which implies that $\mu_t$ is uniquely characterized by its moments. This ends the proof. 
\end{proof}
	
\noindent	The result can be generalized in the following way.
	
	\begin{theorem}
  \label{thm:GMBM:class}
   Let $g_n$, $h_n$ and $b_n$ be continuous functions and $(X_t^{(n)})$ be a solution to 
	$$
	  dX_t^{(n)} = g_n(X_t^{(n)})dW_t^{(n)}h_n(X_t^{(n)})+h_n(X_t^{(n)})d(W_t^{(n)})^*g_n(X_t^{(n)}) +\frac{1}{n}b_n(X_t^{{n}})dt\/,\quad X_0^{(n)}\in\mathcal{H}_n^+\/,
	$$
	where $W_t^{(n)}= n^{-1/2}W_t$ with $W_t$ being a $n\times n$ complex matrix Brownian motion. Assume that \eqref{eq:growth} holds, $g_n^2(x)\to x$, $h_n^2(x)\to x$ and $b_n(x)/n\to \alpha x $, locally uniformly on $[0,\infty)$ as $n\to \infty$. If additionally there exist $c_1(n), c_2(n), c_3(n)>0$ such that for every $x\geq 0$ we have
	\begin{equation}
	   \label{eq:GMBM:bounds}
	   g_n^2(x) \leq c_1(n)x\/,\quad
		 h_n^2(x) \leq  c_2(n)x\/,\quad
		 b_n(x) \geq c_3(n)x \/,
	\end{equation}
	 then the related empirical measure-valued process
	$$
	  \mu_t^{(n)}(dx) = \frac{1}{n}\sum_{i=1}^n \delta_{\lambda_i^{(n)}}(dx)
	$$
	tends in probability in the space $\mathcal{C}(\R_+,\textrm{Pr}(\R))$ to $(\mu_t^{geo})_{t\geq 0}$ defined in Theorem \ref{thm:GMBM:1} for $\beta=2$, whenever $\mu_0^{(n)}\Rightarrow \delta_a$, $a>0$, such that \eqref{eq:8momentbound} holds. The analogous result holds in the real-valued case with $\beta=1$.
\end{theorem}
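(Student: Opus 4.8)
The plan is to follow the template of Theorem~\ref{thm:Wishart:class}: invoke Theorem~\ref{thm:main} for tightness and for the characterization of subsequential limits, invoke the moment‑based uniqueness already obtained in Theorem~\ref{thm:GMBM:1}, and supply the one genuinely new ingredient — a McKean‑type argument showing that the conditions \eqref{eq:GMBM:bounds} force each $X_t^{(n)}$ to stay positive definite, so that every subsequential limit is carried by $[0,\infty)$ and is therefore pinned down to be $(\mu_t^{geo})_{t\ge0}$.

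First I would check that Theorem~\ref{thm:main} applies: \eqref{eq:growth} and \eqref{eq:8momentbound} are assumed, and the local uniform convergences $g_n^2(x)\to x$, $h_n^2(x)\to x$, $b_n(x)/n\to\alpha x$ hold on $[0,\infty)$. Since, by the next step, the eigenvalues never leave $(0,\infty)$, the values of the coefficients off $[0,\infty)$ are immaterial and one may regard the convergences as holding on all of $\R$. With $g^2(x)=h^2(x)=x$ one has $G(x,y)=g^2(x)h^2(y)+g^2(y)h^2(x)=2xy$ and $b(x)=\alpha x$, so \eqref{limit} for a subsequential limit is precisely \eqref{eq:GMBM:integral} with $\mu_0=\delta_a$ (because $\mu_0^{(n)}\Rightarrow\delta_a$). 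Theorem~\ref{thm:main} thus gives tightness of $\{(\mu_t^{(n)})_{t\ge0}:n\ge1\}$ and tells us that every weak subsequential limit solves \eqref{eq:GMBM:integral}.

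The core step is to prove $\lambda_1^{(n)}(t)>0$ for all $t>0$. Put $V_n(t)=\log e_n(t)$, where $e_n(t)=\det X_t^{(n)}=\lambda_1^{(n)}(t)\cdots\lambda_n^{(n)}(t)$, well defined for $t<T_0^{(n)}:=\inf\{t\ge0:\lambda_1^{(n)}(t)=0\}$. The scaled version of \eqref{eq:sympol:SDE} with $k=n$ together with It\^o's formula gives an equation for $dV_n$ of exactly the form \eqref{eq:Vn:SDE}. Using \eqref{eq:GMBM:bounds} together with the identities $\lambda_i e_{n-1}^{\overline{\lambda_i}}=e_n$, $\lambda_i\lambda_j e_{n-2}^{\overline{\lambda_i},\overline{\lambda_j}}=e_n$ for $i\ne j$, and $\sum_i e_{n-1}^{\overline{\lambda_i}}=e_{n-1}$, one gets $g_n^2(\lambda_i)h_n^2(\lambda_i)(e_{n-1}^{\overline{\lambda_i}})^2\le c_1(n)c_2(n)e_n^2$, $\sum_i\tfrac{b_n(\lambda_i)}{n}e_{n-1}^{\overline{\lambda_i}}\ge c_3(n)e_n$, and $\tfrac{\beta}{n}\sum_{i<j}G_n(\lambda_i,\lambda_j)e_{n-2}^{\overline{\lambda_i},\overline{\lambda_j}}\le\beta(n-1)c_1(n)c_2(n)e_n$; hence the finite‑variation part of $V_n$ is bounded below by the constant $c_3(n)-c_1(n)c_2(n)(\beta(n-1)+2)$. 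Since a constant lower bound on the drift is all that McKean's argument needs, $V_n$ cannot be driven to $-\infty$ in finite time, so $T_0^{(n)}=\infty$ a.s.; thus each $X_t^{(n)}$ stays positive definite and $\supp(\mu_t^{(n)})\subset(0,\infty)$ for every $t\ge0$. (In contrast with Theorem~\ref{thm:Wishart:class}, the quadratic growth of $g_n^2h_n^2$ makes this lower bound a genuine constant, so no inequality relating $c_3(n)$ to $c_1(n)c_2(n)$ is required.)

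Finally I would assemble the pieces. Any weak subsequential limit $(\mu_t)_{t\ge0}$ of $(\mu_t^{(n)})$ is supported on $[0,\infty)$ and solves \eqref{eq:GMBM:integral} with $\mu_0=\delta_a$; by the uniqueness established in Theorem~\ref{thm:GMBM:1} it must equal $(\mu_t^{geo})_{t\ge0}$. Combined with the tightness from Theorem~\ref{thm:main}, this forces the whole sequence to converge to the deterministic limit $(\mu_t^{geo})_{t\ge0}$ in $\mathcal{C}(\R_+,\mathrm{Pr}(\R))$, i.e. in probability. The real‑valued statement for \eqref{eq:GMBM:SDE:real} follows verbatim with $\beta=1$. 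I expect the only delicate point to be the drift estimate for $V_n$ — keeping the signs and the elementary symmetric‑polynomial identities straight — while the rest is a direct application of Theorems~\ref{thm:main} and \ref{thm:GMBM:1}.
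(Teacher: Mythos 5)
Your proposal is correct and follows the paper's own route: apply Theorem~\ref{thm:main} for tightness and the limiting equation, reuse the moment-uniqueness from Theorem~\ref{thm:GMBM:1}, and use the $V_n=\log e_n$ McKean argument (identical to that in Theorem~\ref{thm:Wishart:class}) to keep the eigenvalues strictly positive. Your spelled-out symmetric-polynomial identities and your observation that the drift lower bound $c_3(n)-c_1(n)c_2(n)(\beta(n-1)+2)$ is a constant whose sign is irrelevant — so no further condition on the $c_i(n)$ is needed here, unlike in the Wishart case — match the paper exactly, just in greater detail.
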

	\begin{proof}
	   It is enough to show that the conditions \eqref{eq:GMBM:bounds} ensure that the process $X_t^{(n)}$ stays in $\mathcal{H}^{+}_n$ whenever $X_0^{(n)}\in\mathcal{H}^{+}_n$. Then the result follows from Theorem \ref{thm:main} and the proof of Theorem \ref{thm:GMBM:1}. Following the argument from the proof of Theorem \ref{thm:Wishart:class} we consider $V_n = \log e_n$ and use \eqref{eq:GMBM:bounds} in  \eqref{eq:Vn:SDE} to obtain that the drift part of $V_n$, which is equal to 
		\begin{eqnarray}
		\label{eq:driftVn_general}
		  -\frac{2}{n\,e_n^2}\sum_{i=1}^n g_n^2(\lambda_i^{(n)})h_n^2(\lambda_i^{(n)})(e_{n-1}^{\overline{\lambda_i^{(n)}}})^2
				+\frac{1}{ne_n}\left[\sum_{i=1}^n b_n(\lambda_i^{(n)})e_{n-1}^{\overline{\lambda_i^{(n)}}}-\beta\sum_{i<j}G_n(\lambda_i^{(n)},\lambda_j^{(n)})e_{n-2}^{\overline{\lambda_i^{(n)}},\overline{\lambda_j^{(n)}}}\right]
				\end{eqnarray}
			is bounded from below by
		$$		
			 -2c_1(n)c_2(n)+c_3(n)-\beta c_1(n)c_2(n)(n-1)\/,
		$$
		for every $t<T_0^{(n)}$, where $T_0^{(n)}$ is the first hitting time of zero by the first eigenvalue of $X_t^{(n)}$. This together with the McKean's argument gives $T_0^{(n)}=\infty$ a.s. which ends the proof.
	\end{proof}
	

\section{Jacobi processes}
\label{sec:Jacobi}
The last presented universality class relates to the Jacobi processes, i.e. solutions to the following SDE derived by Doumerc in \cite{bib:Doumerc:2005} (see also \cite{bib:Demni:2010}) 
\begin{align}
\label{eq:Jacobi:hermitian}
dX^{(n)}_t=\sqrt{|X^{(n)}_t|}dW^{(n)}_t\sqrt{|I_n-X_t|}+\sqrt{|I-X^{(n)}_t|}d(W^{(n)}_t)^*\sqrt{|X^{(n)}_t|}+((p+q)X^{(n)}_t+pI_n)dt,
\end{align}
with initial condition $X_0^{(n)}\in\mathcal{H}_n^+$. The real-valued case is described by the corresponding SDE
\begin{align}
\label{eq:Jacobi:real}
dX^{(n)}_t=\sqrt{|X^{(n)}_t|}dB^{(n)}_t\sqrt{|I_n-X_t|}+\sqrt{|I-X^{(n)}_t|}d(B^{(n)}_t)^T\sqrt{|X^{(n)}_t|}+((p+q)X^{(n)}_t+p I_n)dt,
\end{align}
with the starting point $X_0^{(n)}\in\mathcal{S}_n^+$. The two-dimensional parameter $(p,q)=(p(n),q(n))$ is called the dimension of the Jacobi process being the unique solution to \eqref{eq:Jacobi:hermitian} or \eqref{eq:Jacobi:real}. However it is usually assumed that $p\wedge q\geq n-1+2/\beta$ and this condition ensures that the eigenvalues $\lambda_1^{(n)}\leq\ldots\leq  \lambda_n^{(n)}$ remain in $[0,1]$
if they start from $[0,1]$ (see \cite{bib:Demni:2010}). In particular, we can omit the absolute values in \eqref{eq:Jacobi:hermitian}, \eqref{eq:Jacobi:real} and deduce that the empirical measures $\mu_t^{(n)}(dx)$ are supported in $[0,1]$. The corresponding integral equation \eqref{limit} for $g(x)=\sqrt{x}$, $h(x)=\sqrt{1-x}$ and $b(x) = p+(p+q)x$ has the following form
\begin{align}
\label{eq:Jacobi:integral}
\langle\mu_{t},f\rangle =& \langle\mu_{0},f\rangle+\int_{0}^{t}ds\int_{[0,1]}(p+(p+q)x)f'(x)\mu_s(dx)\\
&+\beta \int_{0}^{t}ds\int_{[0,1]^2}
\frac{f^{\prime}(x)-f^{\prime}(y)}{x-y}x(1-y)\mu_{s}(dx)\mu_{s}(dy).\nonumber
\end{align}

We begin with showing that there exists the unique solution to the above-given equation and consequently we have convergence of the empirical measure-valued processes. 

\begin{theorem}
\label{thm:Jacobi:1}
  If $\mu_0^{(n)}$ is supported in $[0,1]$ for every $n\in\N$ and $\mu_0^{(n)}\Rightarrow \delta_a$, $a>0$, then the sequence of the empirical measure-valued processes 
$$
\mu_t^{(n)}(dx) = \frac{1}{n}\sum_{i=1}^n \delta_{\lambda_i^{(n)}}(dx)
$$
 defined for a solution $X_t^{(n)}$ to \eqref{eq:Jacobi:hermitian} (or \eqref{eq:Jacobi:real}) such that 
$$
  p(n)\wedge q(n)\geq n-1+2/\beta
$$ 
converges in probability in the space $\mathcal{C}(\R_+,\textrm{Pr}(\R))$ to $(\mu^{Jac}_t(dx))_{t\geq 0}$. For every $t\geq 0$ the measure $\mu_t^{Jac}$ is supported on $[0,1]$ and $(\mu_t^{Jac})_{t\geq 0}$ is the unique solution to \eqref{eq:Jacobi:integral} with $\beta=2$ (or $\beta=1$).
\end{theorem}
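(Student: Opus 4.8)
The plan is to reuse the scheme of the proofs of Theorems \ref{thm:Wigner1}, \ref{thm:Wishart:class} and \ref{thm:GMBM:1}: one reduces the assertion, via Theorem \ref{thm:main}, to the uniqueness of the solution of \eqref{eq:Jacobi:integral} within an appropriate class of measures, and then propagates this uniqueness through the moments. First I would verify that Theorem \ref{thm:main} applies: here $g_n(x)=\sqrt{|x|}$, $h_n(x)=\sqrt{|1-x|}$ and $b_n(x)/n$ satisfy \eqref{eq:growth}, the initial measures are supported in $[0,1]$ so \eqref{eq:8momentbound} is immediate, and $g_n^2(x)\to|x|$, $h_n^2(x)\to|1-x|$, $b_n(x)/n\to b(x)=p+(p+q)x$ locally uniformly. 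Next, by the non-exit property of Jacobi processes (see \cite{bib:Demni:2010}), the hypothesis $p(n)\wedge q(n)\ge n-1+2/\beta$ guarantees that the eigenvalues of $X_t^{(n)}$ never leave $[0,1]$; hence every $\mu_t^{(n)}$, and therefore---since the set of probability measures carried by $[0,1]$ is weakly closed in $\mathrm{Pr}(\R)$---every weak subsequential limit $(\mu_t)_{t\ge0}$ of $(\mu_t^{(n)})$, is supported in $[0,1]$. In particular the absolute values in \eqref{eq:Jacobi:hermitian} and \eqref{eq:Jacobi:real} are immaterial and such a limit solves \eqref{eq:Jacobi:integral}, so it suffices to show that \eqref{eq:Jacobi:integral} has at most one solution $(\mu_t)_{t\ge0}$ with $\supp(\mu_t)\subseteq[0,1]$.

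For this I would test \eqref{eq:Jacobi:integral} against $f(x)=x^k$. Setting $m_k(t)=\langle\mu_t,x^k\rangle$ (so $m_0\equiv1$), and using $(f'(x)-f'(y))/(x-y)=k\sum_{l=0}^{k-2}x^ly^{k-2-l}$ together with $x^ly^{k-2-l}\,x(1-y)=x^{l+1}y^{k-2-l}-x^{l+1}y^{k-1-l}$, one obtains for $k\ge1$ the recursive identity
\begin{equation*}
m_k(t)=a^k+k\int_0^t\bigl(p\,m_{k-1}(s)+(p+q)m_k(s)\bigr)ds+\beta k\int_0^t\sum_{l=0}^{k-2}m_{l+1}(s)\bigl(m_{k-2-l}(s)-m_{k-1-l}(s)\bigr)ds.
\end{equation*}
Differentiating in $t$ turns this into a first-order linear ODE $m_k'(t)=k(p+q)m_k(t)+F_k(t)$ with $m_k(0)=a^k$, whose inhomogeneity $F_k$ depends only on $m_0,\dots,m_{k-1}$ (every index $l+1$, $k-2-l$, $k-1-l$ occurring is $\le k-1$). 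Hence by induction on $k$, starting from $m_0\equiv1$, all the functions $t\mapsto m_k(t)$ are uniquely determined, independently of the solution.

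Finally, since each $\mu_t$ is a probability measure on the compact interval $[0,1]$, it is uniquely determined by its moment sequence---the Hausdorff moment problem on a bounded interval is determinate, and indeed $m_k(t)\le1$ for all $k$, so $\limsup_k\sqrt[k]{m_k(t)/k!}=0$ and the characteristic function of $\mu_t$ is entire. Therefore any two $[0,1]$-supported solutions of \eqref{eq:Jacobi:integral} coincide, and Theorem \ref{thm:main} (applied as in the proof of Theorem \ref{thm:Wishart:class}, where one restricts a priori to solutions supported in the relevant set) yields the convergence of $(\mu_t^{(n)})$ to $(\mu_t^{Jac})$ in probability in $\mathcal{C}(\R_+,\mathrm{Pr}(\R))$; the real-valued case $\beta=1$ is identical. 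I expect the only genuinely delicate point to be the reduction in the first paragraph---that one may confine attention to $[0,1]$-supported solutions of \eqref{eq:Jacobi:integral}---which rests entirely on the non-exit property forced by $p(n)\wedge q(n)\ge n-1+2/\beta$; once that is in hand, the compactness of the support makes the moment problem determinate for free, so the remaining steps are elementary, in contrast to the geometric matrix Brownian motion of Theorem \ref{thm:GMBM:1}.
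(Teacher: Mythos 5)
Your proof follows essentially the same route as the paper's: apply Theorem \ref{thm:main}, use the non-exit property under $p(n)\wedge q(n)\ge n-1+2/\beta$ (cited from \cite{bib:Demni:2010}) to confine the empirical measures and hence all subsequential limits to $[0,1]$, derive the recursive moment identities, and conclude via determinacy of the Hausdorff moment problem on a bounded interval. One small remark: your moment recursion, with the minus sign $m_{l+1}(m_{k-2-l}-m_{k-1-l})$ in the interaction term, is the one that actually follows from \eqref{eq:Jacobi:integral} (since $x^ly^{k-2-l}\cdot x(1-y)=x^{l+1}y^{k-2-l}-x^{l+1}y^{k-1-l}$); the recursion printed in the paper carries a sign typo, but this does not affect the inductive uniqueness argument in either version.
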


\begin{proof}
 The proof follows in the similar way as in Theorem \ref{thm:GMBM:1}. We consider moments $m_k(t)$ of a solution to \eqref{eq:Jacobi:integral} and show that 
\begin{align*}
  m_k(t) =& a^k +pk\int_0^t m_{k-1}(s)ds+(p+q)k\int_0^t m_k(s)ds\\
	&+\beta\,k\sum_{i=0}^{k-2}\int_0^t\left(  m_{i+1}(s)m_{k-2-i}(s)+m_{i+1}(s)m_{k-1-i}(s)\right)ds\/,
\end{align*}
for every $k=0,1,2,\ldots$. It implies, in particular, that all the solutions have the same moments and since we only consider measures $\mu_t$ with bounded supports (in $[0,1]$), they are uniquely determined by its moments. Thus the uniqueness of solutions to \eqref{eq:Jacobi:integral} follows and the convergence is the consequence of Theorem \ref{thm:main}.
\end{proof}

\noindent Moreover, using Theorem \ref{thm:main} we can generalize the result as follows. 

\begin{theorem}
	\label{thm:Jacobi:class}
	Let $g_n$, $h_n$ and $b_n$ be continuous functions and $X_t^{(n)}$ be a solution to 
	\begin{equation}
	\label{eq:Jacobi:general}
	dX_t^{(n)} = g_n(X_t^{(n)})dW_t^{(n)}h_n(X_t^{(n)})+h_n(X_t^{(n)})d(W_t^{(n)})^*g_n(X_t^{(n)}) +\frac{1}{n}b_n(X_t^{{n}})dt\/,\quad X_0^{(n)}\in\mathcal{H}_n^+\/,
	\end{equation}
	where $W_t^{(n)}= n^{-1/2}W_t$ with $W_t$ being an $n\times n$ complex matrix Brownian motion and $X_0^{(n)}\in\mathcal{H}_n$ such that $0<\lambda_1^{(n)}(0)\leq \ldots\leq \lambda_n^{(n)}(0)<1$. Assume that \eqref{eq:growth} holds, $g_n^2(x)\to x$, $h_n^2(x)\to 1-x$ and $b_n(x)/n\to (p+q)x+q$, uniformly on $[0,1]$ as $n\to \infty$. If additionally there exist $c_1(n), c_2(n), c_3(n)>0$ such that for every $x\in[0,1]$ we have
	\begin{equation}
	\label{eq:Jacobi:bounds}
	g_n^2(x) \leq c_1(n)x\/,\quad
	h_n^2(x) \leq  c_2(n)(1-x)\/,\quad
	b_n(x) \geq c_3(n) 
	\end{equation}
	and $c_3(n)\geq c_1(n)c_2(n)(\beta(n-1)+2)$ holds, then the related empirical measure-valued process
	$$
	\mu_t^{(n)}(dx) = \frac{1}{n}\sum_{i=1}^n \delta_{\lambda_i^{(n)}}(dx)
	$$
	tends in probability in the space $\mathcal{C}(\R_+,\textrm{Pr}(\R))$ to $(\mu_t^{Jac})_{t\geq 0}$ defined in Theorem \ref{thm:Jacobi:1}, whenever $\mu_0^{(n)}$ is supported in $[0,1]$ and $\mu_0^{(n)}\Rightarrow \delta_a$, $a>0$. The analogous result holds in the real-valued case with $\beta=1$.
\end{theorem}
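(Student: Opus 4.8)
The plan is to follow the pattern of Theorems~\ref{thm:Wishart:class} and~\ref{thm:GMBM:class}: via Theorem~\ref{thm:main} the statement reduces to two ingredients, namely confinement of the spectrum of $X^{(n)}_t$ to $[0,1]$ for every $t>0$, and the uniqueness of solutions to \eqref{eq:Jacobi:integral} already established in Theorem~\ref{thm:Jacobi:1}. Indeed, \eqref{eq:growth} is assumed, and once $\supp(\mu^{(n)}_t)\subseteq[0,1]$ for all $t$, condition \eqref{eq:8momentbound} is automatic because the initial spectra lie in $[0,1]$; the locally uniform convergences $g_n^2(x)\to x$, $h_n^2(x)\to 1-x$ and $b_n(x)/n\to$ (the affine drift coefficient of \eqref{eq:Jacobi:integral}) on $[0,1]$ --- the only range that matters --- then put us in the setting of Theorem~\ref{thm:main} with $G(x,y)=x(1-y)+y(1-x)$. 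Theorem~\ref{thm:main} yields tightness of $\{(\mu^{(n)}_t)\}$ and shows that every subsequential weak limit $(\mu_t)$ satisfies \eqref{eq:Jacobi:integral}; by Theorem~\ref{thm:Jacobi:1} this solution is unique and equals $(\mu^{Jac}_t)$, so the last assertion of Theorem~\ref{thm:main} upgrades the subsequential weak convergence to convergence in probability in $\mathcal{C}(\R_+,\mathrm{Pr}(\R))$. The real-valued case \eqref{eq:Jacobi:real} is handled identically with $\beta=1$.

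Thus everything comes down to showing that, under \eqref{eq:Jacobi:bounds} together with $c_3(n)\ge c_1(n)c_2(n)(\beta(n-1)+2)$, one has $0<\lambda_1^{(n)}(t)\le\cdots\le\lambda_n^{(n)}(t)<1$ for all $t>0$ a.s. For the left endpoint I would reuse the McKean argument of Theorem~\ref{thm:Wishart:class} verbatim: set $V_n=\log e_n$ with $e_n=\lambda_1^{(n)}\cdots\lambda_n^{(n)}=\det X^{(n)}_t$, defined up to $T_0^{(n)}=\inf\{t:\lambda_1^{(n)}(t)=0\}$, apply \eqref{eq:Vn:SDE} together with the identities $\lambda_i^{(n)}e_{n-1}^{\overline{\lambda_i^{(n)}}}=e_n$, $\sum_i e_{n-1}^{\overline{\lambda_i^{(n)}}}=e_{n-1}$, $\sum_{i<j}(\lambda_i^{(n)}+\lambda_j^{(n)})e_{n-2}^{\overline{\lambda_i^{(n)}},\overline{\lambda_j^{(n)}}}=(n-1)e_{n-1}$, and use $g_n^2(x)\le c_1(n)x$, $h_n^2(x)\le c_2(n)$, $b_n(x)\ge c_3(n)$ on $[0,1]$ to bound the drift of $V_n$ from below by $\tfrac{e_{n-1}}{ne_n}\bigl(c_3(n)-c_1(n)c_2(n)(\beta(n-1)+2)\bigr)\ge 0$. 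Hence the finite-variation part of $V_n$ cannot run to $-\infty$ in finite time, and McKean's argument gives $T_0^{(n)}=\infty$ a.s.

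For the right endpoint I would exploit the symmetry $x\leftrightarrow 1-x$ of the Jacobi model. The process $Y^{(n)}_t:=I_n-X^{(n)}_t$ has spectrum $\{1-\lambda_i^{(n)}(t)\}$, and writing $\tilde g_n(x)=g_n(1-x)$, $\tilde h_n(x)=h_n(1-x)$, $\tilde b_n(x)=b_n(1-x)$ and replacing $W^{(n)}$ by $-W^{(n)}$, it solves an SDE of the form \eqref{eq:Jacobi:general} with martingale coefficients $\tilde g_n,\tilde h_n$ --- whose roles are now interchanged, since on $[0,1]$ the bounds \eqref{eq:Jacobi:bounds} transform into $\tilde h_n^2(x)\le c_2(n)x$ and $\tilde g_n^2(x)\le c_1(n)(1-x)\le c_1(n)$ --- and with drift $-\tilde b_n$. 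Running the same $\log\det$/McKean analysis on $Y^{(n)}_t$ then shows that the smallest eigenvalue of $Y^{(n)}_t$, i.e. $1-\lambda_n^{(n)}(t)$, never hits $0$; combined with the previous step this gives $\supp(\mu^{(n)}_t)\subseteq(0,1)$ and completes the proof.

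I expect the right endpoint to be the delicate step: after the substitution $Y^{(n)}=I_n-X^{(n)}$ the McKean comparison must absorb the drift contribution of $-\tilde b_n$ in the correct direction, which forces one to combine the one-sided bounds of \eqref{eq:Jacobi:bounds} with the a priori control of $b_n$ on the compact interval $[0,1]$ provided by \eqref{eq:growth}, while keeping track of the fact that the martingale coefficient $\tilde g_n\tilde h_n$ vanishes at the endpoint with the right rate. Everything else is a routine transcription of the Wishart and geometric-Brownian-motion arguments.
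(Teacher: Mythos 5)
Your reduction of the theorem to confinement of the spectrum in $[0,1]$ plus the uniqueness result of Theorem~\ref{thm:Jacobi:1} is the same route the paper takes, and your treatment of the left endpoint is sound: the $V_n=\log e_n$ estimate $\mathrm{drift}[V_n]\ge\frac{e_{n-1}}{ne_n}\left(c_3(n)-c_1(n)c_2(n)(\beta(n-1)+2)\right)\ge 0$ follows from \eqref{eq:Vn:SDE}, the relations between the $e_k$'s and the weakened bound $h_n^2\le c_2(n)$, and the McKean argument then gives $T_0^{(n)}=\infty$ a.s. The paper uses the full Jacobi bound $h_n^2(x)\le c_2(n)(1-x)$ to obtain the slightly sharper lower bound $c_3(n)$, but either version closes the left-endpoint step.

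Your worry about the right endpoint is not merely "delicate": it is a genuine gap, and it is left open in your proposal. After the substitution $Y^{(n)}_t=I_n-X^{(n)}_t$ with $\tilde g_n(x)=g_n(1-x)$, $\tilde h_n(x)=h_n(1-x)$, $\tilde b_n(x)=b_n(1-x)$, the drift of $Y^{(n)}$ is $-\frac1n\tilde b_n(Y^{(n)}_t)$, so the bound $b_n\ge c_3(n)>0$ on $[0,1]$ (hence $\tilde b_n\ge c_3(n)$) enters the $\log\det Y^{(n)}$ estimate with the wrong sign; the McKean argument there would require $-\tilde b_n\ge c_3(n)$, i.e. $b_n\le -c_3(n)$, which contradicts the stated hypothesis. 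This cannot be salvaged by "a priori control of $b_n$ from \eqref{eq:growth}" as you suggest: $b_n\ge c_3(n)>0$ actively pushes the top eigenvalue toward $1$. For $n=1$, taking $g_1^2(x)=x$, $h_1^2(x)=1-x$, $b_1(x)=2$ satisfies \eqref{eq:Jacobi:bounds} with $c_1=c_2=1$, $c_3=2\ge c_1c_2(\beta(n-1)+2)$, yet $dX=2\sqrt{X(1-X)}\,dW+2\,dt$ reaches $1$ in finite time a.s. (near $1$, $Y=1-X$ behaves like a squared Bessel process of negative dimension), so confinement fails. The drift hypothesis needs a two-sided form, positive near $0$ and negative near $1$ as for the true Jacobi drift $p-(p+q)x$, for both McKean arguments to close. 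You should also be aware that the paper's own proof slips at precisely this point: it asserts that $Y^{(n)}$ solves an SDE of the form \eqref{eq:Jacobi:general} with drift coefficient $\overline b_n(x)=b_n(1-x)$ rather than $-b_n(1-x)$, and then imports the estimate $\overline b_n\ge c_3(n)$ as if the sign were favorable.
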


\begin{proof}
To show that $0<\lambda_i^{(n)}(t)\leq \ldots\lambda_n^{(n)}(t)<1$ for every $t\geq 0$ we consider $V_n(t)=\log e_n(t)$ and $\overline{V}_n(t)=\prod_{i=1}^n (1-\lambda_i^{(n)})$, which are well-defined for $t<T_1^{(n)}\wedge T_0^{(n)}$, where
$$
T_0^{(n)}:=\inf\{t\geq 0:\lambda_1^{(n)}(t)=0\},\quad\text{and}\quad T_1^{(n)}:=\inf\{t\geq 0:\lambda_n^{(n)}(t)=1\}.
$$
We use \eqref{eq:Jacobi:bounds} to find the lower bound of the drift term of $V_n$ for $t<T_1^{(n)}\wedge T_0^{(n)}$ of the following form
\begin{align*}
   \textrm{drift}[V_n]&\geq \frac{1}{ne_n}\left[c_3(n)e_{n-1}+c_1(n)c_2(n)(e_{n-1}-ne_n)(\beta(n-1)+2)\right]\\
	& \geq c_3(n)+\frac{e_{n-1}-ne_n}{ne_n}(c_3(n)-c_1(n)c_2(n)(\beta(n-1)+2))\geq c_3(n)\/,
\end{align*}
where the last inequality follows from the fact that $e_{n-1}-ne_n\geq 0$ whenever the eigenvalues are in $[0,1]$. To deal with $\overline{V}_n$ observe that the process $Y_t^{(n)}=I_n-X_t^{(n)}$ is a solution to \eqref{eq:Jacobi:general} with $\overline{g_n}(x)=g_n(1-x)$, $\overline{h_n}(x)=h_n(1-x)$ and $\overline{b_n}(x)=b_n(1-x)$ with the initial point $Y_0^{(n)}$ having all the eigenvalues in $(0,1)$. Since \eqref{eq:Jacobi:bounds} reads then as
\begin{equation*}
  	\overline{g_n}^2(x) \leq c_1(n)(1-x)\/,\quad
	\overline{h_n}^2(x) \leq  c_2(n)x\/,\quad
	\overline{b_n}(x) \geq c_3(n)
\end{equation*}
and the same arguments as above in the case of $V_n$ gives that the drift part of $\overline{V}_n$ is bounded from below by $c_3(n)$. Consequently, by the McKean's argument, we obtain that $T_0^{(n)}\wedge T_1^{(n)}=\infty$ a.s. Thus the measures $\mu_t^{(n)}$ are supported in $[0,1]$ and the result follows from Theorem \ref{thm:main} and Theorem \ref{thm:Jacobi:1}.
\end{proof}

\section{Free probability}
\label{sec:Free}

Let us consider a non-commutative $W^*$-probability space $(\mathcal{A},\E)$, that is, a Von Neumann operator algebra $\mathcal{A}$ with a faithful normal trace $\E$. We denote the operator norm by $\|X\|_{\mathcal{A}}$, and the $L^{2}$-norm by $\|X\|_2:=\sqrt{E(XX^*)}$.
\par The \textit{spectral distribution} of a self-adjoint operator $X\in\mathcal{A}$ is a probability measure $\mu$ on $\R$ such that
\[
\E(X^k)=\int_{\R}x^k\mu(dx).
\]
\par Let $\bar{A}_i$ denote an arbitrary element of an algebra $\mathcal{A}_i$. The sub-algebras $\mathcal{A}_1,\mathcal{A}_2,\dots,\mathcal{A}_n$ of $\mathcal{A}$ (and operators that generate them) are said to be \textit{freely independent} or \textit{free}, if the following condition holds:
\[
\E(\bar{A}_{i_1}\dots\bar{A}_{i_m})=0,
\] 
provided that $\E(\bar{A}_{i_s})=0$ and $i_{s+1}\not=i_s$ for every $s$. Two particular consequences are
\begin{itemize}
\item[(i)] $\E(AB)=\E(A)\E(B)$ if $A$ and $B$ are free.
\item[(ii)] 
\begin{equation}\label{freeness}
\E(AX_1AX_2)=\E(A^2)\E(X_1)\E(X_2),
\end{equation}
if $A$ is free from $X_1$ and $X_2$ and $\E(A)=0$.
\end{itemize}
The \textit{free Brownian motion}, is a family of operators $W:=\{W_t:t\geq0\}$ that satisfies the following properties:
\begin{itemize}
\item[(i)] $W_0=0$.
\item[(ii)] The increments of $W$ are free; i.e. $W_t-W_s$ is free from the subalgebra $\mathcal{A}_s$ which is generated by all $W_r$ with $r\leq s$.
\item[(iii)] The spectral distribution of $W_t-W_s$ is semicircle with zero expectation and variance $t-s$. 
\end{itemize}
For an adapted biprocess $N=a\otimes b:\R_+\to\mathcal{A}\otimes\mathcal{A}$ we will consider integrals with respect to a free Brownian motion of the form
\[
\int_0^{\infty}a_t(dW_t)b_t.
\]
Which satisfy Burkholder-Gundy type inequalities in the operator norms
\begin{equation}
\label{L_2norm}
\left\|\int_0^{\infty}a_t(dW_t)b_t\right\|_{\mathcal{A}}\leq 2\sqrt{2}\left(\int_0^{\infty}\|N_s\|^2_{\mathcal{A}\otimes\mathcal{A}}ds\right)^{1/2}.
\end{equation}
For more details on free stochastic calculus we refer to \cite{bib:BianeSpiecher1998}.

Following Capitaine and Donati-Martin \cite{bib:CapitaineDonatiMartin2005} we define a complex free Brownian motion $Z$. To this end we will consider $Z:=(U,V)$ a 2-dimensional $(\mathcal{A}_t)$-free Brownian motion in $(\mathcal{A},E)$. And we define the complex free Brownian motion as
\[
Z:=\frac{(X+iY)}{\sqrt{2}}.
\]
We will consider of the integral with respect to the complex free Brownian motion $Z$, so for any  adapted biprocess $N=a\otimes b:\R_+\to\mathcal{A}\otimes\mathcal{A}$ we define
\begin{equation}\label{def_csi}
\int_0^{\infty}a_t(dZ_t)b_t:=\frac{1}{\sqrt{2}}\left[\int_0^{\infty}a_t(dU_t)b_t+i\int_0^{\infty}a_t(dV_t)b_t\right].
\end{equation}
Following \cite{bib:CapitaineDonatiMartin2005}, the integral with respect to complex free Brownian motion satisfies for any adapted processes $a_t,b_t$ and $c_t$ the following properties:
\begin{itemize}
\item[(i)] \begin{equation}\label{prop0}\left(\int_0^{\infty}a_t dZ_tb_t\right)^*=\int_0^{\infty}b^*_t dZ^*_ta^*_t.\end{equation}
\item[(ii)] \begin{equation}\label{prop1}\E\left(\int_0^{\infty}a_t dZ_tb_t\int_0^{\infty}c_t dZ_td_t\right)=0.\end{equation}
\item[(iii)] \begin{equation}\label{prop2}\E\left(\int_0^{\infty}a_t dZ_tb_t\int_0^{\infty}c_t dZ^*_td_t\right)=\int_0^{\infty}\E(a_td_t)\E(b_tc_t)dt.\end{equation}
\end{itemize}
Now we will provide a version of Lemma 3.3 in \cite{bib:Kargin2011} for the case of complex free Brownian motion.
\begin{lemma}\label{lemma:freeintegral}
Let operators $H_1$, $H_2$, and $H_3$ belong to the subalgebra $\mathcal{A}_a$ which is generated by $Z_t$ for $t\leq a$. Then
\begin{itemize}
\item[(i)] 
\[
\E\left[\left(H_1\int_a^ba_t(dZ_t)b_t\right)H_2\left(\int_a^bc_t(dZ_t)d_t\right)H_3\right]=0.
\]
\item[(ii)]
\[
\E\left[H_1\left(\int_a^ba_t(dZ_t)b_t\right)H_2\left(\int_a^bc_t(dZ^*_t)d_t\right)H_3\right]=\int_a^b\E(b_tH_2c_t)\E(d_tH_3H_1a_t)dt.
\]
\end{itemize}
\end{lemma}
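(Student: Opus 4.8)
The plan is to derive both identities directly from the already-established properties \eqref{prop0}, \eqref{prop1} and \eqref{prop2} of the free stochastic integral together with the traciality of $\E$, avoiding a fresh expansion over a partition. The one extra ingredient needed is the following absorption identity: for $H,K\in\A_a$ and an adapted biprocess $a\otimes b$ on $[a,b]$,
\[
H\left(\int_a^b a_t(dZ_t)b_t\right)K=\int_a^b (Ha_t)(dZ_t)(b_tK),
\]
where the biprocess $(Ha_t)\otimes(b_tK)$ is again adapted since $H,K\in\A_a\subseteq\A_t$ for $t\geq a$. First I would check this for simple biprocesses, where the integral is the finite sum $\sum_k a_{t_k}(Z_{t_{k+1}}-Z_{t_k})b_{t_k}$ and the identity is immediate; the general case then follows by approximating $a\otimes b$ by simple biprocesses and passing to the limit, using \eqref{L_2norm} for the continuity of the integral in the biprocess and the submultiplicativity $\|HX\|_{\A}\leq\|H\|_{\A}\,\|X\|_{\A}$ (which also shows that $(Ha_t)\otimes(b_tK)$ inherits the square-integrability of $a\otimes b$).

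Granting the absorption identity, part (i) becomes a one-line computation. Writing $P=\int_a^b a_t(dZ_t)b_t$ and $P'=\int_a^b c_t(dZ_t)d_t$, traciality gives $\E[H_1PH_2P'H_3]=\E[(H_3H_1)PH_2P']$, and absorbing $H_3H_1$ and $H_2$ into the first integral rewrites this as
\[
\E\left[\left(\int_a^b (H_3H_1a_t)(dZ_t)(b_tH_2)\right)\left(\int_a^b c_t(dZ_t)d_t\right)\right]=0
\]
by \eqref{prop1}. For part (ii) the same cyclic permutation and absorption turn the left-hand side into $\E\big[(\int_a^b (H_3H_1a_t)(dZ_t)(b_tH_2))(\int_a^b c_t(dZ_t^*)d_t)\big]$, which by \eqref{prop2} equals $\int_a^b \E(H_3H_1a_td_t)\,\E(b_tH_2c_t)\,dt$; a final use of traciality inside the trace, $\E(H_3H_1a_td_t)=\E(d_tH_3H_1a_t)$, gives exactly the asserted formula.

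The only genuinely technical point is the justification of the absorption identity for arbitrary, not necessarily simple, biprocesses: one must choose the approximating simple biprocesses so that all the limits involved --- of $\int a^{(m)}(dZ)b^{(m)}$, of $\int (Ha^{(m)})(dZ)(b^{(m)}K)$, and of $H(\int a^{(m)}(dZ)b^{(m)})K$ --- hold simultaneously in the operator (equivalently $L^2$) norm, after which the identity passes to the limit. Everything else is formal trace manipulation. If one wished to avoid quoting \eqref{prop1}--\eqref{prop2} as black boxes, the fallback would be the direct route of Kargin: expand both integrals over a common partition, write $Z=(U+iV)/\sqrt2$ for a $2$-dimensional free Brownian motion $(U,V)$, and on each diagonal block apply \eqref{freeness} to the centered increments of $U$ and of $V$ separately, using that the mixed expectations $\E(\Delta_kU\,R\,\Delta_kV\,S)$ vanish for $R,S\in\A_{t_k}$ and that off-diagonal blocks vanish because the later increment is centered and free from everything to its left. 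I expect the bookkeeping in this direct approach --- tracking that the $U$- and $V$-contributions cancel in case (i) but add in case (ii) --- to be the most laborious step, which is precisely why reducing to \eqref{prop1}--\eqref{prop2} is the cleaner path.
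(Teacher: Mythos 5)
Your proof is correct, but it takes a genuinely different route from the one the paper (following Kargin) indicates. The paper's proof is a direct expansion: write both stochastic integrals as limits of Riemann sums over a common partition, decompose $Z$ into its two real free Brownian components, and apply the second-moment freeness identity \eqref{freeness} to the centered increments block by block, tracking the cancellation (case (i)) or addition (case (ii)) of the two real contributions by hand. You instead treat the Ito-isometry properties \eqref{prop1}--\eqref{prop2} as black boxes and reduce the lemma to them by two formal moves: a cyclic permutation under the trace $\E$, and the absorption identity $H\left(\int_a^b a_t\,dZ_t\,b_t\right)K=\int_a^b (Ha_t)\,dZ_t\,(b_tK)$ for $H,K\in\A_a$, which is indeed the one step requiring an approximation argument (simple biprocesses, BDG/\eqref{L_2norm}, and continuity of left/right multiplication in the $\A\otimes\A$ norm). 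Your route is more modular and shorter once \eqref{prop1}--\eqref{prop2} are granted; the price is that it does not re-derive those isometry identities, whereas the direct expansion is self-contained and simultaneously re-proves them. One small point worth making explicit in a write-up: \eqref{prop1}--\eqref{prop2} are stated on $[0,\infty)$, so you should note that the biprocesses are extended by zero off $[a,b]$ before invoking them. Both approaches are valid; yours cleanly isolates the only analytic content (absorption) from the purely algebraic bookkeeping, and you correctly flag the remaining direct route as an alternative.
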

As in \cite{bib:Kargin2011} the result follows by writing the integral as the limit of sums and using identity \eqref{freeness} together with the free independence between $X_t$ and $Y_t$.
\par For a given function $g:\R\to\R$, we write $g(X)$ for the spectral action of $g$ on $X\in\mathcal{A}$. Let us consider a solution $X=\{X_t:t\geq0\}$ to the following free stochastic differential equation 
\begin{equation}
\label{freediffusion}
dX_t=g(x_t)dZ_th(X_t)+h(X_t)dZ^*_tg(X_t)+b(X_t)dt,\qquad\text{$X_0\in\mathcal{A}$,}
\end{equation}
where the functions $g,h,b:\R\to\R$ act spectrally on $X$ as described above, and $Z$ is a complex free Brownian motion. 

We say that a function $f:\R\to\C$ is locally operator-Lipschitz continuous, if it is measurable, locally bounded, and if for $K>0$ it exists $C_K$, such that
\[
\|f(X)-f(Y)\|_{\mathcal{A}}\leq C_K\|X-Y\|_{\mathcal{A}},
\]
for all self-adjoint operators $X$ and $Y$ with norm less than $K$.
By a slight modification of the proof of Theorem 3.1 in \cite{bib:Kargin2011} we have that if the functions $g,h$ and $b$ are locally operator-Lipschitz continuous and $\overline{X}$ is bounded in operator norm then there exists $t_0>0$ and a family operators $(X_t)_{t\geq0}$ defined for all $t\in[0,t_0)$ and bounded in operator norm, such that $X_0=\overline{X}$ and $X_t$ is a unique solution of \eqref{freediffusion} for $t<t_0$.
\par  Let us denote by $R_t$ the resolvent of $X_t$ given by
\begin{align}
R_t(z):=(X_t-z)^{-1},\qquad\text{$z\in\C^+$,}
\end{align}
and by $r_t$ to its expectation
\begin{align}
r_t(z):=\E[(X_t-z)^{-1}],\qquad\text{$z\in\C^+$.}
\end{align}
Now we will characterize the solution $X$ to \eqref{freediffusion} in terms of its Cauchy transform. To this end we provide the following version of Theorem 3.2 in \cite{bib:Kargin2011}. The proof of this result is quite similar to that in \cite{bib:Kargin2011}, but we include the proof for sake of completeness.
\begin{theorem}\label{lem:cauchu_fd}
Assume that  $g$, $h$ and $b$ are locally operator-Lipschitz continuous, and let $X$ be an operator bounded solution to \eqref{freediffusion} for all $t\in[0,t_0)$. Let $R_t$ and $r_t$ denote the resolvent of $X_t$  and the expectation of the resolvent, respectively, and let $g_t:=g(X_t)$, $h_t:=h(X_t)$, and $b_t:=b(X_t)$. Then, for all $t\in[0,t_0)$ and $z\in\C^+$,
\begin{equation}\label{cauchy_fd}
\frac{dr_t(z)}{dt}=-\E[b_t(R_t(z))^2]+\E[g^2_tR_t(z)]\E[h^2_t(R_t(z))^2]+\E[h^2_tR_t(z)]\E[g^2_t(R_t(z))^2].
\end{equation}
\end{theorem}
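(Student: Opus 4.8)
The plan is to derive \eqref{cauchy_fd} by running a free It\^o-type expansion of the resolvent $R_t(z)=(X_t-z)^{-1}$ and then applying the trace $\E$, in the spirit of the proof of Theorem 3.2 in \cite{bib:Kargin2011}. Fixing $z\in\C^+$ and $0\le s<t<t_0$ and writing $R_t$ for $R_t(z)$ and $r_t$ for $r_t(z)$, I would start from the resolvent identity $R_t-R_s=-R_t(X_t-X_s)R_s$ and iterate it twice to get
\[
R_t=R_s-R_s(X_t-X_s)R_s+R_s(X_t-X_s)R_s(X_t-X_s)R_s+E_{s,t},
\]
where $E_{s,t}=-R_t(X_t-X_s)R_s(X_t-X_s)R_s(X_t-X_s)R_s$. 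Substituting $X_t-X_s=\int_s^t g_u(dZ_u)h_u+\int_s^t h_u(dZ_u^*)g_u+\int_s^t b_u\,du$ and using the operator-norm bound \eqref{L_2norm}, the boundedness of $X_u$ on $[0,t_0)$, and the local boundedness of $g,h,b$, one checks that $\|X_t-X_s\|_{\mathcal A}=O((t-s)^{1/2})$ on compact subintervals of $[0,t_0)$; hence $\|E_{s,t}\|_{\mathcal A}=O((t-s)^{3/2})=o(t-s)$, and likewise the pieces of the quadratic term containing the drift integral once (order $(t-s)^{3/2}$) or twice (order $(t-s)^2$) are negligible for $\tfrac{d}{dt}r_t$.

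Next I would take $\E$ of the linear term $-\E[R_s(X_t-X_s)R_s]$: the two stochastic integrals drop out because the free integral against $Z$ or $Z^*$ has vanishing trace (its increments are free from, and centered over, $\mathcal A_s$, and $\E$ is a trace), so only the drift contributes, and by cyclicity of $\E$ together with continuity of $u\mapsto\E[b_uR_u^2]$ this yields the term $-\E[b_sR_s^2]$ of \eqref{cauchy_fd}.

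Then I would treat the quadratic term $\E[R_s(X_t-X_s)R_s(X_t-X_s)R_s]$. Replacing $X_t-X_s$ by its martingale part and expanding into four products, Lemma \ref{lemma:freeintegral}(i) kills the $dZ\cdots dZ$ and $dZ^*\cdots dZ^*$ combinations, while Lemma \ref{lemma:freeintegral}(ii) (used directly for the $dZ$-then-$dZ^*$ pattern and, for the $dZ^*$-then-$dZ$ pattern, after noting that $Z^*$ is again a complex free Brownian motion) gives, with all outer operators equal to $R_s$ and after cyclically rewriting $\E[g_uR_sg_u]=\E[g_u^2R_s]$, $\E[g_uR_s^2g_u]=\E[g_u^2R_s^2]$ and similarly for $h$,
\begin{align*}
\E\left[R_s\Big(\int_s^t g_u(dZ_u)h_u\Big)R_s\Big(\int_s^t h_u(dZ_u^*)g_u\Big)R_s\right]&=\int_s^t\E[h_u^2R_s]\,\E[g_u^2R_s^2]\,du,\\
\E\left[R_s\Big(\int_s^t h_u(dZ_u^*)g_u\Big)R_s\Big(\int_s^t g_u(dZ_u)h_u\Big)R_s\right]&=\int_s^t\E[g_u^2R_s]\,\E[h_u^2R_s^2]\,du.
\end{align*}
Dividing by $t-s$, letting $t\downarrow s$, and combining with the linear-term contribution reproduces \eqref{cauchy_fd}; running the same argument from the left and invoking continuity of the traces involved (from the operator-Lipschitz hypotheses and the boundedness of $X_t$) upgrades this to a $C^1$ statement on $[0,t_0)$.

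I expect the main difficulty to be bookkeeping rather than conceptual: one must check that $\E[E_{s,t}]$ and the mixed drift/martingale pieces are genuinely $o(t-s)$ — which rests on \eqref{L_2norm} together with the fact that odd products of free Brownian increments contribute traces of lower order — and that every application of Lemma \ref{lemma:freeintegral} is admissible, i.e. the operators sitting outside the stochastic integrals all lie in $\mathcal A_s$ and the integrands $g_u,h_u$ are adapted. Once these points are settled, \eqref{cauchy_fd} follows.
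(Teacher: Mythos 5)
Your proposal is correct and follows essentially the same route as the paper's proof (which itself adapts Kargin's Theorem 3.2 from real to complex free Brownian motion): expand via the resolvent identity, argue the cubic remainder and the drift-containing quadratic pieces are $o(\Delta t)$, kill the linear stochastic terms and the $dZ\,dZ$ / $dZ^*dZ^*$ quadratic terms, and evaluate the mixed $dZ\,dZ^*$ terms with Lemma \ref{lemma:freeintegral}(ii). The only cosmetic difference is that you iterate the resolvent identity three times to isolate the remainder $E_{s,t}$, whereas the paper iterates twice and separately shows that replacing $R_{t+\Delta t}$ by $R_t$ in the quadratic term costs only $o(\Delta t)$.
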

\begin{proof}
Following the proof of Theorem 3.2 in \cite{bib:Kargin2011} we will use the following notation:
\begin{align*}
A=\int_t^{\Delta t}b_sds,\qquad
B=\int_t^{\Delta t} g_s(dZ_s)h_s,\qquad\text{and }\qquad C=\int_t^{\Delta t} h_s(dZ^*_s)g_s.
\end{align*}

We note that using the fact that $g$, $h$ and $b$ are locally operator-Lipschitz continuous, and the fact $\sup_{0\leq t\leq t_0}\|X_t\|_{\mathcal{A}}<\infty$, together with \eqref{prop2} we obtain that
\begin{equation}\label{bounds}
\|A\|_{2}=O(\Delta t),\qquad\|B\|_{2}=O(\sqrt{\Delta t}),\text{and}\qquad \|C\|_{2}=O(\sqrt{\Delta t}).
\end{equation}
\par By using twice the resolvent identity we have
\begin{align}\label{resolvent1}
R_{t+\Delta t}-R_t&=-R_{t+\Delta t}(A+B+C)R_t\notag\\
&=-R_tAR_t-R_t(B+C)R_t+R_{t+\Delta t}(A+B+C)R_t(A+B+C)R_t.
\end{align}
And we note that
\begin{equation}\label{op:bound1}
\|R_{t+\Delta t}AR_tAR_t+R_{t+\Delta t}AR_t(B+C)R_t+R_{t+\Delta t}(B+C)R_tAR_t\|_{2}=o(\Delta t).
\end{equation}
Also using \eqref{bounds},
\[
\|R_{t+\Delta t}-R_t\|_{2}=O(\sqrt{\Delta t}),
\]
which gives
\begin{equation}\label{op:bound2}
\|R_{t+\Delta t}(B+C)R_t(B+C)R_t-R_{t}(B+C)R_t(B+C)R_t\|_{2}=o(\Delta t).
\end{equation}
Hence using \eqref{op:bound1} and \eqref{op:bound2} in \eqref{resolvent1} we get
\begin{equation}\label{resolvent2}
\E(R_{t+\Delta t}-R_t)=\E(-R_tAR_t-R_t(B+C)R_t+R_{t}(B+C)R_t(B+C)R_t).
\end{equation}
Now we note that
\begin{equation}
\label{op:bound3}
\int_t^{t+\Delta t}b_sds=b_t\Delta t+o(\Delta t),
\end{equation}
and 
\begin{equation}\label{op:bound4}
\E\left[\int_t^{t+\Delta t}G_tg_s(dZ_s)h_sG_t\right]=0,
\qquad \E\left[\int_t^{t+\Delta t}G_th_s(dZ^*_s)g_sG_t\right]=0.
\end{equation}
Finally by Lemma \ref{lemma:freeintegral} and the fact that $g$ and $h$ are locally operator-Lipschitz continuous we get that
\begin{align*}
\E\Big[R_t\left(\int_t^{\Delta t}g_s(dZ_s)h_s\right)&R_t\left(\int_t^{\Delta t}g_s(dZ_s)h_s\right)R_t\Big]\\&=\E\left[R_t\left(\int_t^{\Delta t}h_s(dZ^*_s)g_s\right)R_t\left(\int_t^{\Delta t}h_s(dZ^*_s)g_s\right)R_t\right]=0,
\end{align*}
and
\begin{align*}
\E\Big[R_t\left(\int_t^{\Delta t}g_s(dZ_s)h_s\right)R_t\left(\int_t^{\Delta t}h_s(dZ^*_s)g_s\right)R_t\Big]&=(\Delta_t)\E[h_tR_th_t]\E[g_tR_t^2g_t]+o(\Delta t),\\
\E\left[R_t\left(\int_t^{\Delta t}h_s(dZ^*_s)g_s\right)R_t\left(\int_t^{\Delta t}g_s(dZ_s)h_s\right)R_t\right]&=(\Delta_t)\E[g_tR_tg_t]\E[h_tR_t^2h_t]+o(\Delta t).
\end{align*}
These identities, together with the properties of the trace imply
\begin{align}
\label{op:bound6}
\E(R_t(B+C)R_t(B+C)R_t))=(\Delta t)\left(\E[h_tR_th_t]\E[g_tR_t^2g_t]+\E[g_tR_tg_t]\E[h_tR_t^2h_t]\right)+o(\Delta t).
\end{align}
Hence by using \eqref{op:bound3}, \eqref{op:bound4}, and \eqref{op:bound6} in \eqref{resolvent2} we get
\[
r_{t+\Delta t}-r_t=(\Delta t)\left(-\E[b_tR_t^2]+\E[g_t^2R_t]\E[h_t^2R_t^2]+\E[h_t^2R_t]\E[g_t^2R_t^2]\right)+o(\Delta t).
\]
Dividing by $\Delta t$ and taking $\Delta t\to0$ we obtain the result.
\end{proof}
\begin{remark}\label{rem:conv_free_diff}
For each $t<t_0$ let us denote by $\tilde{\mu}_t$ the spectral distribution of $X_t$, which is the unique solution to \eqref{freediffusion}. Then we can rewrite \eqref{cauchy_fd} in the following form
\begin{align}\label{cauchy_fd_2}
\frac{d}{dt}\int_{\R}\frac{1}{(x-z)}\tilde{\mu}_t(dx)&=-\int_{\R}\frac{b(x)}{(x-z)^2}\tilde{\mu}_t(dx)+\int_{\R}\frac{g^2(x)}{(x-z)}\tilde{\mu}_t(dx)\int_{\R}\frac{h^2(y)}{(y-z)^2}\tilde{\mu}_t(dy)
\notag\\&+\int_{\R}\frac{g^2(x)}{(x-z)^2}\tilde{\mu}_t(dx)\int_{\R}\frac{h^2(y)}{(y-z)}\tilde{\mu}_t(dy)
\end{align}
Therefore, if $\mu_0=\tilde{\mu}_0$ and there is a unique solution to \eqref{cauchy_fd_2}, then by \eqref{ct2} we have that $\tilde{\mu}_t=\mu_t$ for all $t<t_0$. Hence, under the conditions of Theorems \ref{thm:main} and \ref{lem:cauchu_fd}, the family of the measure-valued processes $\{(\mu_{t}^{(n)})_{t<t_0}:n\geq1\}$ converges to the law $(\tilde{\mu}_{t})_{t<t_0}$ of the free diffusion $X$ given by \eqref{freediffusion}.
\end{remark}
\subsection{Examples.}
\subsubsection{Free linear Brownian motion.}
Let su consider $X_t$ the solution to the following free stochastic differential equation
\begin{align}\label{free_LBM}
dX_t=\theta \mathbf{1} dt+\sigma dW_t,\qquad\text{$X_0=0$,}
\end{align}
where $\theta\in\R$, $\sigma>0$, $\mathbf{1}$ denotes the unit element in the algebra, and $W_t$ is a real free Brownian motion. This is a particular case of \eqref{freediffusion} with $g(x)=h(x)=\sigma/2$, and $b(x)=\theta$ for $x\in \R$. Then if we denote by $\mu_t$ the spectral distribution of the process $X_t$ for $t>0$, then by Remark \ref{rem:conv_free_diff} the Cauchy transform $r_t$ of $\mu_t$ is given as the solution of the following partial differential equation
\begin{align*}
\frac{dr_t(z)}{dt}&=\sigma^2r_t(z)\frac{dr_t(z)}{dz}-\theta \frac{dr_t(z)}{dz},\\
r_0(z)&=\frac{1}{z}, \qquad\text{$t>0, \ z\in\C^+.$}
\end{align*}
The uniqueness to the previous can be obtained by similar methods as those in Section 4 of \cite{bib:RogersShi:1993}, and can be solved by the method of characteristics and it is given by
\begin{align}\label{cauchy_trans_lBM}
r_t(z)=\frac{-(z-\theta t)+\sqrt{(z-\theta t)^2-4\sigma^2 t}}{2\sigma^2t},\text{$t>0, \ \Im(z)\not= 0.$}
\end{align}
For each $t>0$, \eqref{cauchy_trans_lBM} corresponds to the Cauchy transform of a semicircular distribution with mean $\theta t$ and variance $\sigma^2t$ and it is explicitly given by
\begin{align*}
\mu_t=\frac{1}{\pi \sigma t} \sqrt{2t-(x-\theta t)^2}\ind_{\{|x-\theta t|\leq \sqrt{2\sigma t}\}}dx.
\end{align*}
Hence, by an application of Theorem \ref{thm:main} together with Remark \ref{rem:conv_free_diff} we obtain:  
\begin{proposition}
	Let $g_n$, $h_n$ and $b_n$ be continuous functions and $(X_t^{(n)})$ be a solution to 
	$$
	dX_t^{(n)} = g_n(X_t^{(n)})dW_t^{(n)}h_n(X_t^{(n)})+h_n(X_t^{(n)})d(W_t^{(n)})^*g_n(X_t^{(n)}) +\frac{1}{n}b_n(X_t^{{n}})dt\/,\quad X_0^{(n)}\in\mathcal{H}_n^+\/,
	$$
	where $W_t^{(n)}= n^{-1/2}W_t$ with $W_t$ being a $n\times n$ complex matrix Brownian motion. Assume that \eqref{eq:growth} holds, $g_n^2(x)\to \sigma/\sqrt{2}$, $h_n^2(x)\to \sigma/\sqrt{2}$ and $b_n(x)/n\to \theta $, locally uniformly as $n\to \infty$. If we assume that $\mu_0^{(n)}\Rightarrow \delta_0$, then the sequence empirical measure-valued process
	$$
	\mu_t^{(n)}(dx) = \frac{1}{n}\sum_{i=1}^n \delta_{\lambda_i^{(n)}}(dx)
	$$
	tends in probability in the space $\mathcal{C}(\R_+,\textrm{Pr}(\R))$ to the spectral distribution $(\mu_t)_{t\geq0}$ of the free linear Brownian motion $X$ which is solution to \eqref{free_LBM}.
\end{proposition}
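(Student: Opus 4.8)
The plan is to deduce the Proposition directly from Theorem~\ref{thm:main}, the only real work being to verify that the limiting integral equation~\eqref{limit} has a unique solution, and then to recognize that solution as the spectral distribution of the free process by appealing to Remark~\ref{rem:conv_free_diff}. First I would record the data: the locally uniform limits of the coefficients are the constants $g^2\equiv\sigma/\sqrt2$, $h^2\equiv\sigma/\sqrt2$, $b\equiv\theta$, so that $G(x,y)=g^2(x)h^2(y)+g^2(y)h^2(x)\equiv\sigma^2$; the sub-linear growth bound~\eqref{eq:growth} is among the hypotheses, and $\mu_0^{(n)}\Rightarrow\delta_0$ (with~\eqref{eq:8momentbound} in force, which is automatic if, e.g., the supports of the $\mu_0^{(n)}$ are uniformly bounded). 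Thus we are in the setting of Theorem~\ref{thm:main}: the family $\{(\mu_t^{(n)})_{t\ge0}\}$ is tight, and every weak limit $(\mu_t)_{t\ge0}$ of a convergent subsequence is a continuous probability-measure-valued solution of~\eqref{limit} with $\mu_0=\delta_0$, $b\equiv\theta$, $G\equiv\sigma^2$.

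Next I would reduce uniqueness of~\eqref{limit} to uniqueness of an ODE for the Cauchy transform. Applying~\eqref{limit} to the real and imaginary parts of $f_z(x)=1/(x-z)\in\mathcal{C}_b^2(\R)$, $z\in\C^+$, is exactly the computation that produced~\eqref{ct2}; with the present constant coefficients it shows that $r_t(z):=\langle f_z,\mu_t\rangle$ is continuous in $t$, satisfies $r_0(z)=1/z$ (since $\mu_0=\delta_0$), and solves the quasilinear equation $\partial_t r_t=\sigma^2\,r_t\,\partial_z r_t-\theta\,\partial_z r_t$ — the same equation displayed just before~\eqref{cauchy_trans_lBM}. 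As noted there, this equation has a unique solution in the class of Cauchy transforms of probability measures (the method of characteristics together with the argument of Section~4 of~\cite{bib:RogersShi:1993}), namely $r_t$ given by~\eqref{cauchy_trans_lBM}, the Cauchy transform of the semicircular law with mean $\theta t$ and variance $\sigma^2 t$. Since a probability measure on $\R$ is determined by its Cauchy transform, any two subsequential limits of $(\mu_t^{(n)})$ coincide; hence~\eqref{limit} has a unique solution and, by the final assertion of Theorem~\ref{thm:main}, the whole sequence $(\mu_t^{(n)})$ converges in probability in $\mathcal{C}(\R_+,\mathrm{Pr}(\R))$ to it.

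To finish, I would invoke Remark~\ref{rem:conv_free_diff}: the free linear Brownian motion $X$ solving~\eqref{free_LBM} with $X_0=0$ exists for all $t\ge0$ (the coefficients are constant, hence trivially locally operator-Lipschitz, so one may take $t_0=\infty$), and by Theorem~\ref{lem:cauchu_fd} its spectral distribution $\tilde\mu_t$ satisfies~\eqref{cauchy_fd_2} with $\tilde\mu_0=\delta_0$, i.e. the very same equation with the same initial datum. By the uniqueness just established, $\tilde\mu_t=\mu_t$ for every $t\ge0$, so the limit of $(\mu_t^{(n)})$ is precisely the law of the free linear Brownian motion.

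The main obstacle is the uniqueness statement for the Burgers-type equation $\partial_t r=\sigma^2\,r\,\partial_z r-\theta\,\partial_z r$ within the relevant class: one must show that a solution analytic on $\C^+$, mapping $\C^+$ into the lower half-plane and satisfying $z\,r_t(z)\to1$ as $z\to\infty$, is unique. This is the input borrowed from~\cite{bib:RogersShi:1993}; modulo it, the argument is a routine assembly of Theorem~\ref{thm:main}, the Cauchy-transform identities~\eqref{ct2} and~\eqref{cauchy_fd_2}, and Remark~\ref{rem:conv_free_diff}.
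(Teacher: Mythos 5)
Your proposal is correct and follows exactly the route the paper intends: the paper's "proof" is the sentence "Hence, by an application of Theorem~\ref{thm:main} together with Remark~\ref{rem:conv_free_diff} we obtain," and you have simply unpacked what that means --- tightness and the limit equation from Theorem~\ref{thm:main}, reduction to the Burgers-type PDE for the Cauchy transform via~\eqref{ct2}, uniqueness borrowed from the Rogers--Shi argument, and identification with the free linear Brownian motion through Theorem~\ref{lem:cauchu_fd} and Remark~\ref{rem:conv_free_diff}. The only inaccuracies are sign-convention slips inherited from the paper (with the paper's $r_t(z)=\langle f_z,\mu_t\rangle=\int(x-z)^{-1}\mu_t(dx)$ one has $r_0(z)=-1/z$ and $r_t$ maps $\C^+$ into $\C^+$, not the lower half-plane), which do not affect the substance of the argument.
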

\subsubsection{Free Ornstein-Uhlenbeck process.}
Let us consider $X_t$ the solution to the following free stochastic differential equation
\begin{equation}\label{fou}
dX_t=\theta X_tdt+\sigma dW_t,\qquad\text{$X_0=0$,}
\end{equation}
where $\theta,\sigma\in\R$ and $W_t$ is real free Brownian motion. For each $t\geq0$ let us denote by $\mu_t$ the spectral distribution of $X_t$. This equation takes the form \eqref{freediffusion} with $g(x)=h(x)=\sigma/\sqrt{2}$, and $b(x)=\theta x$ for $x\in\R$.
Hence, by \eqref{cauchy_fd_2} the Cauchy transform $r_t$ of the law $\mu_t$ satisfies the following partial differential equation
\begin{align*}
\frac{dr_t(z)}{dt}&=-(\theta z-\sigma^2r_t(z))\frac{dr_t(z)}{dz}-\theta r_t(z),\\
r_0(z)&=\frac{1}{z}, \qquad\text{$t>0, \ z\in\C^+.$}
\end{align*}
The previous equation has a unique solution that can be found by the method of characteristics (see Section 3.3.1 in \cite{bib:Kargin2011}) and it is given by 
\begin{align*}
r_t(z)=\frac{\theta }{\sigma^2(e^{2\theta t}-1)}\sqrt{z^2-\frac{2\sigma^2(e^{2\theta t}-1)}{\theta}}-z,\qquad\text{$t>0, \ z\in\C^+.$}
\end{align*}
Therefore for each fixed $t>0$, $\mu_t$ has a semicircle distribution with radius given by 
\begin{align*}
\sqrt{\frac{2\sigma^2(e^{2\theta t}-1)}{\theta}}\qquad&\text{if $\theta\geq0$},\\
\sqrt{\frac{2\sigma^2(1-e^{-2|\theta| t})}{|\theta|}}\qquad&\text{if $\theta<0$}.
\end{align*}
Therefore, by Theorem \ref{thm:main} together with Remark \ref{rem:conv_free_diff} we have the following result:
\begin{proposition}
Let $g_n$, $h_n$ and $b_n$ be continuous functions and $(X_t^{(n)})$ be a solution to 
$$
dX_t^{(n)} = g_n(X_t^{(n)})dW_t^{(n)}h_n(X_t^{(n)})+h_n(X_t^{(n)})d(W_t^{(n)})^*g_n(X_t^{(n)}) +\frac{1}{n}b_n(X_t^{{n}})dt\/,\quad X_0^{(n)}\in\mathcal{H}_n^+\/,
$$
where $W_t^{(n)}= n^{-1/2}W_t$ with $W_t$ being a $n\times n$ complex matrix Brownian motion. Assume that \eqref{eq:growth} holds, $g_n^2(x)\to \sigma/\sqrt{2}$, $h_n^2(x)\to \sigma/\sqrt{2}$ and $b_n(x)/n\to \theta x$, locally uniformly as $n\to \infty$. If we assume that $\mu_0^{(n)}\Rightarrow \delta_0$, then the sequence empirical measure-valued process
$$
\mu_t^{(n)}(dx) = \frac{1}{n}\sum_{i=1}^n \delta_{\lambda_i^{(n)}}(dx)
$$
tends in probability in the space $\mathcal{C}(\R_+,\textrm{Pr}(\R))$ to the spectral distribution $(\mu_t)_{t\geq0}$ of the free Ornstein-Uhlenbeck process $X$ which is solution to \eqref{fou}.
\end{proposition}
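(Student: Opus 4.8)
The plan is to reduce the statement to Remark~\ref{rem:conv_free_diff}, so that it amounts to three verifications: that \eqref{fou} admits a solution to which Theorem~\ref{lem:cauchu_fd} applies, that the matrix-flow hypotheses of Theorem~\ref{thm:main} hold, and that the resulting Cauchy-transform PDE has a unique solution. First I would record that \eqref{fou} is the instance of \eqref{freediffusion} with $g(x)=h(x)=\sigma/\sqrt2$ and $b(x)=\theta x$; these are locally operator-Lipschitz continuous (two constants and a linear function), so the existence result quoted after \eqref{freediffusion} yields a unique operator-bounded solution. The explicit process $X_t=\sigma\int_0^t e^{\theta(t-s)}\,dW_s$ solves \eqref{fou} and is semicircular, hence operator-bounded on every finite time interval, so one may take $t_0$ arbitrarily large and Remark~\ref{rem:conv_free_diff} will apply on all of $\R_+$. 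Writing $\tilde\mu_t$ for the spectral law of $X_t$, Theorem~\ref{lem:cauchu_fd} and Remark~\ref{rem:conv_free_diff} give that its Cauchy transform $r_t$ solves \eqref{cauchy_fd_2}; specializing the coefficients and splitting $x=(x-z)+z$ in the drift integral, this collapses to the first-order quasilinear PDE $\partial_t r_t(z)=-(\theta z-\sigma^2 r_t(z))\,\partial_z r_t(z)-\theta\, r_t(z)$ with $r_0(z)=1/z$ displayed before the Proposition.

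On the matrix side, the assumptions $g_n^2\to\sigma/\sqrt2$, $h_n^2\to\sigma/\sqrt2$, $b_n/n\to\theta x$ locally uniformly, together with \eqref{eq:growth} and the standing moment bound \eqref{eq:8momentbound} (which holds, e.g., when the $\mu_0^{(n)}$ have uniformly bounded supports), place us in the setting of Theorem~\ref{thm:main}. Hence $\{(\mu_t^{(n)})\}$ is tight, and every weak subsequential limit $(\nu_t)$ solves \eqref{limit} with $G(x,y)\equiv\sigma^2$, $b(x)=\theta x$ and $\nu_0=\delta_0$. Testing \eqref{limit} against $f_z(x)=1/(x-z)$ and using \eqref{ct2} from Section~\ref{sec:moments}, the Cauchy transform of $\nu_t$ is seen to solve exactly the same PDE with the same initial datum $r_0(z)=1/z$.

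It then remains to prove uniqueness for that PDE within the class of Cauchy transforms of probability measures --- functions analytic on $\C^+$, with negative imaginary part there and $z\,r_t(z)\to-1$ as $z\to\infty$. I would argue by the method of characteristics, as in Section~3.3.1 of \cite{bib:Kargin2011} (and as in Section~4 of \cite{bib:RogersShi:1993} for the purely linear case): the characteristic curves integrate explicitly, the analyticity and normalisation constraints single out the relevant branch of the resulting algebraic relation, and one recovers the explicit $r_t$ displayed above, i.e.\ the Cauchy transform of the centred semicircle law of radius $\sqrt{2\sigma^2(e^{2\theta t}-1)/\theta}$ (respectively $\sqrt{2\sigma^2(1-e^{-2|\theta| t})/|\theta|}$ when $\theta<0$). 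Since a Cauchy transform determines its measure, $\nu_t=\tilde\mu_t=\mu_t$ for every $t$; as all subsequential limits coincide, Theorem~\ref{thm:main} upgrades tightness to convergence in probability of $(\mu_t^{(n)})$ to $(\mu_t)_{t\geq0}$ in $\mathcal{C}(\R_+,\textrm{Pr}(\R))$. The only genuinely delicate point is this PDE-uniqueness step --- justifying the method of characteristics and the branch selection --- since everything else follows by directly quoting Theorems~\ref{thm:main} and \ref{lem:cauchu_fd} and Remark~\ref{rem:conv_free_diff}.
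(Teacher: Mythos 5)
Your proposal follows essentially the paper's own route: identify \eqref{fou} as the case $g=h=\sigma/\sqrt2$, $b(x)=\theta x$ of \eqref{freediffusion}, derive the Cauchy-transform PDE from Theorem~\ref{lem:cauchu_fd}, cite the method of characteristics (Kargin, and Rogers--Shi for the linear case) for uniqueness, and then combine Theorem~\ref{thm:main} with Remark~\ref{rem:conv_free_diff}; you also correctly flag that \eqref{eq:8momentbound} is a standing hypothesis of Theorem~\ref{thm:main} that the statement leaves implicit. One small convention slip: with the paper's choice $f_z(x)=1/(x-z)$ the Cauchy transform has \emph{positive} imaginary part on $\C^+$ and $z\,r_t(z)\to -1$, not negative imaginary part as you wrote, though this does not affect the argument.
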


\end{document}